%&LaTeX
\documentclass[12pt]{article}

\usepackage{amssymb}
\usepackage{amstext}
\usepackage{amscd}
\usepackage[francais]{babel}

\newtheorem{teo}{Th\'eor\`eme}[section]
\newtheorem{prop}[teo]{Proposition}
\newtheorem{lem}[teo]{Lemme}
\newtheorem{cor}[teo]{Corollaire}
\newtheorem{conj}[teo]{Conjecture}

\newtheorem{defini}[teo]{D\'efinition}

\newtheorem{rem}[teo]{Remarque}

\newcommand{\GL}{{\rm GL}}

\newcommand{\Sh}{{\rm Sh}}
\newcommand{\GSp}{{\rm GSp}}

\newcommand{\Gal}{{\rm Gal}}
\newcommand{\Res}{{\rm Res}}

\newcommand{\Hom}{{\rm Hom}}

\newcommand{\Aut}{{\rm Aut}}

\newcommand{\Discr}{{\rm Discr}}

\newcommand{\FF}{{\mathbb F}}
\newcommand{\CC}{{\mathbb C}}
\newcommand{\RR}{{\mathbb R}}
\newcommand{\ZZ}{{\mathbb Z}}
\newcommand{\QQ}{{\mathbb Q}}
\newcommand{\NN}{{\mathbb N}}

\newcommand{\GG}{{\mathbb G}}
\newcommand{\SSS}{{\mathbb S}}
\newcommand{\AAA}{{\mathbb A}}

\newcommand{\lto}{\longrightarrow}

\def\Fp{\mathfrak{p}}

\def\Fg{\mathfrak{g}}
\def\Fh{\mathfrak{h}}

\newcommand{\cB}{{\cal B}}

\newcommand{\uT}{\underline{T}}

\newcommand{\ol}{\overline}

\newcommand{\oQ}{\overline{\QQ}}

\newcommand{\End}{{\rm End}}

\newenvironment{prf}[1]{\trivlist
\item[\hskip \labelsep{\it
#1.\hspace*{.3em}}]}{~\hspace{\fill}~$\square$\endtrivlist}
\newenvironment{proof}{\begin{prf}{\bf Preuve}}{\end{prf}}

\title{Nombre de classes des tores de multiplication complexe
 et bornes inf\'erieures pour les  orbites Galoisiennes de points sp\'eciaux.
\footnote{Ullmo : Universit\'e de Paris-Sud, Bat 425, 91405, Orsay 
Cedex France, e-mail : ullmo@math.u-psud.fr ; Yafaev : University 
College London, 
Department of Mathematics, 25 Gordon street, WC1H OAH, London, United 
Kingdom, e-mail : yafaev@math.ucl.ac.uk
2000 Mathematics Subject Classification : 11G18}
\footnote{To appear in Bulletin de la SMF}}
\author{ Emmanuel Ullmo \and Andrei Yafaev}

\date{}
%\addtolength{\textwidth}{100pt}
%\addtolength{\hoffset}{-35pt}
%\addtolength{\textheight}{50pt}
%\addtolength{\voffset}{-35pt}

\begin{document}
\maketitle

\tableofcontents
\bigskip

\bigskip

\section{Introduction}

Ce papier est motiv\'e par la conjecture d'Andr\'e-Oort dont voici l'\'enonc\'e.

\begin{conj}[Andr\'e-Oort]
Soit $S$ une variet\'e de Shimura et soit $\Sigma\subset S$ 
un ensemble de points  sp\'eciaux. Alors les composantes irr\'eductibles de l'adh\'erence
de Zariski de $\Sigma$ sont des sous-vari\'et\'es sp\'eciales de $S$.
\end{conj}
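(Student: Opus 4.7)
La strat\'egie que je suivrais est celle d\'evelopp\'ee par Edixhoven, puis poursuivie par Edixhoven--Yafaev, proc\'edant par r\'ecurrence sur $\dim(S)$. Soit $Z$ une composante irr\'eductible de l'adh\'erence de Zariski de $\Sigma$, et soit $S'$ la plus petite sous-vari\'et\'e sp\'eciale de $S$ la contenant. Quitte \`a remplacer $S$ par $S'$ et $\Sigma$ par $\Sigma\cap Z$, on peut supposer que $Z$ est Hodge-g\'en\'erique dans $S$, et il suffit alors de d\'emontrer que $Z=S$. Je traiterais s\'epar\'ement deux situations, selon que les tores de Mumford--Tate des points de $\Sigma$ ont des discriminants born\'es ou non.

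Si les discriminants sont non born\'es, la seconde \'etape consiste, pour chaque point sp\'ecial $s\in Z\cap\Sigma$ de discriminant assez grand, \`a exhiber une correspondance de Hecke $T_p$ (avec $p$ premier totalement d\'ecompos\'e dans le corps reflex du tore de Mumford--Tate $\MT(s)$) telle que $T_p(Z)$ et $Z$ partagent une composante irr\'eductible. C'est ici qu'interviennent de fa\c con d\'ecisive les minorations de $|\Gal(\oQ/E)\cdot s|$ en termes du discriminant de $\MT(s)$ qui sont l'objet du pr\'esent article : elles permettent, sous GRH pour les corps CM, d'assurer que l'orbite galoisienne de $s$ contient strictement plus que $\deg_{\cL}(Z)\cdot(p+1)^{\dim Z}$ points (o\`u $\cL$ est un fibr\'e en droites ample fix\'e sur $S$), donc que $Z\cap T_p(Z)$ ne peut \^etre une intersection propre. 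Un lemme de rigidit\'e de type Andr\'e--Edixhoven, combin\'e au th\'eor\`eme de densit\'e des points $T_p$-sp\'eciaux dans les sous-vari\'et\'es $T_p$-stables, entra\^\i ne alors que $Z$ est sp\'eciale, d'o\`u $Z=S$.

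Dans le cas d'une suite infinie de points sp\'eciaux \`a discriminants born\'es, les minorations galoisiennes ne suffisent plus, et il faut invoquer un argument d'\'equidistribution \`a la Clozel--Ullmo : les mesures canoniques port\'ees par les orbites galoisiennes des points sp\'eciaux convergent faiblement vers la mesure canonique d'une sous-vari\'et\'e sp\'eciale $S''\supset Z$, et la minimalit\'e de $S'$ impose $S''=S'=S$, donc $Z=S$.

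Le principal obstacle sera de rendre effectifs ces deux arguments simultan\'ement et de contr\^oler l'interaction entre l'hypoth\`ese GRH (indispensable aux minorations via le nombre de classes des tores CM \'etablies dans ce papier) et le caract\`ere inconditionnel souhait\'e de la conclusion. Le passage du cas des points sp\'eciaux (ce que couvrent les pr\'esents outils) aux sous-vari\'et\'es sp\'eciales de dimension positive dans l'\'enonc\'e d'Andr\'e--Oort requerra, en outre, un argument d'induction suppl\'ementaire, combinant rigidit\'e des sous-vari\'et\'es faiblement sp\'eciales et densit\'e des points CM dans celles-ci.
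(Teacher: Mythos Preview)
The statement you are attempting to prove is labeled in the paper as a \emph{Conjecture}, not a theorem; the paper does not contain a proof of it. The authors state explicitly in the introduction that the André--Oort conjecture was proved (under GRH) in \cite{UY} and \cite{KY}, and that the purpose of the present article is only to supply one of the key ingredients: sharp lower bounds for the size of Galois orbits of special points (Theorem~\ref{teo2.3}, Theorem~\ref{borne_torsion}, and the corollaries in Section~\ref{s5.2}). So there is no ``paper's own proof'' against which to compare your proposal.

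That said, your sketch is a reasonable summary of the Edixhoven--Yafaev--Klingler--Ullmo strategy as carried out in \cite{UY} and \cite{KY}, and you correctly identify where the results of this paper enter: the Galois lower bounds are used to force $Z\cap T_p(Z)$ to be improper. But be aware that your outline glosses over substantial difficulties that are the actual content of \cite{UY} and \cite{KY}: the reduction to the adjoint case, the precise formulation of the dichotomy (which is not simply ``bounded versus unbounded discriminants'' but involves a more refined invariant), the ergodic input needed in the bounded case, and the geometric/monodromy arguments required to pass from $T_p$-stability to speciality. None of that is in this paper, and your proposal should not give the impression that these steps are routine.
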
 

Cette conjecture a \'et\'e r\'ecemment d\'emontr\'ee par Klingler et les deux auteurs 
\cite{UY}, \cite{KY}
en admettant l'hypoth\`ese de Riemann g\'en\'eralis\'ee. La strategie consistait 
\`a combiner des m\'ethodes  galoisiennes et g\'eom\'etriques d'Edixhoven
avec des techniques ergodiques de Clozel-Ullmo.
Tr\`es r\'ecemment, Jonathan Pila a mis en place une strat\'egie faisant
intervenir des id\'ees issues de la logique  pour attaquer la conjecture
d'Andr\'e-Oort \cite{Pi1}, \cite{PW}. Cette nouvelle approche
 a d\'ej\`a permi  \cite{Pi2} de 
d\'emontrer la conjecture d'Andr\'e-Oort pour des produits de courbes modulaires de mani\`ere inconditionnelle.
Qu'on adopte la strat\'egie d'Edixhoven ou celle de Pila, un des ingr\'edients majeurs
 est une borne inf\'erieure
suffisamment forte pour la taille des orbites sous Galois des points sp\'eciaux des vari\'et\'es de Shimura.

Il est \`a noter que la strat\'egie de Pila n\'ecessite des meilleures bornes que celles
requises par la m\'ethode  d'Edixhoven. La minoration de la taille des orbites
sous Galois des points sp\'eciaux  obtenue dans \cite{UY} d\'epend de la validit\'e
de l'hypoth\`ese de Riemann g\'en\'eralis\'ee et est insuffisante pour les applications \`a la m\'ethode de Pila. Notons que l'on ne sait pas \`a ce jour
 que sur l'espace de module $\AAA_{g}$ des vari\'et\'es ab\'eliennes principalement
polaris\'ees de dimension $g\ge 4$ il n'y a qu'un nombre fini de points correspondants
\`a des vari\'et\'es ab\'eliennes \`a multiplication complexe d\'efinies sur des extensions
de $\QQ$ de degr\'e born\'e.

Le but principal de cet article est d'obtenir des minorations pour la taille des orbites sous Galois
de point sp\'eciaux utilisables dans la strat\'egie de Pila. Nous obtenons en toute g\'en\'eralit\'e
ces bornes sous l'hypoth\`ese de Riemann g\'en\'eralis\'ee et dans certains cas  de mani\`ere inconditionnelle. Notons aussi qu'il n'est pas \'evident de pr\'evoir exactement
le type de bornes n\'ecessaire pour la m\'ethode de Pila mais nous
pensons que celles que nous obtenons sont difficilement am\'eliorables qualitativement
et probablement  adapt\'ees aux applications en vue.

Un point sp\'ecial $x$ d'une vari\'et\'e de Shimura d\'efinit un tore alg\'ebrique
$T$  sur $\QQ$ et un corps de nombres $E=E(x,T)$, le corps reflex. 
Le corps $E$ est un corps CM.
On dispose alors d'un morphisme de tores alg\'ebriques sur $\QQ$
dit de r\'eciprocit\'e
$$
r=r_{x,T}:R_{E}:=\Res_{E/\QQ}\GG_{m,E}\rightarrow T.
$$

Soit $M$ un tore alg\'ebrique sur $\QQ$. Soit $K_M^m$ le sous-groupe compact ouvert maximal de $M(\AAA_f)$.
Le groupe de classes $h_{M}$ de $M$ est par d\'efinition
le groupe fini
$$
h_M = M(\QQ)\backslash M(\AAA_f) / K^m_M.
$$ 

Le morphisme de r\'eciprocit\'e $r$ induit au niveau des groupes de classes
une application $\overline{r}: h_{R_{E}}\rightarrow h_{T}$ et l'orbite sous Galois
du point sp\'ecial $x$ est minor\'ee par le cardinal de  l'image de $\overline{r}$.
 Minorer la taille de l'orbite sous Galois de $x$ revient donc \`a minorer
le cardinal de l'image de $\overline{r}$.

La strat\'egie suivie dans ce papier est d'abord de minorer la
taille du groupe de classes de $T$ puis de borner la taille
du conoyau de $\overline{r}$. Les bornes obtenues 
pour la taille de $h_{T}$ sont inconditionnelles et ont la forme voulue.
Nous pouvons minorer  le conoyau de $\overline{r}$
quand   le noyau de $r$ est connexe de mani\`ere inconditionelle
et obtenir les bornes voulues pour l'orbite de Galois de $x$ dans ce cas. 
Nous donnons des crit\`eres assurant la connexit\'e du noyau de  $r$
dans la section \ref{sec4}. Par exemple ce noyau est toujours connexe
si $x$ est un point sp\'ecial de $\AAA_{g}$ pour $g\leq 3$ ou si $x$ est ``Galois g\'en\'erique ''
pour $g$ arbitraire.

Quand le noyau de $r$
n'est pas connexe, l'estimation du conoyau de $\overline{r}$
semble \^etre un probl\`eme s\'erieux de g\'eom\'etrie alg\'ebrique
et de cohomologie galoisienne que nous n'avons pas su r\'esoudre
sans l'hypoth\`ese de Riemann g\'en\'eralis\'ee.

Pr\'ecisons un peu la nature des r\'esultats obtenus. 
Soit $M$ un tore alg\'ebrique sur $\QQ$ de dimension $d$.
Soit $L$ le corps de d\'ecomposition de $M$
et $D_L$ la valeur absolue de son discriminant.
Notre but est de donner une borne inf\'erieure pour $h_M$ en fonction de $D_L$.
Soit $X^*(M)$ le groupe de caract\`eres de $M$ et $\chi_M$ le caract\`ere de
la repr\'esentation d'Artin correspondante de $G=\Gal(L/\QQ)$.
On consid\`ere la fonction $L$ d'Artin associ\'ee que l'on d\'enote
$L(s,M)$ et $\rho_M$ son quasi-r\'esidu dont la d\'efinition est donn\'ee \`a la section \ref{s2.1.2}.

On  d\'efinit ensuite le \emph{quasi-discriminant} $D_M$ de $M$.
Il est d\'efini comme le rapport entre  deux mesures de Haar sur $M(\AAA)$.  La proposition \ref{p3.1} donne
une formule ferm\'ee qui relie $D_{M}$ au conducteur d'Artin $a(M)$ du module Galoisien
$X^*(M)\otimes \QQ$ et au cardinal du groupe des composantes du mod\`ele de N\'eron de 
type fini de $M$ sur $\ZZ$.
 Shyr \cite{Sh} montre la formule:
\begin{equation}\label{Shyr}
h_M R_M = w_M \tau_M \rho_M D_M^{1/2}
\end{equation}
o\`u $w_M$ est la taille du `groupe des unit\'es de $M$',  $R_M$ le r\'egulateur de $M$
et $\tau_M$ le nombre de Tamagawa.
Il est \`a noter que dans le cas du tore $M=\Res_{F/\QQ}\GG_{mF}$ o\`u $F$ est un corps de nombres,
on retrouve  la formule classique pour le nombre de classes de l'anneau des entiers de $F$ .

En explicitant et en \'evaluant les invariants arithm\'etiques de $M$
intervenant dans la formule de Shyr (\ref{Shyr}) on montre  que
\begin{equation}\label{eqintro}
h_M R_M \gg D_L^{\mu}
\end{equation}
o\`u les constantes ne d\'ependent que de $d$ et sont explicites en fonction de $d$.
La forme pr\'ecise du r\'esultat est donn\'ee dans le th\'eor\`eme \ref{teo2.3}.
Une fois la formule ferm\'ee pour $D_{M}$ obtenue les r\'esultats principaux
sont une minoration de la forme voulue pour le conducteur d'Artin $a(M)$ (proposition \ref{p3.2}) et une
estimation de type Brauer-Siegel pour le quasi-r\'esidu $\rho_{M}$ (proposition \ref{p2.1}).
Il est \`a noter que dans le cas o\`u $M$ est un tore de multiplication complexe $T$
 le r\'egulateur $R_T$ est trivial et nous obtenons une minoration de $h_{T}$.

On applique ensuite notre formule pour $h_T$ au probl\`eme de minoration des orbites
Galoisiennes des points sp\'eciaux dans les vari\'et\'es de Shimura.

Soit $(G,X)$ une donn\'ee de Shimura, $K$ un sous-groupe
compact ouvert de $G(\AAA_{f})$ et $\Sh_{K}(G,X):=G(\QQ)\backslash (X\times G(\AAA_{f})/K)$
la vari\'et\'e de Shimura associ\'ee.
On peut sans perte de g\'en\'eralit\'e supposer que 
$G$ est semisimple de type adjoint.

Soit $x=\ol{(h,1)}$ un point sp\'ecial.
Alors le groupe de Mumford-Tate $T$ de $h$ est un tore de multiplication 
complexe.
Soit
$K_T$ le sous-groupe compact ouvert $T(\AAA_f)\cap K$ de $T(\AAA_f)$.
On dispose donc d'un morphisme de r\'eciprocit\'e
$$
r\colon \Res_{E/\QQ}\GG_{m,E} \lto T
$$
o\`u comme pr\'ec\'edemment $E$ d\'esigne le corps reflex de $(T,\{x\})$.
L'orbite Galoisienne  $O(x)$ de $x=\ol{(h,1)}$ a pour taille  le cardinal de l'image
de $r((E\otimes \AAA_f)^*)$ dans $T(\QQ)\backslash T(\AAA_f) /K_T$.
On d\'emontre alors dans la section \ref{s5.1} que
$$
\vert O(x) \vert \gg B^{i(T)}|K_T^m/K_T| \vert Im(\overline{r})\vert
$$
o\`u $Im(\overline{r})$ d\'esigne l'image
de $r((E\otimes \AAA_f)^*)$ dans $T(\QQ)\backslash T(\AAA_f) / K_T^m$,
$B$ est une constante ne d\'ependant que de la vari\'et\'e de Shimura
$\Sh_{K}(G,X)$ et $i(T)$ est le cardinal  de l'ensemble des nombres premiers $p$
tels que la projection de $K_{T}$ dans $T(\QQ_{p})$ n'est pas \'egale 
\`a $K_{T,p}^{m}$. 

Quand le point sp\'ecial $x$ varie dans $\Sh_{K}(G,X)$ parmi
les points tels que $r_{x}$ est \`a noyau connexe, un r\'esultat
de Clozel et du premier auteur \cite{CU} assure que 
le conoyau de $\overline{r}$ est uniform\'ement born\'e.
On obtient dans ce cas en utilisant l'\'equation (\ref{eqintro}) une minoration satisfaisante
de $\vert O(x)\vert$ sous la forme
\begin{equation}\label{eqi1}
\vert O(x) \vert \gg B^{i(T)}|K_T^m/K_T \vert D_{L}^{\mu}
\end{equation}
pour un $\mu$ explicite. C'est par exemple
le cas pour pour un point  du module des vari\'et\'es ab\'eliennes principalement polaris\'ees
correspondant \`a une vari\'et\'e ab\'elienne simple de dimension $g$ pour $g\leq 3$ ou pour un point \`a multiplication complexe 
 ``g\'en\'eral du point de vue Galoisien''
pour $g$ arbitraire.
 Les r\'esultats principaux
que nous obtenons dans cette direction sont donn\'es dans
la section \ref{s5.2}.

Un argument simple montre que
 le nombre de composantes connexes du noyau de $r_{x}$
est uniform\'ement born\'e quand $x$ varie parmi les points sp\'eciaux de
$\Sh_{K}(G,X)$. On en d\'eduit que l'image de $\overline{r}$ contient
l'image de l'\'elevation \`a la puissance $n$ de $h_{T}$ dans $h_{T}$
pour $n$ uniform\'ement born\'e.
 
En utilisant  l'hypoth\`ese de Riemann g\'en\'eralis\'ee on montre dans la section \ref{s6}  une minoration
de l'orbite sous Galois de $x$ de la forme voulue
\begin{equation}\label{eqi2}
\vert O(x) \vert \gg B^{i(T)}|K_T^m/K_T|  D_{L}^{\mu}.
\end{equation}
Ce r\'esultat est ind\'ependant des parties pr\'ec\'edentes.

Finalement, il est \`a noter que Tsimerman (voir \cite{Tsi}) a obtenu des r\'esultats comparables aux notres simultan\'ement.

\section{Pr\'eliminaires}
\subsection{Formule de classes g\'en\'eralis\'ee.}\label{s2.1}

Nous rappelons dans cette partie une formule due \`a Ono \cite{On1} et \cite{On2}
et Shyr \cite{Sh} donnant le nombre de classes
d'un tore alg\'ebrique $T$ sur $\QQ$ qui g\'en\'eralise la formule classique de 
Dedekind pour le nombre de classes de l'anneau des entiers d'un corps de nombres.
On d\'efinit et on estime le quasi-r\'esidu $\rho_{T}$ de $T$
qui intervient dans cette formule. On \'enonce un des r\'esultats 
principaux que nous avons en vue qui donne une minoration du produit
 du nombre de classes de $h_{T}$ de $T$ par le r\'egulateur $R_{T}$
 (th\'eor\`eme \ref{teo2.3}). Dans les applications \`a la multiplication
 complexe que nous avons en vue le r\'egulateur $R_{T}$ sera toujours \'egal \`a
 $1$ de sorte que l'on aura dans ce cas une minoration du nombre
 de classes $h_{T}$.

\subsubsection{Nombre de classes des tores.}\label{s2.1.1}

On note $\AAA$ (resp.  $\AAA_{f}$) l'anneau  des ad\`eles  (resp. des ad\`eles finis) de $\QQ$. 
Soit $G$ un groupe alg\'ebrique sur $\QQ$. Soit $K$ un sous-groupe compact
ouvert de $G(\AAA_{f})$, le nombre de classe $h_{G}(K)$ de $G$ relativement 
\`a $K$ est d\'efini comme le cardinal
de l'ensemble fini $G(\QQ)\backslash G(\AAA_{f})/K$ (\cite{PR} thm. 5.1). 

Si $T$ est un tore sur $\QQ$, 
et $p$ est premier  on note $K_{T,p}^m$  
 l'unique  sous-groupe compact ouvert maximal de $T(\QQ_{p})$.
Alors  $K_{T}^m:=\prod_{p} K_{T,p}^m$  est l'unique  sous-groupe compact ouvert maximal de $T(\AAA_{f})$.
Le nombre de classe $h_{T}$ de $T$ est d\'efini comme
\begin{equation}
h_{T}:=h_{T}(K_{T}^m).
\end{equation}

On note $K_{T,\infty}^m$ le sous-groupe compact maximal de $T(\RR)$. Le groupe
$T(\QQ)\cap K_{T,\infty}^m K_{T}^m$ est alors fini et on note
\begin{equation}
w_{T}=\vert T(\QQ)\cap K_{T,\infty}^m K_{T}^m\vert.
\end{equation}

\subsubsection{Fonction $L$ d'Artin de $T$ et estimations de $\rho_{T}$.}\label{s2.1.2}

Soit $T$ un tore alg\'ebrique sur $\QQ$, on note $X^*(T)$ le $\ZZ$-module libre
$\Hom(T_{\oQ},\GG_{m,\oQ})$ des caract\`eres de $T$. Pour toute extension $E$ de $\QQ$
on note $X^*(T)_{E}$ le sous-module de $X^*(T)$ form\'e des caract\`eres qui sont rationnels
sur $E$. Soit $L$ un corps de d\'ecomposition de $T$ et $G=\Gal(L/\QQ)$ le groupe de Galois de
$L$ sur $\QQ$. Le groupe $G$ agit sur $X^*(T)$ et on note $\chi_{T}$
le caract\`ere de cette repr\'esentation.

On note $L(s,T)=L(s,\chi_{T})$ la fonction $L$ d'Artin d\'efini par le $G$-module $X^*(T)\otimes \CC$.
On dispose d'un produit Eul\'erien $L(s,T)=\prod_{p}L_{p}(s,T)$, avec pour tout nombre
premier $p$
$$
L_{p}(s,T)=\det (1-p^{-s}Frob_{\Fp}\vert X^*(T)^{I_{\Fp}})^{-1}.
$$

Dans cette somme portant sur les nombres premiers $p$, $\Fp$
d\'esigne une place arbitraire de $L$ au dessus de $p$, $I_{\Fp}\subset G_{\Fp}$
d\'esigne le sous-groupe d'inertie  du groupe de d\'ecomposition $G_{\Fp}=\Gal(L_{\Fp}/\QQ_{p})$
et $Frob_{\Fp}\in G_{\Fp}/I_{\Fp}$ est le Frobenius en $\Fp$.

Soit $h$ le nombre de classes de conjugaison de $G$ et $\chi_{1},\dots,\chi_{h}$
les caract\`eres des repr\'esentations irr\'eductibles de $G$. On suppose
que $\chi_{1}$ est le caract\`ere de la repr\'esentation triviale. Par la th\'eorie
des fonctions $L$ d'Artin \cite{Art1} \cite{Art2}, si on a la d\'ecomposition
$\chi_{T}=\sum_{i=1}^hm_{i}\chi_{i}$ alors
$$
L(s,T)=\zeta(s)^{m_{1}}\prod_{j=2}^h L(s,\chi_{j})^{m_{j}}
$$
avec $\zeta(s)$ la fonction zeta de Riemann et $L(s,\chi_{i})$
la fonction $L$ d'Artin de $\chi_{i}$. Dans cette situation
\begin{equation}\label{rhoT}
\rho_{T}=\lim_{s\rightarrow 1} (s-1)^{m_{1}}L(s,T)=\prod_{i=2}^h L(1,\chi_{j})^{m_{j}}
\end{equation}
est fini et non nul.

Soit $H$ un sous-groupe de $G$  et $\chi_{H}$ le caract\`ere d'une repr\'esentation
de $H$. On note $\chi_{H}^{*}$ le caract\`ere de $G$ induit de $\chi_{H}$.
Par d\'efinition, on a donc
$$
\chi_{H}^{*}(\alpha)=\frac{1}{\vert H\vert}\sum_{g\in G}\chi'_{H}(g\alpha g^{-1})
$$ 
$\chi'_{H}$ d\'esignant l'extension de $\chi_{H}$ \`a $G$ nulle en dehors de $H$.

Soit $H_{1},\dots, H_{r}$, un syst\`eme de repr\'esentants des sous-groupes 
cycliques de $G$ \`a conjugaison pr\`es. On note ${\bf 1}_{H_{i}}$ le caract\`ere 
de la repr\'esentation triviale  de $H_{i}$.
Par la th\'eorie d'Artin,  (\cite{On1} 1.5.3) il existe des entiers naturels
$(m,\lambda_{i},\nu_{i})$ premiers entre eux dans leur ensemble qui sont d\'etermin\'es par
le $G$-module $X^*(T)$ tels que 
\begin{equation}
m\chi_{T}+\sum_{i=1}^r \lambda_{i} {\bf 1}_{H_{i}}^{*}=\sum_{i=1}^r \nu_{i} {\bf 1}_{H_{i}}^{*}.
\end{equation}

Soit $F_{i}$ les sous-corps de $L$ correspondants \`a $H_{i}$ par la th\'eorie de
Galois on en d\'eduit une isog\'enie (\cite{On1} thm. 1.5.1)
\begin{equation}\label{eq2.1.2}
T^{m}\times \prod_{i=1}^{r} (\Res_{F_{i}/\QQ}\GG_{m})^{\lambda_{i}}\simeq
\prod_{i=1}^r (\Res_{F_{i}/\QQ}\GG_{m})^{\nu_{i}}.
\end{equation}

On on aura besoin de l'\'enonc\'e suivant de type Brauer-Siegel concernant
la taille de $\rho_{T}$.

\begin{prop}\label{p2.1}
Soit $d$ un entier et $\epsilon$ un r\'eel positif.
Soit $T$ un tore alg\'ebrique sur $\QQ$ de dimension $d$.
Soit $L$ le corps de d\'ecomposition de $T$ et $D_{L}$ la valeur absolue
du discriminant de $L$. Il existe des constantes $c_{1}=c_{1}(d,\epsilon)$ et $c_{2}=c_{2}(d,\epsilon)$ (d\'ependants
uniquement de $d$ et $\epsilon$) telles que 
\begin{equation}
c_{1}D_{L}^{-\epsilon}   \leq \rho_{T}\leq c_{2}D_{L}^{\epsilon}.
\end{equation}
\end{prop}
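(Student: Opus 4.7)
La stratégie consiste à ramener l'estimation de $\rho_T$ à celle des résidus des fonctions zêta de Dedekind des sous-corps $F_i$ de $L$. Puisque deux tores $\QQ$-isogènes ont des $G$-modules de caractères rationnellement isomorphes, leurs fonctions $L$ d'Artin coïncident. L'isogénie (\ref{eq2.1.2}) fournit donc l'égalité
\[
L(s,T)^m \prod_{i=1}^r \zeta_{F_i}(s)^{\lambda_i} = \prod_{i=1}^r \zeta_{F_i}(s)^{\nu_i},
\]
et, chaque $\zeta_{F_i}$ n'ayant qu'un pôle simple en $s=1$, le passage au quasi-résidu livre l'identité clé
\[
\rho_T^m = \prod_{i=1}^r \bigl(\mathrm{Res}_{s=1}\zeta_{F_i}(s)\bigr)^{\nu_i - \lambda_i}.
\]

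Je contrôlerai ensuite les données combinatoires uniquement en termes de $d$. Comme $T$ est de dimension $d$, le groupe $G = \Gal(L/\QQ)$ agit fidèlement sur $X^*(T)\cong \ZZ^d$ et s'identifie à un sous-groupe fini de $\GL_d(\ZZ)$. Par le théorème de Minkowski-Jordan, $\vert G\vert$ est borné en fonction de $d$ seulement, et il en va de même pour $[L:\QQ]$, pour $r$, et pour les entiers $m$, $\lambda_i$, $\nu_i$ qui sont entièrement déterminés par la structure de $G$-module de $X^*(T)$.

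Pour chaque $F_i\subset L$, la transitivité du discriminant dans une tour donne $D_{F_i} \le D_L$. Par ailleurs, pour les corps de nombres de degré borné en fonction de $d$, je dispose d'un encadrement
\[
c_1'(d,\epsilon')\, D_F^{-\epsilon'} \;\le\; \mathrm{Res}_{s=1}\zeta_F(s) \;\le\; c_2'(d,\epsilon')\, D_F^{\epsilon'}
\]
pour tout $\epsilon'>0$. La majoration provient d'estimations classiques à la Landau de $\zeta_F(s)$ au voisinage de $s=1$. La minoration est le théorème de Brauer-Siegel pour les corps de degré borné, via la formule analytique $\mathrm{Res}_{s=1}\zeta_F(s) = 2^{r_1}(2\pi)^{r_2}h_FR_F/(w_F D_F^{1/2})$ et l'estimation inconditionnelle $h_FR_F \gg_{d,\epsilon'} D_F^{1/2-\epsilon'}$. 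Combiné à $D_{F_i}\le D_L$, ceci donne le même encadrement avec $D_L$ à la place de $D_{F_i}$.

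On combine alors ces bornes : en appliquant la majoration aux facteurs $\bigl(\mathrm{Res}_{s=1}\zeta_{F_i}\bigr)^{\nu_i-\lambda_i}$ avec $\nu_i-\lambda_i>0$ et la minoration à ceux d'exposant négatif (symétriquement pour minorer $\rho_T$), on obtient $c_1'' D_L^{-N\epsilon'} \le \rho_T^m \le c_2'' D_L^{N\epsilon'}$ avec $N=\sum_i\vert\nu_i-\lambda_i\vert$ borné en fonction de $d$. En prenant la racine $m$-ième et en choisissant $\epsilon'=m\epsilon/N$, on conclut. L'obstacle essentiel est la minoration de $\mathrm{Res}_{s=1}\zeta_F(s)$ : l'application du théorème de Brauer-Siegel inconditionnel repose sur le théorème de Siegel concernant les zéros réels des fonctions $L$ de Dirichlet, d'où une constante $c_1(d,\epsilon)$ nécessairement ineffective --- ce qui n'a aucune incidence sur les applications visées.
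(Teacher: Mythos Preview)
Your proof is correct and follows essentially the same route as the paper: pass from the Artin isogeny (\ref{eq2.1.2}) to the identity $\rho_T^m=\prod_i \rho_{F_i}^{\nu_i-\lambda_i}$, bound the combinatorial data $(m,\lambda_i,\nu_i)$ uniformly in $d$, and conclude via Brauer--Siegel type bounds on the residues $\rho_{F_i}$. The only cosmetic difference is that you invoke Brauer--Siegel directly for the possibly non-Galois fields $F_i$ in terms of $D_{F_i}$ and then use $D_{F_i}\le D_L$, whereas the paper states and proves the intermediate lemma $c_1'D_L^{-\epsilon}\le \rho_E\le c_2'D_L^{\epsilon}$ for any subfield $E\subset L$ by combining Lang's comparison $\rho_E\gg \rho_L D_L^{-\epsilon/2}$ with the Galois-case bound $\rho_L\gg D_L^{-\epsilon/2}$ --- which is precisely how one proves the Brauer--Siegel lower bound for non-normal $F_i$ anyway.
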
 
{\it Preuve.}
On d\'eduit de l'\'equation (\ref{eq2.1.2}) l' \'egalit\'e de fonctions $L$
$$
L(s,T)^m\prod_{i=1}^r \zeta_{F_{i}}(s)^{\lambda_{i}}=
\prod_{i=1}^r \zeta_{F_{i}}(s)^{\nu_{i}}
$$
o\`u l'on  note  $\zeta_E (s)$ la fonction z\^{e}ta d'un corps de nombres $E$.
Pour tout $i\in \{1,\dots,r\}$, $\zeta_{F_{i}}(s)$ a un p\^{o}le simple
de  r\'esidu not\'e $\rho_{F_{i}}$.   Soit $m_{1}$ l'ordre du p\^{o}le en $s=1$ de 
$L(s,T)$. On trouve que
$$
mm_{1}=\sum_{i=1}^r (\nu_{i}-\lambda_{i})
$$
et 
$$
\rho_{T}^m=\prod_{i=1}^{r} \rho_{F_{i}}^{\nu_{i}-\lambda_{i}}.
$$

Quand $T$ varie parmi les tores de dimension $d$, il n'y a qu'un nombre fini
de possibilit\'es pour le groupe de Galois $G$ comme groupe abstrait.
Quand $G$ est fix\'e, il n'y a qu'un nombre fini de possibilit\'es \`a isomorphismes
pr\`es  pour $X^*(T)\otimes \QQ$ comme $G$-module. Comme les entiers
$m, \lambda_{i},\nu_{i}$ ne d\'ependent que du $G$-module $X^*(T)\otimes \QQ$,
ils sont born\'es quand $T$ parcourt l'ensemble des tores alg\'ebriques
sur $\QQ$ de dimension $d$.

On est donc ramen\'e au lemme  suivant.

\begin{lem} Soit $d$ un entier et $\epsilon$ un r\'eel positif.
 Soit $L$ une extension
galoisiennne de $\QQ$ de degr\'e $d$ et soit $E$ une sous-extension de $L$.
Il existe des constantes $c'_{1}=c'_{1}(d,\epsilon)$ et  $c'_{2}=c'_{2}(d,\epsilon)$
(ne d\'ependants que de $d$ et $\epsilon$) telles que
$$
c'_{1}D_{L}^{-\epsilon}   \leq \rho_{E}\leq c'_{2}D_{L}^{\epsilon}.
$$
\end{lem}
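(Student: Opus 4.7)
\emph{Plan de preuve.} L'argument cl\'e est le th\'eor\`eme de Brauer--Siegel appliqu\'e au corps de nombres $E$, dont le degr\'e est born\'e par $d$. Je commencerais par la formule de transitivit\'e du discriminant dans la tour $\QQ\subset E\subset L$, qui s'\'ecrit $D_{L}=N_{E/\QQ}(\Delta_{L/E})\cdot D_{E}^{[L:E]}$ et entra\^\i ne la divisibilit\'e $D_{E}^{[L:E]}\mid D_{L}$, en particulier $D_{E}\leq D_{L}$. Comme $[E:\QQ]\leq d$, les invariants archim\'ediens $r_{1}, r_{2}, w_{E}$ sont uniform\'ement born\'es en fonction de $d$. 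On peut donc utiliser la formule analytique du nombre de classes de Dedekind
$$
\rho_{E}=\frac{2^{r_{1}}(2\pi)^{r_{2}}h_{E}R_{E}}{w_{E}\sqrt{D_{E}}}
$$
pour transformer les bornes sur le produit $h_{E}R_{E}$ en bornes sur $\rho_{E}$.

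Pour la majoration, il suffit d'invoquer la borne classique $\rho_{E}\ll_{d,\epsilon}D_{E}^{\epsilon}$, qui se d\'eduit de l'\'equation fonctionnelle de $\zeta_{E}$, d'une application du principe de Phragm\'en--Lindel\"of dans la bande critique et de la formule de Stirling pour les facteurs gamma. Combin\'ee avec $D_{E}\leq D_{L}$, elle donne imm\'ediatement $\rho_{E}\leq c'_{2}(d,\epsilon) D_{L}^{\epsilon}$.

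Pour la minoration, j'utiliserais la forme suivante du th\'eor\`eme de Brauer--Siegel~: pour tout $d$ et tout $\epsilon>0$, il existe $c(d,\epsilon)>0$ tel que pour tout corps de nombres $E$ avec $[E:\QQ]\leq d$, on ait $h_{E}R_{E}\geq c(d,\epsilon) D_{E}^{1/2-\epsilon}$. Via la formule analytique, cela fournit $\rho_{E}\gg_{d,\epsilon}D_{E}^{-\epsilon}$. Puisque $D_{E}\leq D_{L}$ et $\epsilon>0$, on a $D_{E}^{-\epsilon}\geq D_{L}^{-\epsilon}$, d'o\`u la minoration voulue $\rho_{E}\geq c'_{1}(d,\epsilon) D_{L}^{-\epsilon}$.

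Le principal obstacle technique est le caract\`ere non effectif du th\'eor\`eme de Brauer--Siegel, li\'e aux \'eventuels z\'eros de Siegel de $\zeta_{E}$. N\'eanmoins, comme l'\'enonc\'e n'exige pas d'effectivit\'e dans les constantes $c'_{1}(d,\epsilon)$ et $c'_{2}(d,\epsilon)$, ce point ne constitue pas un v\'eritable obstacle~: la preuve se r\'eduit essentiellement \`a l'application uniforme des bornes classiques rappel\'ees ci-dessus sur la famille des extensions de $\QQ$ de degr\'e au plus $d$.
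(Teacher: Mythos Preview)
Your proof is correct. The upper bound is handled exactly as in the paper (Lang, XVI \S1, Lemme~1, together with $D_E^{[L:E]}\mid D_L$, hence $D_E\le D_L$). For the lower bound the two arguments differ slightly. You invoke $h_E R_E \gg_{d,\epsilon} D_E^{1/2-\epsilon}$ directly for $E$; since $E$ need not be normal over $\QQ$, this is really Stark's extension of Brauer--Siegel to fields of bounded degree (valid here because the Galois closure of $E$ is contained in $L$, hence of degree $\le d$). The paper instead keeps this reduction explicit: it first uses the comparison $\rho_E \gg \rho_L\, D_L^{-\epsilon/2}$ (Lang, XVI \S2, Th\'eor\`eme~2) and then applies the classical Brauer--Siegel bound $\rho_L \gg D_L^{-\epsilon/2}$ to the normal extension $L$ itself (Th\'eor\`eme~1). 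Your route is a bit more direct; the paper's has the virtue of making transparent that only the normal case of Brauer--Siegel is actually needed. Either way the argument goes through, and your remark on the ineffectivity of the constants is well placed.
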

D'apr\`es  (\cite{La} XVI-1) lemme 1, on sait que 
 $\rho_{E}\ll D_{E}^{\epsilon}$. On a par ailleurs la relation
$D_{L}=D_{E}^{[L:E]} N_{E/\QQ} (D_{L/E})$ o\`u
$D_{L/E}$ d\'esigne le discriminant relatif et $N_{E/\QQ}$
la norme de $E$ \`a $\QQ$. Ceci d\'emontre la majoration du 
lemme.

Pour la minoration, on utilise (\cite{La} XVI-2) th\'eor\`eme 2
qui assure que
 $$
 \rho_{E}\gg \rho_{L} D_{L}^{-\frac{\epsilon}{2}}
 $$
et (\cite{La} XVI-2) th\'eor\`eme 1 qui assure que
$$
\rho_{L}\gg D_{L}^{-\frac{\epsilon}{2}}.
$$

\subsubsection{Mesures de Haar sur $T(\AAA)$ et le quasi-discriminant $D_{T}$.}\label{s2.1.3}

Soit $T$ un tore alg\'ebrique  sur  $\QQ$ de rang $r$. Soit $v$ une place de $\QQ$
et $r_{v}$ le rang de $X^*(T)_{\QQ_{v}}$. Soit 
$\chi_{v,1},\dots, \chi_{v,r_{v}}$ une $\ZZ$-base de $X^*(T)_{\QQ_{v}}$ et 
$$
\pi_{v}: T(\QQ_{v})\rightarrow (\RR_{+}^{\times})^{r_{v}}
$$
$$
x\mapsto \pi_{v}(x)= (\vert \chi_{v,i}(x)\vert_{v})_{1\leq i\leq r_{v}}.
$$

Pour $v=\infty$, $\pi_{\infty}$ induit un isomorphisme
 $\overline{\pi_{\infty}}:T(\RR)/K_{T,\infty}^m\simeq ( \RR_{+}^{\times})^{r_{\infty}}$.
 On note 
 $$
 dt_{\infty}:=\overline{\pi_{\infty}}^* (\wedge_{i=1}^{r_{\infty}} \frac{dt_{i}}{t_{i}})
 $$
 et $\nu_{\infty}$ la mesure de Haar sur $T(\RR)$ amalgamant $dt_{\infty}$
 et la mesure de Haar normalis\'ee sur $K_{T,\infty}^m$.
 
 Pour $v=p$ fini,  $\pi_{p}$ induit un isomorphisme
 $\overline{\pi_{p}}:T(\QQ_{p})/K_{T,p}^m\simeq \ZZ^{r_{p}}$.
 On note $dt_{p}$ le pull-back  par $\overline{\pi_{p}}$ de la mesure
 discr\`ete sur  $\ZZ^{r_{p}}$ 
 et $\nu_{p}$ la mesure de Haar sur $T(\QQ_{p})$ amalgamant $dt_{p}$
 et la mesure de Haar normalis\'ee sur $K_{T,p}^m$.
On obtient ainsi une mesure de Haar
\begin{equation}
\nu_{T}:=\prod_{v\in \Sigma_{\QQ}} \nu_{v}
\end{equation}
sur $T(\AAA)$.

Soit $\omega$ une forme diff\'erentielle $\QQ$-rationelle non nulle de degr\'e maximal sur $T$.
Pour tout $v\in \Sigma_{\QQ}$, $\omega$ induit une mesure de Haar $\vert \omega_{v}\vert$
sur $T(\QQ_{v})$. On sait alors que 
\begin{equation}
\vert \omega_{T}\vert :=\vert \omega_{\infty}\vert 
\prod_{p\in \Sigma_{\QQ,f}} L_{p}(1,T) \vert \omega_{p}\vert
\end{equation} 
d\'efinit une mesure de Haar dite de Tamagawa sur  $T(\AAA)$ ind\'ependante du choix de $\omega$.

Il existe alors une constante positive $c_{T}$ telle que $\vert \omega_{T}\vert=c_{T}\nu_{T}$.
On appelle  alors le quasi-discriminant de $T$ le nombre  
\begin{equation}
D_{T}:=\frac{1}{c_{T}^2}.
\end{equation}
 
\subsubsection{Formule de classes d'un tore alg\'ebrique.}\label{s2.1.4}
 
 Soit $T$ un tore sur $\QQ$,  Shyr \cite{Sh} montre la formule de classes
 suivante:
 \begin{equation}\label{eqcl}
 h_{T}R_{T}=w_{T}\tau_{T} \rho_{T} D_{T}^{\frac{1}{2}}.
 \end{equation}
 
 Dans cette formule $R_{T}$ est le r\'egulateur de $T$ d\'efini comme 
 le covolume de l'image du  r\'eseau des unit\'es $T(\QQ)\cap K_{T}^m$
 dans $\RR^{r_{\infty}-r}$ (\cite{On1}, p. 131) et 
 $\tau_{T}$ est le nombre de Tamagawa de $T$ ( \cite{On1}, 3.5).
 
 Quand $T=\Res_{F/\QQ}\GG_{m,F}$ pour un corps de nombres $F$,
 on v\'erifie que $h_{T}=h_{F}$ est le nombre de classes de $F$,
 $R_{T}=R_{F}$ est le r\'egulateur de $F$, $w_{T}=w_{F}$ le nombre de
 racines de l'unit\'e de $F$, $\rho_{T}=\rho_{F}$ est le r\'esidu en $1$
 de la fonction z\^{e}ta du corps $F$. Soit $r$ (resp. $s$) le nombre de places r\'eelles
 (resp. complexes \`a conjugaison complexe pr\`es) de $F$ et $D_{F}$ la valeur absolue
 du  discriminant de $F$.
 Alors $D_{T}=\frac{D_{F}}{2^{2r}(2\pi)^{2s}}$. Comme dans cette situation
 $\tau_{T}=1$, on retrouve la formule classique de la th\'eorie des nombres:
 $$
  h_{F}R_{F}=2^{-r}(2\pi)^{-s}w_{F} \rho_{F} D_{F}^{\frac{1}{2}}.
 $$
 
\subsubsection{Minoration du nombre de classes d'un tore alg\'ebrique.}\label{s2.1.5}
  
  Nous pouvons maintenant \'enoncer un des r\'esultats principaux que nous avons
  en vue.
  
  \begin{teo}\label{teo2.3}
 Soit $d$ un entier positif. Il existe des constantes positives
 $\lambda(d)$ et $B(d)$ telles que pour tout tore alg\'ebrique $T$
 sur $\QQ$ de dimension $d$ et tout $\epsilon>0$, il existe une constante
 positive $c(d,\epsilon)$ telle que
 \begin{equation}
 h_{T}R_{T}\ge c(d,\epsilon) B(d)^{i(L)}D_{L}^{\frac{\lambda(d)}{2}-\epsilon}.
 \end{equation} 
 Dans cette \'equation $L$ d\'esigne le corps de d\'ecomposition de $T$,
 $D_{L}$ d\'esigne la valeur absolue du discriminant de $L$ et 
 $i(L)$ d\'esigne le nombre de nombres premiers divisant $D_{L}$.
  \end{teo}
  
  Si le r\'egulateur $R_{T}$ est trivial on obtient une minoration de $h_{T}$.
  On verra que c'est le cas pour les tores associ\'es \`a la multiplication complexe.
  La constante $\lambda(d)$ est explicit\'ee
  dans la d\'efinition \ref{d3.4}. Elle est facilement 
calculable pour $d$ fix\'e et on pourrait l'\'etudier quand $d$ varie.
Un calcul \`a la main donne par exemple
$\lambda(1)=1$, $\lambda(2)=\lambda(3)=\frac{2}{5}$, $\lambda(4)=\lambda(5)=\frac{4}{11}$
et $\lambda(6)=\lambda(7)=\frac{1}{5}$. Un test de parit\'e simple dans la d\'efinition de $\lambda(n)$
montre que pour tout $n\in \NN$,  $\lambda(2n)=\lambda(2n+1)$.

  La preuve du th\'eor\`eme
  sera donn\'ee dans la section \ref{s3.4} et sera une cons\'equence
  simple d'estimations sur les invariants $w_{T}, \tau_{T},\rho_{T}$ et $D_{T}$
  intervenant dans la formule de Shyr (\ref{eqcl}).
  
\subsection{Mod\`eles entiers des tores}\label{s2.2}
 
\subsubsection{Conducteurs d'Artin}
 
 Soit $p$ un nombre premier et
 $K$ une extension finie de $\QQ_{p}$. On note $O_{K}$ son anneau d'entiers,
 $m_{K}$ l'id\'eal maximal de $O_{K}$ et $\kappa_{K}=\FF_{q}$ son corps r\'esiduel. On note
 $O_{K}^{hs}$ le hens\'elis\'e stricte de  $O_{K}$ et $K^{hs}$ le corps des fractions
 de  $O_{K}^{hs}$. Soit $\pi$ une uniformisante de $O_{K}$, on normalise la valuation
 $v$ de $K$ de sorte que $v(\pi)=1$. La valeur absolue associ\'ee est alors
 $\vert \alpha\vert=q^{-v(\alpha)}$.
 
 Soit $L$ une extension finie galoisienne de $K$.
 On note $G=\Gal(L/K)$ le groupe de Galois de $L$ sur $K$.
 Soit 
 $$
\Delta_{-1}:= G\supset \Delta_{0}\supset \Delta_{1}\supset \dots 
 $$
 la filtration d\'ecroissante de ramification avec $\Delta_{0}=I$ le sous-groupe d'inertie
 et $\Delta_{1}$ le sous-groupe d'inertie sauvage. On note $g_{i}=\vert \Delta_{i}\vert$.
 
 Soit $V$ une repr\'esentation lin\'eaire complexe de dimension finie
 de $G$, le conducteur d'Artin $a(V)$ de $V$ est d\'efini (\cite{Se} VI-2)
 par
 \begin{equation}
 a(V):=\sum_{i\ge 0} \frac{g_{i}}{g_{0}} \dim(V/V^{\Delta_{i}}).
 \end{equation}
 Quand $V$ est mod\'er\'ement ramifi\'e 
 $a(V)=\dim(V/V^{I})$ et $a(V)=0$ quand $V$ est non ramifi\'e.  
 
\subsubsection{Mod\`eles entiers des tores}\label{s2.2.1}
 
 On fixe encore  un nombre premier $p$ et  une extension finie  $K$ de $\QQ_{p}$. 
 Soit $T$ un tore sur $K$ de dimension $d$. Il existe un mod\`ele $\uT$ de $T$ sur $O_{K}$
  de type fini  et lisse sur $O_{K}$ tel que $\uT(O_{K}^{hs})$ est le sous-groupe
  maximal born\'e de $T(K^{hs})$. On dira que $\uT$ est le mod\`ele
  de N\'eron-Raynaud de type fini de $T$. Dans cette situation $\uT(O_{K})$
  est le sous-groupe compact maximal de $T(K)$.
  
  Le mod\`ele de N\'eron-Raynaud $\uT^{NR}$ d\'efini dans \cite{BLR} est un mod\`ele
  lisse de $T$ sur $O_{K}$ tel que $\uT^{NR}(O_{K}^{hs})=T(K^{hs})$. Ce mod\`ele
  est localement de type fini sur $O_{K}$.

  Les composantes  de l'\'el\'ement neutre de $\uT$ et $\uT^{NR}$ 
  co\"\i ncident. On note $\uT^0$ la composante de l'\'el\'ement neutre
  de $\uT$ et $\phi(\uT):=\uT/\uT^0$ le groupe des composantes connexes
  de $\uT$. Alors $\phi(\uT)$ est un sch\'ema en groupe fini \'etale et est d\'etermin\'e
  par le $G$-module $\phi(\uT)(\oQ_{p})$.   
  
  Soit $L$ le corps de d\'ecomposition de $T$. Soit 
  $$
  R:=\Res_{L/K} T_{L}=\Res_{L/K}\GG_{m,L}^d
  $$
  alors
  $$
 \underline{R}=\Res_{O_{L}/O_{K}} \GG_{m,O_{L}}^d.
 $$
 
 Le tore $T$ se plonge canoniquement dans $R$.

Soit $T_{O_{K}}$ la fermeture sch\'ematique de $T$
dans $\underline{R}$, alors $\uT$ s'obtient \`a partir de 
$T_{O_{K}}$ par un proc\'ed\'e de lissage d\'ecrit dans
\cite{BLR}, voir aussi (\cite{CY} section 3). Si l'extension
$L$ de $K$ est mod\'er\'ement ramifi\'ee par le th\'eor\`eme 4.2
de \cite{Ed}, $T_{O_{K}}$ est lisse et  co\"\i ncide
donc avec $\uT$.
  
  On aura besoin du r\'esultat suivant
  dans la suite.

  \begin{prop}\label{prop2.4}
  Il existe une constante $\Phi=\Phi(d)$ (ne d\'ependant que de $d$)
  telle que pour tout tore $T$ de dimension
  au plus $d$ sur $K$
  \begin{equation}
\vert \phi(\uT)\vert \leq \Phi(d).
  \end{equation}
  \end{prop}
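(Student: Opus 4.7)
The plan is to bound $|\phi(\uT)|$ by combining Minkowski's classical finiteness theorem for finite subgroups of $\GL_{d}(\ZZ)$ with a functorial description of the component group in terms of the Galois module $X^{*}(T)$. I would proceed in two steps: first bound the order of the Galois group of the splitting field in terms of $d$ alone, then bound the component group in terms of that Galois group.

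First, I observe that the character module $X^{*}(T)$ is a free $\ZZ$-module of rank $d$, and the Galois group $G=\Gal(L/K)$ of the splitting field $L$ of $T$ acts faithfully on it. This yields an embedding $G\hookrightarrow \GL_{d}(\ZZ)$. By Minkowski's theorem bounding the order of finite subgroups of $\GL_{d}(\ZZ)$, there exists a constant $M(d)$, depending only on $d$, such that $|G|\leq M(d)$. In particular the inertia subgroup $I\subset G$ and the wild inertia $\Delta_{1}\subset I$ satisfy $|I|\leq M(d)$.

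Second, I would express the finite $G$-module $\phi(\uT)(\oQ_{p})$ as a subquotient of a group built functorially from the $I$-module $X^{*}(T)$ (or equivalently the cocharacter lattice $X_{*}(T)$). Following the smoothening construction of $\uT$ from $T_{O_{K}}$ recalled in \cite{BLR} and \cite{CY} section 3, the component group admits a combinatorial description in terms of cohomology of $I$ with coefficients in $X_{*}(T)$: it is a subquotient of a group of the form $X_{*}(T)_{I}$ or $H^{1}(I,X_{*}(T))$ twisted by the Frobenius action. Since any such cohomology group of a finite group $I$ acting on a free $\ZZ$-module of rank $d$ is annihilated by $|I|$ and is generated by at most $d$ elements, its order is bounded by $|I|^{d}\leq M(d)^{d}$. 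Setting $\Phi(d):=M(d)^{d}$ (or a similar explicit function of $d$) then yields the desired uniform bound.

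The main obstacle is making the second step rigorous in the wild case, where $L/K$ need not be tamely ramified and Edixhoven's theorem 4.2 does not directly identify $\uT$ with $T_{O_{K}}$. In that setting one must control the change in the component group along each dilation of the N\'eron-Raynaud smoothening process: at each step the modification is a finite subquotient of the cocharacter lattice as an $I$-module, and one argues by induction that the total modification has order bounded purely in terms of $|I|$ and $d$. Since the number of dilations needed is itself bounded in terms of the $G$-module structure on $X^{*}(T)$, of which there are only finitely many possibilities once $|G|\leq M(d)$ is fixed (Jordan-Zassenhaus), the combined bound depends only on $d$.
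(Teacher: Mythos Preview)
Your strategy shares the paper's core idea: both arguments rest on the faithful action of the inertia group $I$ on $X^{*}(T)\cong\ZZ^{d}$, the finiteness of conjugacy classes of finite subgroups of $\GL_{d}(\ZZ)$, and a cohomological description of $\phi(\uT)$. However, your second step is only a sketch, and the difficulty you flag in the wild case is genuine if one insists on tracking the smoothening process directly; the inductive control of dilations you propose is not carried out, and ``subquotient of $X_{*}(T)_{I}$ or $H^{1}(I,X_{*}(T))$'' is not a statement one can cite as is.

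The paper closes this gap with two clean reductions you omit. First, since formation of the finite-type N\'eron model commutes with unramified base change, one may pass to $K^{hs}$ and assume $G=I$. Second, one splits off the maximal split subtorus: the exact sequence $1\to T_{s}\to T\to T_{a}\to 1$ induces, by \cite{CY} lemme 11.2, an exact sequence of finite-type N\'eron models over $O_{K}$, and since $\underline{T_{s}}\cong\GG_{m,O_{K}}^{d'}$ is already connected, the snake lemma yields $\phi(\uT)=\phi(\underline{T_{a}})$. One is thus reduced to $T$ anisotropic over a strictly henselian base, i.e.\ $X^{*}(T)^{I}=0$. In that situation Xarles (\cite{Xa}, cor.~2.19) gives the exact identity
\[
\vert\phi(\uT)\vert=\vert H^{1}(I,X^{*}(T))\vert,
\]
valid uniformly in the tame and wild cases. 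The bound then follows exactly as you say.

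So the missing ingredient is the reduction to the anisotropic case together with Xarles's formula; with these in hand your speculative analysis of the smoothening and the separate appeal to Jordan--Zassenhaus become unnecessary.
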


Comme le mod\`ele de N\'eron-Raynaud (de type fini ou non) commute au changement de base
non ramifi\'e, on peut pour calculer  $\vert \phi(\uT)\vert $ supposer que $K=K^{hs}$. Si $L$ est le corps de d\'ecomposition 
de $T$, alors $G=\Gal(L/K)=I$ est le groupe d'inertie.

Soit $T_{s}$ le sous-tore d\'eploy\'e maximal de $T$
et $T_{a}$ le tore  quotient anisotrope $T/T_{s}$. D'apr\`es
\cite{CY} lemme 11.2, en passant aux mod\`eles de N\'eron-Raynaud
de type fini sur $O_{K}$, on obtient une suite exacte courte
$$
1\rightarrow\underline{T_{s}}\rightarrow \uT\rightarrow \underline{T_{a}}\rightarrow 0.
$$

\begin{lem}
On a l'\'egalit\'e $\phi(\uT)=\phi(\uT_{a})$.
\end{lem}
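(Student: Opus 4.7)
Le plan est d'analyser la suite exacte courte donn\'ee de sch\'emas en groupes lisses sur $O_K$ sur sa fibre sp\'eciale g\'eom\'etrique. Comme indiqu\'e ci-dessus, $\phi(\uT)$, $\phi(\uT_s)$ et $\phi(\uT_a)$ ne d\'ependent que des fibres sp\'eciales apr\`es changement de base \`a $K^{hs}$, on peut donc supposer $K=K^{hs}$. L'observation structurelle cl\'e est que par d\'efinition $T_s$ est un tore d\'eploy\'e, donc son mod\`ele de N\'eron-Raynaud de type fini $\uT_s$ est le tore d\'eploy\'e $\GG_{m,O_K}^{\dim T_s}$ et est par cons\'equent connexe. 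En particulier $\phi(\uT_s)=0$ et $\uT_s\subset \uT^0$.

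En appliquant le foncteur $\pi_0$ \`a la suite exacte sur la fibre sp\'eciale g\'eom\'etrique, on obtient une suite exacte \`a droite
$$\phi(\uT_s)\lto \phi(\uT)\lto \phi(\uT_a)\lto 0,$$
puisque $\pi_0$ est exact \`a droite sur les suites exactes courtes de groupes alg\'ebriques lisses. Comme le premier terme s'annule, la fl\`eche $\phi(\uT)\to \phi(\uT_a)$ est surjective.

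Pour l'injectivit\'e, il s'agit de montrer que le passage aux composantes neutres fournit la suite exacte courte
$$1\lto \uT_s\lto \uT^0\lto \uT_a^0\lto 1.$$
Le noyau de $\uT^0\to \uT_a^0$ est $\uT_s\cap \uT^0=\uT_s$ par connexit\'e de $\uT_s$. Pour la surjectivit\'e sur la fibre sp\'eciale g\'eom\'etrique, un argument de dimension suffit: l'image $\uT^0/\uT_s$ est un sous-groupe ferm\'e lisse connexe de $\uT_a^0$, et les deux sont irr\'eductibles de dimension $\dim T_a$, d'o\`u l'\'egalit\'e. Le lemme du serpent appliqu\'e \`a cette suite et \`a la suite donn\'ee fournit alors l'injectivit\'e de $\phi(\uT)\to \phi(\uT_a)$.

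Le point d\'elicat est la surjectivit\'e de $\uT^0\to \uT_a^0$, \`a savoir la compatibilit\'e de l'op\'eration ``composante neutre'' avec le quotient $\uT\to \uT_a$. Une fois cette comparaison de dimensions \'etablie, l'\'egalit\'e $\phi(\uT)=\phi(\uT_a)$ en d\'ecoule formellement.
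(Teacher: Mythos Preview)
Your proof is correct and follows essentially the same approach as the paper: observe that $\uT_s=\GG_{m,O_K}^{\dim T_s}$ is connected, deduce the short exact sequence $1\to\uT_s\to\uT^0\to\uT_a^0\to 1$ on identity components, and conclude by the snake lemma. In fact you are more careful than the paper, which simply asserts the exact sequence on identity components without justifying the surjectivity of $\uT^0\to\uT_a^0$ that you single out and verify by a dimension argument.
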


{\it Preuve}. Comme $\uT_{s}=\GG_{m,O_{K}}^{d'}$ pour un certain $d'\leq d$,
$\uT_{s}$ est connexe et la suite exacte pr\'ec\'edente
induit une suite exacte courte 
$$
1\rightarrow\underline{T_{s}}^0\rightarrow \uT^0\rightarrow \underline{T_{a}}^0\rightarrow 0.
$$
Le lemme s'obtient alors par une application du lemme du serpent.

On peut donc supposer que $T$ est anisotrope sur $K$.
Alors  $X^*(T)^{I}=0$ et par \cite{Xa}  cor. 2.19 on en d\'eduit que $\vert \phi(\uT)\vert= \vert H^{1}(I,X^*(T))\vert$. Comme $T$ est de dimension $d_{1}\leq d$, et $L$ est le corps de d\'ecomposition
de $K$, $I=\Gal(L/K)$ agit fid\`element sur $X^*(T)=\ZZ^{d_{1}}$. 
Le nombre de sous-groupes finis de $\GL_{d_{1},\ZZ}$ \`a conjugaison pr\`es
est fini. Il existe donc qu'un nombre fini de possibilit\'es pour  $\vert H^{1}(I,X^*(T))\vert$.
Ceci termine la preuve de la proposition \ref{prop2.4}.

\subsubsection{Mesure de Haar et conducteur d'Artin}\label{s2.2.2}
 
 On garde les notations de la section pr\'ec\'edente.
 On note $\omega(\uT)$ la puissance ext\'erieure maximale de
 l'espace des $1$-formes diff\'erentielles sur $\uT$. C'est 
 un $O_{K}$-module libre de rang $1$ et on choisit un
 g\'en\'erateur ${\omega}$ de $\omega(\uT)$. 
La mesure de Haar  $\vert {\omega}\vert $ 
 associ\'ee sur $T(K)$ est ind\'ependante du choix de $\omega$.  
 
 Fixons un isomorphisme $\phi: T_{L}\rightarrow \GG_{m,L}^d$.
 Le mod\`ele de N\'eron-Raynaud de type fini
 de $\GG_{m,L}^d$ est $\GG_{m,O_{L}}^d$.
 Soit ${\omega_{0}}$ un g\'en\'erateur de $\phi^*(\omega(\GG_{m,O_{L}}^d))$
 et $\vert \omega_{0}\vert$ la mesure de Haar sur $T(L)$ associ\'ee.
 
 Soit $a(T)$  le conducteur d'Artin de la repr\'esentation
 $X^*(T)\otimes \QQ$ de $\Gal (L/K)$.
 Gross et Gan (\cite{GG} sections 4-5 )  montrent qu'il
 existe une classe $\Theta\in O_{K}/O_{K}^{\times 2}$  telle que 
 $$
 v(\Theta)=a(T)
 $$
 et telle que $\frac{{\omega_{0}}}{\sqrt{\Theta}}$
 soit  rationnelle sur $K$.  Par le th\'eor\`eme 7.3 de $\cite{GG}$
 on a 
 \begin{equation}
 \vert {\omega}\vert=\frac{{\vert \omega_{0}}\vert}{\vert \sqrt{\Theta}\vert}
 \end{equation}  
 
 La proposition 4.7 de \cite{Gr} montre alors que
 \begin{equation}\label{eq2.2.2}
 \int_{\uT^0(O_{F})} L_{p}(1,T)\vert \underline{\omega}\vert=1.
 \end{equation}
 Le r\'esultat de Gross est en fait beaucoup plus g\'en\'eral.
 Il s'applique \`a un groupe r\'eductif arbitraire sur $K$ et aux mod\`eles
 entiers sur $O_{K}$ donn\'es par la th\'eorie de Bruhat-Tits. 
 Dans le cas d'un tore, ces mod\`eles co\"\i ncident
 avec les mod\`eles de N\'eron-Raynaud de type fini.
 Dans les notations de  \cite{Gr} le motif $M$ qui appara\^\i t
 est juste $X^*(T)$ pour un tore de sorte que l'on a la relation
 $\det(1-F\vert M^{\vee}(1)^{I})=\det(1-\frac{F}{p}\vert X^*(T)^{I})=L_{p}(1,T)^{-1}$.
 
\section{Le quasi-discriminant $D_{T}$.}\label{s.3}

 Soit $T$ un tore sur $\QQ$ de dimension $d$.
 Soit $L$ le corps de d\'ecomposition de $T$. 
 Le but de cette partie est de donner une formule ferm\'ee
 pour le quasi-discriminant $D_{T}$ de $T$ et de comparer
 $D_{T}$ \`a la valeur absolue du discriminant $D_{L}$ de $L$.
 
 Soit $\uT$ le mod\`ele de N\'eron-Raynaud de type fini de $T$ sur $\ZZ$.
 Pour tout nombre premier $p$,
 $\uT_{p}:=\uT_{\ZZ_{p}}$ est le mod\`ele de N\'eron-Raynaud de type fini 
 de $T_{\QQ_{p}}$ sur $\ZZ_{p}$  d\'ecrit dans la section \ref{s2.2.1}.
 Soit $\phi_{T,p}:=\phi(\uT_{p})$ le groupe des composantes de $\uT_{p}$.
 On note $a(T)$ le conducteur d'Artin de $X^*(T)\otimes \QQ$.
 Donc $a(T)=\prod_{p}p^{a_{p}(T)}$
 o\`u $a_{p}(T)$ est le conducteur d'Artin de $X^*(T_{\QQ_{p}})\otimes \QQ$.
 
 Soit $\SSS:=\Res_{\CC/\RR}\GG_{m,\CC}$ le tore de Deligne.
 On a une d\'ecomposition en produit direct sous la forme
\begin{equation}\label{eq3.2}
 T_{\RR}=\GG_{m,\RR}^{a}\times \SSS^b\times SO(2)_{\RR}^c
\end{equation}
 avec $\dim(T)=a+2b+c$ (\cite{Vo}, p. 106).
 
\subsection{Une formule ferm\'ee pour $D_{T}$}\label{s.3.1}
 Le but de cette section est de montrer la formule ferm\'ee suivante pour
 le quasi-discriminant $D_{T}$ de $T$.
 
 \begin{prop}\label{p3.1}
 On a 
 \begin{equation}
 D_{T}= \frac{a(T)}{2^{2a}(2\pi)^{2b+2c}\prod_{p}\vert \phi_{T,p}(\FF_{p})\vert^2}
 \end{equation}
 \end{prop}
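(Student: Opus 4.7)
Par d\'efinition, $D_T = c_T^{-2}$ o\`u la constante $c_T$ est d\'etermin\'ee par $|\omega_T| = c_T\,\nu_T$. Comme les deux mesures se factorisent en produits locaux et que $\nu_T(K_T^m) = 1$, on peut \'ecrire $c_T = c_{T,\infty} \prod_p c_{T,p}$ o\`u $c_{T,v}$ d\'esigne la masse de $K_{T,v}^m$ sous le facteur local de la mesure de Tamagawa. L'id\'ee est donc de calculer chaque $c_{T,v}$ s\'epar\'ement, puis de multiplier.

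\emph{Place archim\'edienne.} J'utiliserai la d\'ecomposition (\ref{eq3.2}), $T_\RR \simeq \GG_{m,\RR}^a \times \SSS^b \times SO(2)_\RR^c$, pour comparer $|\omega_\infty|$ et $\nu_\infty$ facteur par facteur. Sur $\GG_{m,\RR}$, les deux composantes connexes de $\RR^\times$ donnent un facteur $2$; sur $\SSS$ et sur $SO(2)_\RR$, le cercle $S^1$ de longueur $2\pi$ donne un facteur $2\pi$. Un calcul direct en coordonn\'ees polaires fournit
$$c_{T,\infty} = 2^a(2\pi)^{b+c}.$$

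\emph{Places finies.} L'ingr\'edient cl\'e est la formule int\'egrale de Gross (\ref{eq2.2.2}) : $\int_{\uT^0(\ZZ_p)} L_p(1,T)\, |\underline{\omega}| = 1$, o\`u $\underline{\omega}$ engendre $\omega(\uT_p)$. La lissit\'e de $\uT_p$ entra\^\i ne $K_{T,p}^m = \uT(\ZZ_p)$ avec $\uT(\ZZ_p)/\uT^0(\ZZ_p) \simeq \phi_{T,p}(\FF_p)$ (Hensel), d'o\`u $L_p(1,T)\, |\underline{\omega}|(K_{T,p}^m) = |\phi_{T,p}(\FF_p)|$. Je relierai ensuite la forme $\QQ$-rationnelle $\omega$ d\'efinissant la mesure de Tamagawa au g\'en\'erateur local $\underline{\omega}$ : la construction de Gross-Gan rappel\'ee dans la section \ref{s2.2.2}, via la classe $\Theta \in O_K/O_K^{\times 2}$ de valuation $a_p(T)$, fournit la comparaison $|\omega|_p = p^{-a_p(T)/2}\,|\underline{\omega}|_p$, et donc $c_{T,p} = p^{-a_p(T)/2}\,|\phi_{T,p}(\FF_p)|$.

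En prenant le produit on obtient $c_T = 2^a(2\pi)^{b+c}\cdot a(T)^{-1/2} \prod_p |\phi_{T,p}(\FF_p)|$, puis en \'elevant au carr\'e et en inversant, on retrouve la formule annonc\'ee pour $D_T$. Le point le plus d\'elicat, et sans doute le v\'eritable obstacle, sera le contr\^ole pr\'ecis de la puissance demi-enti\`ere $p^{-a_p(T)/2}$ au niveau des mesures de Haar : la relation $|\omega|_p = |\omega_0|_p/|\sqrt{\Theta}|_p$ de Gross-Gan devra \^etre articul\'ee avec la comparaison (moins \'evidente) entre la forme $\omega_0$ du mod\`ele d\'eploy\'e sur $O_L$ et le g\'en\'erateur $\underline{\omega}$ du mod\`ele de N\'eron sur $O_K$, notamment aux premiers sauvagement ramifi\'es o\`u $\Theta$ n'est pas un carr\'e v\'eritable dans $O_K$.
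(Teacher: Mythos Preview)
Your plan follows the same route as the paper: the archimedean factor $2^a(2\pi)^{b+c}$ from the decomposition $T_\RR\simeq\GG_{m,\RR}^a\times\SSS^b\times SO(2)^c$, and at each finite $p$ the Gross integral (\ref{eq2.2.2}) together with $|\uT(\ZZ_p)/\uT^0(\ZZ_p)|=|\phi_{T,p}(\FF_p)|$. The one substantive point where you diverge from the paper is the way the conductor is inserted, and there your formulation has a gap.

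Your local factorisation $c_T=c_{T,\infty}\prod_p c_{T,p}$ is not canonical: each $c_{T,v}$ depends on the global $\QQ$-rational form $\omega$ chosen to define $|\omega_T|$, and the identity $|\omega|_p=p^{-a_p(T)/2}|\underline\omega|_p$ cannot hold for a fixed $\QQ$-rational $\omega$ at every $p$ (when $a_p(T)$ is odd the right side is not an integral power of $p$, whereas the ratio of two $\QQ_p$-rational top forms always is). The values you write down --- $c_{T,\infty}=2^a(2\pi)^{b+c}$ and $c_{T,p}=p^{-a_p(T)/2}|\phi_{T,p}(\FF_p)|$ --- are exactly what one gets by using the \emph{non}-$\QQ$-rational form $\omega_0$ at every place; the product is correct by the product formula, but the intermediate objects are not Haar measures on $T(\QQ_v)$.

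The paper handles this via the \emph{global} Gross--Gan class: there exists $D\in\QQ^*/\QQ^{*2}$ with $\omega_0/\sqrt{D}$ genuinely $\QQ$-rational, and a parity argument (both $\omega_0/\sqrt{D}$ and $\omega_0/\sqrt{\Theta_p}$ are $\QQ_p$-rational, so $v_p(D)\equiv v_p(\Theta_p)\bmod 2$) lets one normalise $D=(-1)^{b+c}a(T)$. With this choice $|\omega_0/\sqrt{D}|_p=|\underline\omega|_p$ \emph{exactly} at every finite $p$, and the whole factor $a(T)^{-1/2}$ appears at the archimedean place alone. So the obstacle you flag is not local and has nothing to do with wild ramification --- the identity $|\underline\omega|=|\omega_0|/|\sqrt\Theta|$ already covers that case --- but is the global packaging of the local $\Theta_p$ into a single $D\in\QQ^*$.
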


{\it Preuve.} 
 Soit $\omega(\uT)$ la puissance ext\'erieure maximale de l'espace des
 $1$-formes diff\'erentielles invariantes sur $\uT$. Alors $\omega(\uT)$
 est un $\ZZ$-module libre de rang $1$ dont on fixe un g\'en\'erateur $\omega$.
 
 Soit $L$ le corps de d\'ecomposition de $T$. Fixons un isomorphisme
 $\phi:T_{L}\rightarrow \GG_{m,L}^d$. Soit $\alpha_{0}$ un g\'en\'erateur du 
 $\ZZ$-module libre de rang $1$ $\omega(\GG_{m,\ZZ}^d)$ et
 $\omega_{0}:=\phi^*\alpha_{0}$. D'apr\`es \cite{GG} cor. 3.7,
 il existe $D\in \QQ^*/\QQ^{*2}$ tel que $\frac{\omega_{0}}{\sqrt{D}}$
 est rationnel sur $\QQ$.  
 
 On peut fixer un repr\'esentant $D$ dans $\QQ^*$ de la mani\`ere suivante.
 Pour tout nombre premier $p$, on note $\omega_{p}$ et $\omega_{0,p}$ les 
 diff\'erentielles invariantes sur $T_{\QQ_{p}}$ induites par $\omega$ et $\omega_{0}$.
 D'apr\`es les r\'esultats de la section \ref{s2.2.2}, il existe 
 $\Theta_{p}\in \QQ_{p}/\QQ_{p}^{*2}$,
 v\'erifant $v_{p}(\Theta_{p})=a(X^*(T_{\QQ_{p}}))$,
  tel que 
  \begin{equation}\label{eq3.1}
  \frac{\omega_{0,p}}{\sqrt{\Theta_{p}}}
  \end{equation}
 soit rationnel sur $\QQ_{p}$ et tel que 
 $$
 \vert \omega_{p}\vert=\frac{\vert \omega_{0,p}\vert}{\vert \sqrt{\Theta_{p}}\vert}.
 $$
 
 Comme $\frac{\omega_{0,p}}{\sqrt D}$ et $\frac{\omega_{0,p}}{\sqrt{\Theta_{p}}}$
 sont  deux formes diff\'erentielles invariantes rationnelles sur $T_{\QQ_{p}}$ on voit que 
 $v_{p}(D)-v_{p}(\Theta_{p})$ est pair. En changeant $D$ en $Dp^{2k}$
 pour un $k$ convenable, on peut supposer que 
 $$
 v_{p}(D)=v_{p}(\Theta_{p})=a(X^*(T_{\QQ_{p}})).
 $$
 On peut donc en notant $a_{p}=a(X^*(T_{\QQ_{p}}))$ 
 choisir $D$ au signe pr\`es sous la forme
 $$
 D=\epsilon(T)\prod_{p} p^{a_{p}}=\epsilon(T) a(T)
 $$
 avec $\epsilon(T)=\pm 1$.
 
 On suppose dans la suite que $D$ est ainsi normalis\'e. 
 Le signe $\epsilon(T)$ d\'epend de $T_{\RR}$. 
 Un calcul direct utilisant la d\'ecomposition 
 (\ref{eq3.2}) de $T_{\RR}$ montre que $i^{b+c} \omega_{0,\infty}$ est 
 rationnelle sur $T_{\RR}$. Comme la forme diff\'erentielle $\frac{\omega_{0,\infty}}{\sqrt{\epsilon(T)}}$
 est aussi rationnelle sur $T_{\RR}$, on en d\'eduit que $\epsilon(T)=(-1)^{b+c}$.

Un choix normalis\'e de $D$ est donc 
$$
D=(-1)^{b+c} a(T).
$$

Soit $\nu_{T}=\nu_{\infty}\prod_{p}\nu_{p}$ la mesure de Haar sur
$T(\AAA_{f})$ d\'efinie dans la section \ref{s2.1.3}. 
En utilisant la forme invariante $\QQ$-rationelle
 $\frac{\omega_{0}}{\sqrt{D}}$ dans la d\'efinition de $\vert \omega_{T}\vert$
 et l'\'equation (\ref{eq3.1}), on trouve
$$
\vert \omega_{T}\vert =\frac{\vert \omega_{0,\infty}\vert}{\sqrt{a(T)}}\prod_{p}
 \vert \omega_{p}\vert L_{p}(1,T)
$$
et par d\'efinition
$$
\nu_{T}=\sqrt{D_{T}} \vert \omega_{T}\vert.
$$

Un calcul simple aux places r\'eelles utilisant la d\'ecomposition (\ref{eq3.2}) de $T_{\RR}$
montre que 
$$
\vert \omega_{0,\infty}\vert=2^{a}(2\pi)^{c+b}\nu_{T,\infty}.
$$

Fixons un compact $U_{\infty}$ de $T(\RR)$ d'int\'erieur non vide et suffisamment r\'egulier.
Alors
$$
\int_{U_{\infty}\times \prod_{p}K_{T,p}^m}\vert \omega_{T}\vert=
\frac{1}{2^{a} (2\pi)^{b+c} \sqrt{D_{T}}} \int_{U_{\infty}} \vert \omega_{0,\infty}\vert
=\int_{U_{\infty}} \frac{\vert \omega_{0,\infty}\vert}{\sqrt{a(T)}}\prod_{p}\int_{K_{T,p}^m}
\vert\omega_{T,p}\vert L_{p}(T,1).
$$

Par d\'efinition du mod\`ele de N\'eron-Raynaud de type fini de $T$,
$\uT_{p} (\ZZ_{p})=K_{T,p}^m$
pour tout premier $p$.

On en d\'eduit que  
$$
\int_{K_{T,p}^m}
\vert\omega_{T,p}\vert L_{p}(T,1)=\vert \phi_{T,p}(\FF_{p})\vert\int_{\uT_{p}^0(\ZZ_{p})}
\vert\omega_{T,p}\vert L_{p}(T,1)=\vert \phi_{T,p}(\FF_{p})\vert
$$
en utilisant l'\'equation (\ref{eq2.2.2}). Ceci termine la preuve
de la proposition \ref{p3.1}

\subsection{Minoration de $a(T)$ et de $D_{T}$.}\label{s.3.2}

On conserve les notations de la section \ref{s.3}.
Le but de cette partie est de montrer les minorations suivantes
du conducteur d'Artin $a(T)$  et du quasi-discriminant $D_{T}$ de  $T$.

\begin{prop}\label{p3.2}
Soit $d$ un entier positif. Il existe de constantes $c(d)$, $c'(d)$, $A(d)$ et $\lambda(d)$
strictement positives
telles que pour tout tore $T$ sur $\QQ$ de dimension $d$ dont le corps de
d\'ecomposition est $L$, on a 
\begin{equation}
a(T)\ge c(d) D_{L}^{\lambda(d)}.
\end{equation}
et 
\begin{equation}
D_{T}\ge c'(d) A(d)^{i(L)}D_{L}^{\lambda(d)}.
 \end{equation} 
 Dans cette derni\`ere \'equation 
 $i(L)$ d\'esigne le nombre de nombres premiers divisant $D_{L}$.
\end{prop}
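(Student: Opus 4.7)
The plan is to reduce both inequalities to a local pointwise estimate, to be checked prime by prime using the explicit formula for the Artin conductor in terms of the ramification filtration.

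\medskip

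\textbf{Step 1 : réduction au niveau local.} Soit $p$ un nombre premier et $\Fp$ une place de $L$ au-dessus de $p$. Notons $G_{\Fp}=\Gal(L_{\Fp}/\QQ_{p})$ le groupe de décomposition et $\Delta_{-1}=G_{\Fp}\supset \Delta_{0}\supset \Delta_{1}\supset \cdots$ la filtration de ramification, avec $g_{i}=|\Delta_{i}|$. La définition de la section 2.2.1 donne
\begin{equation*}
a_{p}(T)=\sum_{i\ge 0}\frac{g_{i}}{g_{0}}\bigl(d-\dim V^{\Delta_{i}}\bigr),\quad V:=X^{*}(T)\otimes\QQ.
\end{equation*}
D'autre part, la formule conducteur-discriminant appliquée au caractère régulier $\chi_{\rm reg}$ de $G_{\Fp}$ donne
\begin{equation*}
v_{p}(D_{L})=a_{p}(\chi_{\rm reg})=\frac{n_{p}}{g_{0}}\sum_{i\ge 0}(g_{i}-1),
\end{equation*}
où $n_{p}=|G_{\Fp}|$. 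L'inégalité à démontrer $a(T)\ge c(d)D_{L}^{\lambda(d)}$ se ramène donc, à une constante multiplicative près (absorbant les places de bonne réduction), à la minoration pointwise
\begin{equation*}
a_{p}(T)\ge \lambda(d)\cdot v_{p}(D_{L}) \quad \text{pour tout } p,
\end{equation*}
c'est-à-dire
\begin{equation*}
\sum_{i\ge 0}g_{i}\bigl(d-\dim V^{\Delta_{i}}\bigr)\ge \lambda(d)\cdot n_{p}\sum_{i\ge 0}(g_{i}-1).
\end{equation*}

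\medskip

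\textbf{Step 2 : utilisation de la fidélité.} Puisque $L$ est le corps de décomposition de $T$, l'action de $G=\Gal(L/\QQ)$ sur $V$ est fidèle, donc l'action de tout sous-groupe non trivial $\Delta_{i}$ sur $V$ l'est aussi. En particulier $\dim V-\dim V^{\Delta_{i}}\ge 1$ dès que $g_{i}>1$. Plus généralement, la quantité $d-\dim V^{\Delta_{i}}$ ne dépend que de la classe de conjugaison du couple $(\Delta_{i}\hookrightarrow G_{\Fp}, V)$ dans $\GL_{d}(\ZZ)$. Par un résultat classique de Jordan-Minkowski il n'y a, à conjugaison près dans $\GL_{d}(\ZZ)$, qu'un nombre fini de sous-groupes finis et de représentations fidèles de dimension $d$. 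Les suites $(\Delta_{i})$ possibles ont également des longueurs et ordres bornés en fonction de $d$. On définit alors $\lambda(d)$ comme l'infimum (c'est en fait un minimum, atteint sur un ensemble fini) du rapport
\begin{equation*}
\frac{\sum_{i}g_{i}(d-\dim V^{\Delta_{i}})}{n\sum_{i}(g_{i}-1)}
\end{equation*}
pris sur l'ensemble fini des configurations (groupe fini $H\subset \GL_{d}(\ZZ)$, représentation fidèle $V$ de dimension $d$, filtration de ramification abstraite dans $H$). Ce minimum est strictement positif d'après la remarque sur la fidélité, ce qui fournit $\lambda(d)$ comme dans la définition \ref{d3.4}.

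\medskip

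\textbf{Step 3 : minoration du discriminant $D_{T}$.} En combinant la proposition \ref{p3.1} et la proposition \ref{prop2.4}, on a
\begin{equation*}
D_{T}=\frac{a(T)}{2^{2a}(2\pi)^{2b+2c}\prod_{p}|\phi_{T,p}(\FF_{p})|^{2}}\ge \frac{a(T)}{2^{2a}(2\pi)^{2b+2c}\Phi(d)^{2i(L)}},
\end{equation*}
puisque $\phi_{T,p}$ est trivial aux places de bonne réduction de $T$, qui contiennent toutes les places $p\nmid D_{L}$. Comme $a+2b+c=d$, les facteurs $2^{2a}(2\pi)^{2b+2c}$ sont bornés en fonction de $d$ seul. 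En posant $A(d):=\Phi(d)^{-2}$ et en utilisant la première minoration $a(T)\ge c(d)D_{L}^{\lambda(d)}$ établie aux Steps 1-2, on obtient
\begin{equation*}
D_{T}\ge c'(d)A(d)^{i(L)}D_{L}^{\lambda(d)}.
\end{equation*}

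\medskip

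\textbf{Principal obstacle.} La difficulté essentielle réside dans le Step 2 : il s'agit de rendre effective la minoration du rapport local, c'est-à-dire de montrer que sa valeur minimale est strictement positive et de la calculer (au moins en principe) à partir des données combinatoires finies. La fidélité de la représentation est l'ingrédient clé, mais il faut la coupler à la finitude (à conjugaison près) des sous-groupes finis de $\GL_{d}(\ZZ)$ et au contrôle des filtrations de ramification admissibles pour obtenir une constante $\lambda(d)$ explicite et uniforme.
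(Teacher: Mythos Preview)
Your Step~3 is correct and identical to the paper's argument: the passage from the bound on $a(T)$ to the bound on $D_T$ via Proposition~\ref{p3.1} and Proposition~\ref{prop2.4}, with $A(d)=\Phi(d)^{-2}$, is exactly what is done there.

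Steps~1--2, however, diverge from the paper and contain genuine gaps.

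First, a computational point: your formula for $v_p(D_L)$ is off. The regular representation of $G=\Gal(L/\QQ)$ restricted to $G_{\Fp}$ is $[G:G_{\Fp}]$ copies of the regular representation of $G_{\Fp}$, so the correct expression is $v_p(D_L)=\frac{|G|}{g_0}\sum_{i\ge 0}(g_i-1)$, not $\frac{n_p}{g_0}\sum_{i\ge 0}(g_i-1)$. This only affects constants, but it shows the local comparison was not carried out carefully.

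More seriously, the heart of your Step~2 is the assertion that ``les suites $(\Delta_i)$ possibles ont également des longueurs et ordres bornés en fonction de $d$'', from which you conclude that the infimum is taken over a finite set. The orders $g_i$ are indeed bounded (since $G_{\Fp}\subset\GL_d(\ZZ)$), but the \emph{length} of the ramification filtration --- i.e.\ the largest $i$ with $\Delta_i\neq\{1\}$ --- is \emph{not} a priori bounded by group-theoretic data: for wildly ramified extensions the breaks can sit at large indices. It is true that over $\QQ_p$ with $|G_{\Fp}|$ bounded the length is in fact bounded (via a bound on the different, or because wild primes satisfy $p\le\alpha(d)$ and there are only finitely many extensions of $\QQ_p$ of bounded degree), but this is precisely the nontrivial input you have skipped. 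Without it, your ``minimum sur un ensemble fini'' is unjustified.

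The paper avoids this issue entirely by splitting the primes. For $p>d+1$ one checks (Lemma~\ref{l3.3}) that the inertia $I_{\Fp}$, acting faithfully on $X^*(T)$, cannot contain an element of order $p$, so the ramification is \emph{tame} and $I_{\Fp}$ is cyclic; Lemma~\ref{l.3.5} then gives the sharp inequality $a_p(T)\ge\lambda(d)\,a_p(R_L)$ with the explicit $\lambda(d)$ of Definition~\ref{d3.4}, obtained by counting nontrivial characters of a cyclic group in a faithful rational representation. For the finitely many primes $p\le d+1$, one simply bounds $a_p(R_L)\le\alpha(d)^2$ and absorbs these into the constant $c(d)$. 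This yields the precise constant of Definition~\ref{d3.4}, whereas your abstract infimum, even if made rigorous, would not obviously coincide with it --- so your final claim ``ce qui fournit $\lambda(d)$ comme dans la définition~\ref{d3.4}'' is not substantiated.
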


La minoration de $D_{T}$ est une cons\'equence de la minoration
de $a(T)$, de la formule ferm\'ee pour $D_{T}$ donn\'ee \`a la proposition
\ref{p3.1} et de la majoration du cardinal du groupe des composantes
du mod\`ele de N\'eron $\uT$ de $T$ obtenue \`a la proposition 2.4.
On peut prendre $A(d)=\frac{1}{\Phi(d)^2}$ (avec les notations de 2.4)
et $c'(d)=\frac{c(d)}{(2\pi)^{2d}}$.

Les constantes $\lambda(d)$ et $c(d)$ sont explicit\'ees 
dans les d\'efinitions \ref{d3.4} et \ref{d3.6}.

On utilisera essentiellement le lemme \'el\'ementaire suivant.

\begin{lem}\label{l3.3}

Soit $X$ un $\QQ$-vectoriel de dimension $d$ muni d'une action fid\`ele
$\rho$ d'un groupe cyclique $I=\ZZ/n\ZZ$ pour un entier $n=\prod p^{n_{p}}$.
Soit $\phi(x)$ la fonction indicatrice d'Euler.
Le nombre $n_{\rho}$ 
de caract\`ere non triviaux de $I$ intervenant dans $X\otimes \CC$
v\'erifie
$$
n_{\rho}\ge \sum_{p\vert n}\phi(p^{n_{p}})-\epsilon(n)
$$
avec $\epsilon(n)=0$ si $n_{2}\neq 1$ ou $n=2$ et $\epsilon(n)=1$
sinon.
En particulier $ \sum_{p\vert n} \phi(p^{n_{p}})-\epsilon(n)\leq d$.
\end{lem}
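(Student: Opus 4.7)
Mon id\'ee est de r\'eduire l'\'enonc\'e \`a une in\'egalit\'e combinatoire sur la fonction $\phi$, via la description galoisienne des orbites de caract\`eres de $I=\ZZ/n\ZZ$.

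Je d\'ecompose d'abord $X\otimes\CC$ en caract\`eres de $I$. Les caract\`eres irr\'eductibles sont les $\chi_k:j\mapsto e^{2\pi ijk/n}$ pour $k\in\ZZ/n\ZZ$, et $\chi_k$ est d'ordre $m=n/\pgcd(k,n)$; pour chaque $m\mid n$, il y a exactement $\phi(m)$ caract\`eres d'ordre $m$, formant une unique orbite sous l'action de $\Gal(\QQ(\zeta_n)/\QQ)=(\ZZ/n\ZZ)^*$. Comme $X$ est d\'efini sur $\QQ$, le multi-ensemble des caract\`eres intervenant dans $X\otimes\CC$ est stable sous cette action; donc si un caract\`ere d'ordre $m$ intervient, \emph{tous} les $\phi(m)$ caract\`eres d'ordre $m$ interviennent (avec m\^eme multiplicit\'e). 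En notant $S\subseteq\{d\mid n:d>1\}$ l'ensemble des ordres $>1$ pr\'esents, ceci donne $n_\rho\ge\sum_{m\in S}\phi(m)$. La fid\'elit\'e de $\rho$ se traduit comme $\bigcap_{m\in S}\ker\chi_m=\{0\}$ dans $I$; or $\ker\chi_m$ est l'unique sous-groupe d'indice $m$ de $\ZZ/n\ZZ$, donc cette intersection a pour indice $\mathrm{lcm}(S)$, et la fid\'elit\'e \'equivaut \`a $\mathrm{lcm}(S)=n$. En particulier, pour chaque premier $p\mid n$ il existe $m_p\in S$ avec $p^{n_p}\mid m_p$.

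Le c\oe{}ur combinatoire consiste alors \`a \'etablir que pour tout $S\subseteq\{d\mid n,d>1\}$ v\'erifiant $\mathrm{lcm}(S)=n$, on a $\sum_{m\in S}\phi(m)\ge \sum_{p\mid n}\phi(p^{n_p})-\epsilon(n)$. Je choisis une ``couverture'' $S_0=\{m_p\}_{p\mid n}\subseteq S$, et pour $m\in S_0$ je pose $P(m)=\{p:m_p=m\}$; ces ensembles forment une partition des premiers divisant $n$. La multiplicativit\'e de $\phi$ jointe \`a $\prod_{p\in P(m)}p^{n_p}\mid m$ donne $\phi(m)\ge\prod_{p\in P(m)}\phi(p^{n_p})$, et l'in\'egalit\'e \'el\'ementaire $xy\ge x+y$ pour $x,y\ge 2$ entiers (it\'er\'ee) fournit
$$
\prod_{p\in P(m)}\phi(p^{n_p})\ge \sum_{p\in P(m)}\phi(p^{n_p})-\delta_m,
$$
avec $\delta_m\in\{0,1\}$, la valeur $1$ n'\'etant possible que si $2\in P(m)$, $n_2=1$ et $|P(m)|\ge 2$ (seul cas o\`u un facteur $\phi(p^{n_p})=1$ appara\^\i t). Comme chaque premier appartient \`a un unique $P(m)$, au plus un $\delta_m$ vaut $1$; et la discussion des cas $n=2$, $n$ impair, $4\mid n$ montre qu'aucune perte n'est possible hormis lorsque $n_2=1$ et $n\ne 2$, ce qui correspond exactement \`a la d\'efinition de $\epsilon(n)$. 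En sommant sur $m\in S_0\subseteq S$ on obtient l'in\'egalit\'e cherch\'ee, et la cons\'equence $\sum_p\phi(p^{n_p})-\epsilon(n)\le d$ r\'esulte alors de $n_\rho\le\dim_\CC(X\otimes\CC)=d$.

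Le point potentiellement d\'elicat est cet appariement fin entre la ``perte'' combinatoire $\delta_m$ et la d\'efinition ad hoc de $\epsilon(n)$; une fois cette comptabilit\'e r\'egl\'ee, le reste rel\`eve de manipulations formelles sur les caract\`eres galoisiens et la multiplicativit\'e de $\phi$.
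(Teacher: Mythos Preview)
Your proof is correct and follows essentially the same route as the paper's: reduce to the combinatorial statement that $\sum_{m\in S}\phi(m)\ge\sum_{p\mid n}\phi(p^{n_p})-\epsilon(n)$ whenever $\mathrm{lcm}(S)=n$, via the $\QQ$-rationality of $X$ (phrased by you through Galois orbits of characters, by the paper through irreducibility of cyclotomic polynomials), then handle the combinatorics by the inequality $xy\ge x+y$ for $x,y\ge 2$ together with the observation that $\phi(p^{n_p})=1$ only when $p^{n_p}=2$. The only cosmetic difference is that the paper first replaces each $d_i$ by the product of those $p^{n_p}$ at which $d_i$ has full valuation (which can only decrease $\sum\phi(d_i)$), whereas you organise the same reduction by choosing a covering $\{m_p\}$ and partitioning the primes; the two bookkeeping devices are equivalent.
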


{\it Preuve.} Soit $\sigma$ un g\'en\'erateur de $I$.
Alors $\rho(\sigma)$ est diagonalisable dans $X\otimes \CC$.
Les valeurs propres de $\sigma$ sont des racines $n$-i\`emes
de l'unit\'e. Soit $d_{1}, d_{2},\dots,d_{r}$ les ordres des valeurs propres
de $\sigma$. Comme l'action est fid\`ele les $d_{i}$  ne sont  pas \'egaux \`a $1$.
Si $(x_{1}, \dots, x_{s})$ est un $s$-tuple   d'entiers on note
$p(x_{1},\dots,x_{s})$ 
le plus petit commun multiple des $x_{i}$.
Comme $\sigma$ est d'ordre $n$, on a $p(d_{1},\dots,d_{r})=n$.
Si $\sigma$ admet une valeur propre d'ordre $d_{i}$, l'irr\'eductibilit\'e
des polyn\^{o}mes cyclotomiques sur $\QQ$ montre que 
$\sigma$ admet toutes les racines primitives $d_{i}$-i\`eme de l'unit\'e
comme valeurs propres.  Comme les valeurs propres de $\sigma$
d\'eterminent des caract\`eres de $I$ intervenant dans $X\otimes \CC$,
on en d\'eduit que 
$$
n_{\rho}\ge \sum_{i=1}^r \phi(d_{i}).
$$   
Il suffit donc de minorer le second membre de cette in\'egalit\'e par
$$
\sum_{p\vert n}\phi(p^{n_{p}})-\epsilon(n)
$$
quand $(d_{1},\dots,d_{r})$ varie parmi les diviseurs de $n$ tels
que  $p(d_{1},\dots,d_{r})=n$. 

On note $v_{p}$ la valuation $p$-adique normalis\'ee sur 
 $\QQ$. Pour tout nombre premier $p$ divisant $n$, il existe un entier 
 $i\in \{1,\dots,r\}$ tel que $v_{p}(d_{i})=v_{p}(n)$. En divisant les entiers
 $d_{i}$ convenablement, on peut supposer que pour tout $p$ divisant $n$ et pour tout
 $i\in \{1,\dots,r\}$,
  $v_{p}(d_{i})=v_{p}(n)$ ou  $v_{p}(d_{i})=0$ et qu'il existe un indice $i$
  tel que $v_{p}(d_{i})=v_{p}(n)$.
 On obtient alors le r\'esultat en remarquant que si $a$ et $b$
 sont des entiers premiers entre eux $\phi(ab)=\phi(a)\phi(b)\ge \phi(a)+\phi(b)$
 d\`es que $\min(a,b)\ge 3$ et que si $a$ est impair
 $\phi(2a)=\phi(a)=\phi(a)+\phi(2)-1$.

\begin{defini}\label{d3.4}
On d\'efinit les fonctions sur $\NN^*$,
$$
\psi(s):= \max \{n=\prod p^{n_{p}}\in \NN\vert \ \sum_{p\vert n}\phi(p^{n_{p}})-\epsilon(n)\leq s\}
$$
et
\begin{equation}
\lambda(s):= \min_{n=\prod p^{n_{p}}\leq \psi(s)} \frac{\sum_{p\vert n}\phi(p^{n_{p}})-\epsilon(n)}{n-1}.
\end{equation}
\end{defini}

\begin{lem}\label{l.3.5}
Soit $T$ un tore sur $\QQ$ de dimension $d$, $L$ son corps de d\'ecomposition
et $R_{L}:=\Res_{L/\QQ} \GG_{m,L}$.
Soit $\Fp$ une place de
$L$ divisant un premier $p$. On suppose  l'extension locale $L_{\Fp}/\QQ_{p}$
mod\'er\'ement ramifi\'ee.  On note $D_{\Fp}$ et $I_{\Fp}$ le groupe de d\'ecomposition
et le groupe d'inertie en $\Fp$. Soit $a_{p}(T)$ et $a_{p}(R_{L})$ les conducteurs d'Artin
des  $D_{\Fp}$-modules $X^*(T_{\QQ_{p}})$ et  $X^*(R_{L,\QQ_{p}})$ respectivement. Alors
\begin{equation}
a_{p}(T)\ge \lambda(d) a_{p}(R_{L}).
\end{equation}
\end{lem}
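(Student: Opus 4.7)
The plan is to combine the tame formula for Artin conductors with Lemma~\ref{l3.3} applied to the cyclic inertia group acting on $X^*(T)$.

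First, by the tame hypothesis the wild inertia is trivial, so $I_\Fp$ is cyclic of some order $n=|I_\Fp|$, and for any continuous finite-dimensional $D_\Fp$-representation $V$ the Artin conductor reduces to $a(V)=\dim V-\dim V^{I_\Fp}$. Applied to $X^*(R_{L,\QQ_p})_\QQ$, which is naturally identified with $\QQ[G]$ viewed as a $D_\Fp$-module via the restriction of the left-regular $G$-action, this yields an explicit formula for $a_p(R_L)$ that factors through $n-1$.

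Next, I would show that $I_\Fp$ acts \emph{faithfully} on $X^*(T)_\QQ$. By hypothesis $L$ is the minimal splitting field of $T$, so $G=\Gal(L/\QQ)$ acts faithfully on $X^*(T)$; if the restriction to $I_\Fp$ had a nontrivial kernel $I_0$, then the proper subfield $L^{I_0}\subsetneq L$ would still split $T$, contradicting the minimality of $L$. Hence $I_\Fp\simeq\ZZ/n$ embeds as a faithful cyclic subgroup of $\GL(X^*(T)_\QQ)$.

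Applying Lemma~\ref{l3.3} to this situation yields
$$a_p(T)\,=\,\dim X^*(T)_\QQ-\dim X^*(T)_\QQ^{I_\Fp}\ \geq\ \sum_{p\mid n}\phi(p^{n_p})-\epsilon(n).$$
The right-hand side is itself bounded above by $d$, so by definition of $\psi(d)$ one has $n\leq\psi(d)$, and the defining minimum for $\lambda(d)$ in D\'efinition~\ref{d3.4} gives $a_p(T)\geq\lambda(d)(n-1)$. Combined with the formula for $a_p(R_L)$ from the first step, this is the claimed inequality.

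The main obstacle is the bookkeeping in the first step: one must carefully identify the $D_\Fp$-module structure on $X^*(R_{L,\QQ_p})$ inherited from the global $G$-module $\ZZ[G]$ and compute its tame conductor so that the factor $n-1=|I_\Fp|-1$ is extracted in a form compatible with the linear-algebraic bound produced by Lemma~\ref{l3.3}. Once this normalization is settled, the combinatorial heart of the proof is the passage through Lemma~\ref{l3.3} together with the defining minimum for $\lambda(d)$, which is by design tailored exactly to convert the output of Lemma~\ref{l3.3} into the desired lower bound.
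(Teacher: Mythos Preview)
Your outline is exactly the paper's argument: both reduce via the tame conductor formula $a(V)=\dim V/V^{I_\Fp}$ to bounding below the number of distinct nontrivial characters of the cyclic group $I_\Fp$ appearing in $X^*(T)\otimes\CC$, observe that $I_\Fp$ acts faithfully simply because it is a subgroup of $G$ which already acts faithfully (your subfield detour is unnecessary here — the kernel of the restriction is automatically contained in the kernel of the $G$-action, which is trivial), apply Lemma~\ref{l3.3}, and feed the result through D\'efinition~\ref{d3.4} to obtain $a_p(T)\ge\lambda(d)\,(e_\Fp-1)$ with $e_\Fp=|I_\Fp|$. The paper then closes in one line with the assertion $a_p(R_L)=e_\Fp-1$.

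Your flagged ``obstacle'' is precisely this last step, and your caution is well placed. The restriction of $\QQ[G]$ to $I_\Fp$ is $[G:I_\Fp]$ copies of the regular representation of $I_\Fp$, so the tame formula actually gives
\[
a_p(R_L)=|G|-[G:I_\Fp]=(e_\Fp-1)\,[G:I_\Fp],
\]
not $e_\Fp-1$. What the argument genuinely delivers is therefore $a_p(T)\ge\dfrac{\lambda(d)}{[G:I_\Fp]}\,a_p(R_L)$. Since $[G:I_\Fp]\le|G|\le\alpha(d)$ is bounded purely in terms of $d$, this still gives an inequality $a_p(T)\ge\mu(d)\,a_p(R_L)$ with some $\mu(d)>0$, which is all that Proposition~\ref{p3.2} and Th\'eor\`eme~\ref{teo2.3} require downstream; but the precise constant $\lambda(d)$ appearing in the lemma as stated is not what this line of reasoning establishes. (For instance, take $d=2$, $T$ the norm-one torus of a non-Galois cubic so that $G\cong S_3$, and $p$ tame with $I_\Fp$ a transposition: then $a_p(T)=1$ while $a_p(R_L)=3$, and $\lambda(2)=2/5$ fails.) So the ``bookkeeping'' you sensed is real; either absorb the factor $[G:I_\Fp]$ into the constant, or supply a separate argument if you want the sharper exponent.
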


{\it Preuve.} Soit $\chi_{T}$ le caract\`ere du  $I_{\Fp}$-modules $X^*(T_{\QQ_{p}})\otimes \CC$
et $\chi_{R}$ celui de $X^*(R_{L,\QQ_{p}})\otimes \CC$. Soit $a_{I_{\Fp}}$ le caract\`ere
de la repr\'esentation d'Artin de $I_{\Fp}$.  Si on note $<\ ,\ >_{I_{\Fp}}$
le produit scalaire hermitien canonique sur l'espace des fonctions centrales
sur $I_{\Fp}$, alors d'apr\`es Serre (\cite{Se}, VI-2),
$$
a_{p}(T)=<\chi_{T} ,a_{I_{\Fp}} >_{I_{\Fp}},\ \ \mbox{ et } 
a_{p}(R_{L})=<\chi_{R} ,a_{I_{\Fp}} >_{I_{\Fp}}.
$$

Comme l'extension $L_{\Fp}/\QQ_{p}$ est mod\'er\'ement ramifi\'ee
le groupe d'inertie $I_{\Fp}$ est cyclique d'ordre $e_{\Fp}$.
Comme $L$ est le corps de d\'ecomposition de 
$T$, $Gal(L/\QQ)$ agit fid\`element sur $X^*(T)$.
Comme $I_{\Fp}$ est un sous-groupe de $Gal(L/\QQ)$,
il agit aussi fid\`element sur $X^*(T)$. Dans cette situation
$a_{I_{\Fp}}$ est le caract\`ere de la repr\'esentation d'augmentation
de $I_{\Fp}$ (\cite{Se}, prop. 2, p. 108). Par d\'efinition $a_{I_{\Fp}}$
est donc la somme des caract\`eres non triviaux de $I_{\Fp}=\ZZ/e_{\Fp}\ZZ$.

On en d\'eduit que 
$a_{p}(T)$ est minor\'e par le nombre de caract\`eres distincts
non triviaux de $I_{\Fp}$ apparaissant dans $X^*(T)\otimes \CC$.
Le r\'esultat est alors une application du lemme \ref{l3.3}
et du fait que $a_{p}(R_{L})=e_{\Fp}-1$.

\begin{defini}\label{d3.6}
Soit $s$ un entier, on note $\alpha(s)$ l'ordre maximal d'un sous-groupe fini de
$\GL_{s}(\QQ)$. 
%On d\'efinit une entier $\beta(s)$ comme le maximum 
%des $v_{p}(D_{L})$ quand $L$ parcours l'ensemble des extensions de $\QQ$
%de degr\'e au plus $\alpha(s)$ et $p$ l'ensemble des nombres premiers. 
%On a not\'e $D_{L}$ le discriminant de $L$ et $v_{p}$ la valuation $p$-adique
%usuelle. La finitude de $\beta(s)$ r\'esulte par exemple de (\cite{Se} Rem. p .67).

On d\'efinit alors 
\begin{equation}
c(s)=\prod_{p\leq s+1}\frac{1}{p^{\alpha(s)^{2}}}.
\end{equation}
Des bornes tr\`es pr\'ecises sur $\alpha(s)$ sont donn\'ees dans
\cite{Fr} et \cite{Fe}.
\end{defini}

On peut maintenant d\'emontrer la proposition \ref{p3.2}
avec les constantes $\lambda(d)$ et $c(d)$ de \ref{d3.4} et \ref{d3.6}.
D'apr\`es \cite{Se} VI-3, on a
$$
a(T)=\prod_{p}p^{a_{p}(T)} \mbox{ et } a(R_{L})=\prod_{p}p^{a_{p}(R_{L})}.
$$
Si $p>d+1$, soit $\Fp$ une place de $L$ au dessus de $p$.
Le groupe d'inertie $I_{\Fp}$ agit fid\`element dans $X^*(T)$
car $L$ est le corps de d\'ecomposition de $T$. Le lemme \ref{l3.3}
assure qu'il 
n'y a pas  de sous-groupes cycliques d'ordre $p$ dans $I_{\Fp}$.
La ramification en $\Fp$ est donc mod\'er\'ee et par le lemme \ref{l.3.5}
$$
p^{a_{p}(T)}\ge (p^{a_{p}(R_{L})})^{\lambda(d)}
$$

Soit $p$ un nombre premier plus petit que $d+1$.
Comme $L$ est le corps de d\'ecomposition de $T$, 
le groupe de Galois $G$ agit fid\`element dans $X^{*}(T)$.
En particulier $\dim(X^{*}(R_{L}))=[L:\QQ]=\vert G\vert\leq \alpha(d)$
et une majoration simple du conducteur d'Artin utilisant la d\'efinition 
donne $a_{p}(R_{L})\leq \alpha(d)^{2}$.

Comme $a_{p}(T)\ge 0$ et $\lambda(d)\leq 1$ on obtient
$$
p^{a_{p}(T)}\ge \frac{1}{p^{\alpha(d)^{2}}} (p^{a_{p}(R_{L})})^{\lambda(d)}.
$$
Comme $D_{L}=a(R_{L})$, on finit la preuve de la proposition
\ref{p3.2} en combinant les r\'esultats obtenus pour les diff\'erents
nombres premiers $p$.

\subsection{Les invariants  cohomologiques de $T$.}

On garde les notations pr\'ec\'edentes, en particulier $T$ est un tore sur $\QQ$
de dimension fix\'ee $r$. Le but de cette partie est de donner des bornes uniformes
pour la taille du  nombre de Tamagawa $\tau_{T}$ de $T$ . Les bornes que nous avons en vue seront 
 des cons\'equences imm\'ediates de l'interpr\'etation cohomologique de cette 
 quantit\'e.
 
 On utilise les notations de l'appendice de Kottwitz et Shelstad \cite{KS} 
 concernant la dualit\'e de Tate-Nakayama.  On note donc pour tout tore
 $S$ sur $\QQ$
 
 $$
 H^{i}(\AAA,S):=H^{i}(\QQ,S(\overline{\AAA}))
 $$
 $$
  H^{i}(\AAA/\QQ,S):=H^{i}(\QQ,S(\overline{\AAA})/S(\oQ))
 $$
 et $ker^{i}(\QQ,S)$ le noyau de l'application naturelle $H^{i}(\QQ,S)\rightarrow 
 H^{i}(\AAA,S)$. On note $\tilde{H}^{i}(.,.)$ les groupe de cohomologie modifi\'es 
 \`a la Tate correspondants.
 
 \subsubsection{Estimation de $\tau_{T}$.}
 
 Le but de cette partie est de montrer l'estimation de $\tau_{T}$ suivante.
 
 \begin{prop}\label{p3.7}
 Il existe des constantes positives $c_{1}(r)$, et $c_{2}(r)$ ne d\'ependant que de $r$
 telles que pour tout tore $T$ sur $\QQ$ de dimension $r$:
 \begin{equation}
 c_{1}(r)\leq \tau_{T}\leq c_{2}(r).
 \end{equation}
 \end{prop}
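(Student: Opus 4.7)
Le plan est d'invoquer la formule cohomologique d'Ono \cite{On1}, qui identifie le nombre de Tamagawa au rapport
\[
\tau_{T} = \frac{|H^{1}(\Gal(L/\QQ),X^{*}(T))|}{|ker^{1}(\QQ,T)|},
\]
o\`u $L$ est le corps de d\'ecomposition de $T$. Les bornes voulues se r\'eduisent alors \`a majorer le num\'erateur et le d\'enominateur par des constantes ne d\'ependant que de $r$: la borne sup\'erieure sur $\tau_{T}$ r\'esulte de la majoration du num\'erateur, et la borne inf\'erieure de celle du d\'enominateur, puisque le num\'erateur est un entier sup\'erieur ou \'egal \`a $1$.

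Le point cl\'e pour ces deux majorations est la finitude, pour $r$ fix\'e, des classes d'isomorphisme du couple $(G,X^{*}(T))$. Comme $L$ est le corps de d\'ecomposition de $T$, $G := \Gal(L/\QQ)$ agit fid\`element sur le $\ZZ$-module libre $X^{*}(T)$ de rang $r$ et se r\'ealise donc comme sous-groupe fini de $\GL_{r}(\ZZ)$. Par le th\'eor\`eme de Jordan-Zassenhaus, le nombre de tels sous-groupes \`a conjugaison pr\`es est fini, et pour chacun d'entre eux le nombre de structures de $G$-module sur $\ZZ^{r}$ est \'egalement fini. Par cons\'equent $|H^{1}(G,X^{*}(T))|$ ne prend qu'un nombre fini de valeurs quand $T$ parcourt l'ensemble des tores alg\'ebriques sur $\QQ$ de dimension $r$.

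Pour le d\'enominateur, on utilise la dualit\'e de Tate-Nakayama dans les notations de \cite{KS} rappel\'ees plus haut: $ker^{1}(\QQ,T)$ s'identifie au dual de Pontryagin de $ker^{2}(\QQ,X^{*}(T))$, qui est un sous-groupe d'un groupe de cohomologie galoisienne finie du $G$-module $X^{*}(T)$. La m\^eme finitude du couple $(G,X^{*}(T))$ fournit donc la borne uniforme cherch\'ee. Le principal obstacle technique sera la v\'erification soigneuse de cette identification dualisante pour un tore arbitraire (non n\'ecessairement anisotrope) et le contr\^ole pr\'ecis des constantes $c_{1}(r)$ et $c_{2}(r)$ en termes de la combinatoire des sous-groupes finis de $\GL_{r}(\ZZ)$; une fois ces identifications bien pos\'ees, la conclusion est une cons\'equence imm\'ediate des principes de finitude d\'ej\`a invoqu\'es.
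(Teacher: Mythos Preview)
Your proposal is correct and follows essentially the same route as the paper: Ono's cohomological formula $\tau_T=|H^1(G,X^*(T))|/|ker^1(\QQ,T)|$, finiteness of the pairs $(G,X^*(T))$ for $G$ a finite subgroup of $\GL_r(\ZZ)$, and Tate--Nakayama duality to control $ker^1(\QQ,T)$ through $H^2$ of the character module. Two small remarks: the formula is in \cite{On2} rather than \cite{On1}, and the paper phrases the duality step as ``$ker^1(\QQ,T)$ is a quotient of $H^2(\QQ,X^*(T))$'' whereas you invoke the Pontryagin-dual identification with $ker^2(\QQ,X^*(T))$; both formulations give the same cardinality bound.
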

 
 D'apr\`es le r\'esultat principal de Ono \cite{On2} on a 
 \begin{equation}\label{eqOno}
 \tau_{T}=\frac{\vert H^{1}(\QQ,X^*(T))\vert }{\vert ker^{1}(\QQ,T)\vert}.
 \end{equation}
 
 On a $H^1(\QQ,X^*(T))=H^1(\QQ,\ZZ^r)$ et $\Gal(\oQ/\QQ)$
 op\`ere via un sous-groupe fini  $G$ de $\GL_{r}(\ZZ)$.
 En utilisant la suite d'inflation-restriction (\cite{Se} VII-6), on v\'erifie que
 $ H^1(\QQ,\ZZ^r)=H^1(G,\ZZ^r)$. A $r$ fix\'e, il n'y a, \`a isomophisme pr\`es,
 qu'un nombre fini de choix pour $G$. On en d\'eduit 
 une borne uniforme pour $\vert H^1(\QQ, X^*(T))\vert $ en fonction de $r$.
 
 Par ailleurs, un calcul de cohomologie Galoisienne utilisant la dualit\'e de Nakayama-Tate
 (\cite{On2} 2.2--2.3) montre que $ker^1(\QQ,T)$ est un quotient de 
 $H^2(\QQ,X^*(T))$. Comme pr\'ec\'edemment on v\'erifie que
 $H^2(\QQ,X^*(T))=H^2(G,\ZZ^r)$ pour un sous-goupe $G$ de $\GL_{n}(\ZZ)$.
 On en d\'eduit que  $\vert ker^1(\QQ,T)\vert$ est aussi uniform\'ement born\'e en fonction 
 de $r$. Ceci termine la preuve de la proposition au vu de l'expression
 de $\tau_{T}$ dans  l'\'equation (\ref{eqOno}).
 
 \subsection{Preuve du th\'eor\`eme \ref{teo2.3}}\label{s3.4}
 
 Il s'agit juste de collecter les r\'esultats des parties pr\'ec\'edentes
 en partant de  la formule de classe de Shyr pour $T$ 
 donn\'ee \`a l'\'equation (\ref{eqcl}): 
$$
 h_{T}R_{T}=w_{T}\tau_{T} \rho_{T} D_{T}^{\frac{1}{2}}.
$$
On remarque que $w(T)\ge 1$ car  c'est un entier.
On pourrait borner $w(T)$ en fonction de $d$ mais nous
n'en n'auront pas d'usages. Les minorations de 
$\rho(T)$ (prop. \ref{p2.1}), de $D_{T}$ (prop. \ref{p3.2}) et de
$\tau_{T}$ (prop. \ref{p3.7}) permettent de finir la
minoration de $h_{T}R_{T}$ recherch\'ee au th\'eor\`eme  \ref{teo2.3}.

\section{Connexit\'e du noyau des morphismes de r\'eciprocit\'es.}\label{sec4}

\subsection{Corps de multiplication complexe et types CM.}

Soit $E$ un corps CM de degr\'e $2g$ sur $\QQ$. Soit
 $F$ son sous-corps totalement r\'eel maximal.
On note $E^{c}$ (resp. $F^c$ ) la cl\^{o}ture Galoisienne de $E$ (resp. $F$) 
et $\rho\in \Aut(E^{c}/\QQ)$ la conjugaison complexe. On pose
$$
J=\Hom(E,\oQ)=\Gal(E^{c}/E)\backslash \Gal(E^{c}/\QQ)
$$
et on fixe un type CM $\Sigma\subset J$ de sorte que 
$$
J=\Sigma\cup \Sigma^{\rho} \mbox{ et } \Sigma\cap \Sigma^{\rho}=\emptyset.
$$

On notera contrairement aux parties pr\'ec\'edentes $\Fg:=\Gal(E^c/\QQ)$
r\'eservant la lettre $G$ pour les groupes r\'eductifs intervenants  dans la suite.
Le groupe de Galois $\Fg$ op\`ere \`a droite transitivement et 
fid\`element sur $J$. On indexe les \'el\'ements de $\Sigma$ par
les indices $\{1,\dots,g\}$ et ceux de $\Sigma^{\rho}$ par les indices
 $\{-1,\dots,-g\}$ avec la convention que $k.\rho=-k$ pour tout
 $k\in \{1,\dots,g\}$.  Soit $C_{g}:= (\ZZ/2\ZZ)^g \rtimes S_{g}$ 
 le centralisateur de $\rho$ dans $S_{J}=S_{2g}$. Par convention $C_{g}$ et $S_{g}$
 op\`erent \`a droite
 sur $J$ par permutation. Pour $\sigma\in S_{g}$ et tout $k\in \{1,\dots,g\}$ on a
  $k.\sigma=\sigma^{-1}(k)$
 et $(-k).\sigma=-\sigma^{-1}(k)$.
 
 On dispose du r\'esultat suivant de Dodson (\cite{Do} 1.1).

\begin{prop}\label{p4.1}
(a) On a une suite exacte 
$$
1\rightarrow (\ZZ/2\ZZ)^v\rightarrow \Fg\rightarrow \Fg_{0}\rightarrow 1.
$$
Dans cette suite $(\ZZ/2\ZZ)^v$ est identifi\'e au sous-groupe 
$\Gal(E^c/F^c)$ de $\Fg$ et $\Fg_{0}:=\Gal(F^c/\QQ)$. On a toujours
$v\ge 1$ car le groupe engendr\'e par $\rho$ est un sous-groupe
de $(\ZZ/2\ZZ)^v$.

(b) Le groupe $\Fg$ muni de son action sur $J$ s'identifie
\`a un sous-groupe de $C_{g}$. Le groupe $\Fg_{0}$ s'identifie
\`a un sous-groupe de $S_{g}$, il agit transitivement sur $\{1,\dots,g\}$
et il agit sur le sous-groupe $(\ZZ/2\ZZ)^v\subset  (\ZZ/2\ZZ)^g$
de $C_{g}$ par permutation des coordonn\'ees. 

(c) On peut  \'ecrire
\begin{equation}\label{imp}
\Fg=\cup_{\sigma\in \Fg_{0}}(\ZZ/2\ZZ)^v (s(\sigma), \sigma)\subset C_{g}
\end{equation}
o\`u $(s(\sigma),\sigma)$ est un rel\'evement arbitraire de $\sigma$.
Soit $j:  (\ZZ/2\ZZ)^g \rightarrow  (\ZZ/2\ZZ)^g/(\ZZ/2\ZZ)^v$
la surjection canonique. Alors $js:\Fg_{0}\rightarrow (\ZZ/2\ZZ)^g/(\ZZ/2\ZZ)^v$
est un $1$-cocycle.
\end{prop}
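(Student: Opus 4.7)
Mon plan est de traiter s\'epar\'ement les trois parties de la proposition, qui d\'ecoulent toutes de la th\'eorie de Galois \'el\'ementaire et de la structure des corps CM. Pour (a), je commence par noter que $F^c\subset E^c$ puisque l'inclusion $F\subset E$ implique que les conjugu\'es de $F$ sont contenus dans le compositum des conjugu\'es de $E$. La suite exacte en d\'ecoule par th\'eorie de Galois. Pour identifier $\Gal(E^c/F^c)$ \`a $(\ZZ/2\ZZ)^v$, j'observe que $E^c$ est le compositum sur $F^c$ des corps $F^c\cdot\tau(E)$ pour $\tau\in\Fg$; chaque facteur est de degr\'e au plus $2$ sur $F^c$ car $\tau(E)/\tau(F)$ est quadratique et $\tau(F)\subset F^c$. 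Ainsi $\Gal(E^c/F^c)$ est un $2$-groupe ab\'elien \'el\'ementaire. Comme $F^c$ est totalement r\'eel, $\rho$ le fixe et est non-trivial sur $E^c$, donc $\rho\in\Gal(E^c/F^c)$ force $v\geq 1$.

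Pour (b), l'observation cl\'e est que l'action de $\Fg$ sur $J$ commute avec l'involution induite par la conjugaison intrins\`eque $c$ de $E$ (l'\'el\'ement non-trivial de $\Gal(E/F)$): $c$ op\`ere sur $J$ par pr\'e-composition $\phi\mapsto \phi\circ c$, tandis que $\Fg$ op\`ere par composition de l'autre c\^ot\'e, et ces op\'erations commutent par associativit\'e. L'image de $\Fg$ dans $S_{2g}$ est donc contenue dans le centralisateur $C_g$ de cette involution, et la fid\'elit\'e de l'action vient de ce que $E^c$ est engendr\'e par les images des plongements de $J$. L'action induite de $\Fg_{0}$ sur $\Hom(F,\oQ)\simeq\{1,\dots,g\}$ est fid\`ele et transitive car $F$ est un corps. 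Enfin, $(\ZZ/2\ZZ)^g\subset C_g$ \'etant le noyau de $C_g\to S_g$, la conjugaison par un rel\`evement d'un \'el\'ement de $\Fg_{0}$ agit sur $(\ZZ/2\ZZ)^g$ par permutation des coordonn\'ees (c'est la d\'efinition m\^eme du produit semi-direct); la normalit\'e de $(\ZZ/2\ZZ)^v=\Gal(E^c/F^c)$ dans $\Fg$ assure la stabilit\'e de ce sous-groupe sous l'action.

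Pour (c), la d\'ecomposition en classes r\'esulte imm\'ediatement de la suite exacte du (a). Pour v\'erifier que $js$ est un 1-cocycle, j'utilise la loi du produit semi-direct dans $C_g$: le produit $(s(\sigma_1),\sigma_1)(s(\sigma_2),\sigma_2)$ vaut $(s(\sigma_1)+\sigma_1\cdot s(\sigma_2),\sigma_1\sigma_2)$ et appartient \`a la classe $(\ZZ/2\ZZ)^v\cdot(s(\sigma_1\sigma_2),\sigma_1\sigma_2)$, d'o\`u la congruence $s(\sigma_1)+\sigma_1\cdot s(\sigma_2)\equiv s(\sigma_1\sigma_2)\pmod{(\ZZ/2\ZZ)^v}$, qui est exactement la condition de 1-cocycle pour $js$ \`a valeurs dans le $\Fg_{0}$-module $(\ZZ/2\ZZ)^g/(\ZZ/2\ZZ)^v$. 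Le point le plus d\'elicat n'est pas tant un obstacle qu'une question de conventions: il faut fixer soigneusement les actions \`a gauche et \`a droite ainsi que les identifications entre $J$ vu comme ensemble de plongements et comme ensemble de classes, pour que toutes les compatibilit\'es avec la structure de produit semi-direct de $C_g$ soient transparentes.
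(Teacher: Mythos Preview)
Your argument is correct and follows the natural line of reasoning. Note, however, that the paper itself does not supply a proof of this proposition: it is stated as a result of Dodson (\cite{Do}~1.1) and invoked without justification. What you have written is essentially a condensed version of Dodson's original argument --- the identification of $\Gal(E^c/F^c)$ with an elementary abelian $2$-group via the compositum of the quadratic extensions $F^c\cdot\tau(E)/F^c$, the inclusion $\Fg\subset C_g$ coming from the commutation of the Galois action with the pairing $\phi\mapsto\bar\phi$ on $J$, and the cocycle relation from the group law in the semi-direct product. One small remark: your observation that the involution $\phi\mapsto\phi\circ c$ (pre-composition with the intrinsic conjugation of $E$) coincides with the paper's $\phi\mapsto\phi.\rho$ (post-composition with complex conjugation on $E^c$) is exactly right, since for any embedding $\phi$ of a CM field one has $\rho\circ\phi=\phi\circ c$; this is what makes the two descriptions of $C_g$ agree and your commutation argument go through.
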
 

 On note $\Fh=\Gal(E^c/E)$ et on pose
 $$
 \tilde{\Sigma}:=\{\alpha\in \Fg,\Fh \alpha\in \Sigma\}=\sqcup_{i=1}^g \Fh \alpha_{i}.
 $$

 On rappelle que le type CM $\Sigma$ est dit primitif si il ne provient pas
 d'un type CM sur un sous-corps CM stricte de $E$. Si $\Sigma$ est primitif
 alors
 $$
 \Fh=\{\alpha\in \Fg,\alpha\tilde{\Sigma}=\tilde{\Sigma}\}.
 $$
 
Soit
 $$
\Fh':=\{\alpha\in \Fg, \tilde{\Sigma}\alpha=\tilde{\Sigma} \}:=
\{\alpha\in \Fg,\alpha\tilde{\Sigma'}=\tilde{\Sigma'}\}
$$
avec $\tilde{\Sigma}':=\tilde{\Sigma}^{-1}=\{x^{-1},\ x\in \tilde{\Sigma}\}$.
Soit $E'$ le sous-corps de $E^c$ attach\'e \`a $\Fh'$ par la correspondance de
Galois. Alors $E'$ est le corps reflex de $(E,\Sigma)$, c'est un corps CM.
On pose $[E':\QQ]:=2g'$. Soit $\Sigma'$ l'image de 
$\tilde{\Sigma'}$ dans 
$$
\Hom(E',\oQ)=\Fh'\backslash \Fg.
$$
Alors $\Sigma'$ est un type CM primitif sur $E'$.
On dit que $(E',\Sigma')$ est le dual de $(E,\Sigma)$.
Si $(E,\Sigma)$ est primitif, alors $(E,\Sigma)$
est le dual de $(E',\Sigma')$.

Dans la description (\ref{imp}) de $\Fg$ comme sous-groupe de $C_{g}$,
le fixateur de $\Sigma$ est le sous-groupe des \'el\'ements de la forme
$(1,\sigma)$ de $\Fg$.  Soit 
$$
\Fg_{\Sigma}:=\{\sigma\in \Fg_{0}, \ s(\sigma)\in (\ZZ/2\ZZ)^v\}.
$$
Alors $\Fh'=\{(1,\sigma), \ \sigma\in \Fg_{\Sigma}\}$
En particulier on a
$$
[E':\QQ]=2g'=2^v [\Fg_{0}:\Fg_{\Sigma}].
$$

\begin{rem}\label{r4.2}
Si on change le type CM $\Sigma$ en $\Sigma. b$ pour $b\in (\ZZ/2\ZZ)^g$
cela revient \`a conjuguer $\Fg$ dans $S_{2g}$ par $b$. On change alors
la section $s(\sigma)$ par multiplication par le cobord $ b\sigma(b)(\ZZ/2\ZZ)^v$
dans la description de $\Fg$ comme sous-groupe de $C_{g}$ de la proposition
\ref{p4.1}--c. 
\end{rem}

On utilisera le r\'esultat suivant de Dodson (\cite{Do} 2.1.2).

\begin{prop}\label{dodson}
Soit $E$ un corps CM de degr\'e $2g$ sur $\QQ$. Le cocycle
$js:\Fg_{0}\rightarrow (\ZZ/2\ZZ)^g/(\ZZ/2\ZZ)^v$ de \ref{p4.1}--c est trivial si et seulement si
il existe un type CM, $\Sigma_{1}$ sur $E$ dont le corps reflex $E_{1}$ est de degr\'e $2^v$
sur $\QQ$.
\end{prop}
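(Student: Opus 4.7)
L'\'enonc\'e demande essentiellement de traduire la description de la proposition \ref{p4.1}--c et de la Remarque \ref{r4.2} en termes de cohomologie galoisienne. La cl\'e est la formule $[E':\QQ]=2^v[\Fg_{0}:\Fg_{\Sigma}]$ obtenue juste avant la Remarque \ref{r4.2}: le corps reflex de $(E,\Sigma_{1})$ est de degr\'e $2^v$ si et seulement si $\Fg_{\Sigma_{1}}=\Fg_{0}$, ce qui revient, par d\'efinition de $\Fg_{\Sigma_{1}}$, \`a demander que la section $s_{1}$ associ\'ee \`a $\Sigma_{1}$ dans la description (\ref{imp}) v\'erifie $s_{1}(\sigma)\in (\ZZ/2\ZZ)^v$ pour tout $\sigma\in \Fg_{0}$; autrement dit, $js_{1}$ est le cocycle identiquement nul (et pas seulement un cobord).

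Pour le sens ($\Leftarrow$), je partirais d'un tel $\Sigma_{1}$ et j'\'ecrirais $\Sigma_{1}=\Sigma\cdot b$ pour un certain $b\in (\ZZ/2\ZZ)^g$. D'apr\`es la Remarque \ref{r4.2}, la section $s_{1}$ s'obtient \`a partir de $s$ par multiplication, pour chaque $\sigma$, par $b\sigma(b)$ modulo $(\ZZ/2\ZZ)^v$; au niveau du cocycle, $js$ et $js_{1}$ diff\`erent donc par le cobord $\sigma\mapsto b\sigma(b)$ (on utilise que $(\ZZ/2\ZZ)^g/(\ZZ/2\ZZ)^v$ est de $2$-torsion, de sorte que $b\sigma(b)=b\sigma(b)^{-1}$). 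Comme $js_{1}\equiv 0$, on en d\'eduit que $js$ est un cobord, donc trivial dans $H^{1}(\Fg_{0},(\ZZ/2\ZZ)^g/(\ZZ/2\ZZ)^v)$.

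R\'eciproquement, si $[js]=0$, il existe $b\in (\ZZ/2\ZZ)^g$ tel que $js(\sigma)=b\sigma(b)$ modulo $(\ZZ/2\ZZ)^v$ pour tout $\sigma\in \Fg_{0}$. En posant $\Sigma_{1}:=\Sigma\cdot b$, la Remarque \ref{r4.2} assure que $js_{1}$ est la somme de $js$ et du cobord associ\'e \`a $b$, donc que $js_{1}\equiv 0$. Par la traduction initiale, cela entra\^\i ne $\Fg_{\Sigma_{1}}=\Fg_{0}$ et le corps reflex $E_{1}$ est bien de degr\'e $2^v$ sur $\QQ$.

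L'argument ne pr\'esente pas d'obstacle de fond: le seul point qui demande de l'attention est d'ajuster soigneusement les conventions (notation additive ou multiplicative dans $(\ZZ/2\ZZ)^g$, action \`a droite vs \`a gauche de $\Fg_{0}$ sur cet espace, et signe dans la d\'efinition du cobord) afin que l'identification ``changement de type CM $=$ multiplication par un cobord'' de la Remarque \ref{r4.2} prenne effectivement la forme d'une action de $B^{1}(\Fg_{0},(\ZZ/2\ZZ)^g/(\ZZ/2\ZZ)^v)$ sur l'ensemble des cocycles $js$. Une fois ce dictionnaire fix\'e, l'\'enonc\'e est imm\'ediat \`a partir de la description explicite de $\Fg$ comme sous-groupe de $C_{g}$.
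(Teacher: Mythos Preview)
Your proposal is correct. Note, however, that the paper does not actually supply its own proof of this proposition: it simply attributes the result to Dodson (\cite{Do}, 2.1.2) and moves on. What you have written is precisely the argument that the surrounding material (Proposition~\ref{p4.1}--c, the degree formula $[E':\QQ]=2^v[\Fg_0:\Fg_\Sigma]$, and especially Remarque~\ref{r4.2}) was set up to make transparent. The paper's post-proposition remark that the triviality of the cocycle class is independent of the choice of CM type is exactly the observation underlying your use of Remarque~\ref{r4.2}, so your proof is the intended one and there is nothing to compare.
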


Remarquer que la nullit\'e du cocycle $js$ est ind\'ependante du choix d'un type
CM $\Sigma$ au vu de la remarque pr\'ec\'edente. Noter que dans ce cas 
l'image de $js$ dans $H^2(\Fg_{0},(\ZZ/2\ZZ)^v)=Ext^1(\Fg_{0},(\ZZ/2\ZZ)^v)$
est nul. La suite exacte  de la proposition \ref{p4.1}-a est alors scind\'ee.
 Nous utiliserons
la proposition pr\'ec\'edente dans le cas suivant

\begin{lem}\label{l4.4}
Soit $E$ un corps CM de degr\'e $2g$ contenant un corps quadratique imaginaire $E'$.
Alors $v=1$ et le cocycle $js$ est trivial.
\end{lem}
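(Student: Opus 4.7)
The plan is to exploit that the imaginary quadratic subfield $E'\subset E$ is itself Galois over $\QQ$. Writing $F$ for the maximal totally real subfield of $E$, since $E'$ is not totally real we have $F\cap E'=\QQ$, so $[F\cdot E':\QQ]=2g=[E:\QQ]$ and hence $E=F\cdot E'$. I would then show $E^c=F^c\cdot E'$: the right-hand side is Galois over $\QQ$ as a compositum of two Galois extensions and contains $E$, while conversely $E^c$ contains every Galois conjugate of $F$, hence $F^c$, and of course it contains $E'$. Since $F^c$ is totally real and $E'$ is imaginary, $E'\not\subset F^c$, whence $[E^c:F^c]=2$; by the definition in Proposition \ref{p4.1}(a) this is exactly $v=1$.

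For the triviality of the cocycle $js$, I would invoke Proposition \ref{dodson}, which reduces the question to exhibiting a CM-type $\Sigma_1$ on $E$ whose reflex field has degree $2=2^v$ over $\QQ$. The natural choice is the CM-type lifted from $E'$: fix one of the two embeddings $\tau_0\colon E'\hookrightarrow\oQ$ and let $\Sigma_1$ be the set of $\sigma\in\Hom(E,\oQ)$ whose restriction to $E'$ equals $\tau_0$. Setting $\FK:=\Gal(E^c/E')$ (so that $\Fh\subset\FK$ because $E'\subset E$), and choosing the reference embedding so that $\tau_0$ corresponds to the trivial coset in $\FK\backslash\Fg=\Hom(E',\oQ)$, a short unpacking of the definitions yields $\tilde{\Sigma}_1=\FK$. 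The reflex stabilizer $\{\beta\in\Fg:\tilde{\Sigma}_1\beta=\tilde{\Sigma}_1\}$ is then $\FK$ itself, so the reflex field is $(E^c)^{\FK}=E'$, of degree $2$.

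The only step calling for any genuine verification is the group-theoretic identification $\tilde{\Sigma}_1=\FK$; everything else is bookkeeping, with $v=1$ coming from the Galois-closure computation and the cocycle triviality from a direct application of Proposition \ref{dodson}.
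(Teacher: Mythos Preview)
Your argument is correct and follows essentially the same route as the paper: first establish $E=FE'$ and $E^c=F^cE'$ to obtain $v=1$, then lift a CM-type from $E'$ to $E$ and compute that its reflex field is $E'$ itself, concluding via Proposition~\ref{dodson}. Your identification $\tilde{\Sigma}_1=\FK=\Gal(E^c/E')$ is exactly the computation underlying the paper's terser assertion that $\tilde{\Sigma}=\tilde{\Sigma}^{-1}$ is a subgroup with fixed field $E'$; you have simply spelled it out more carefully.
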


Dans cette situation $E=FE'$ et $E^c=F^cE'$ donc $[E^c:F^c]=2$ et $v=1$.
Soit $\Sigma'$ un type CM  sur $E'$ et $\Sigma$ 
son extension \`a  $E$. Avec les notations pr\'ec\'edentes $\tilde{\Sigma}=\Fh=\tilde{\Sigma}^{-1}$.
On en d\'eduit que  $E'$ est le corps reflex de $(E,\Sigma)$ et que
le cocycle $js$ est trivial, d'apr\`es la proposition \ref{dodson}.

\subsection{Tores de multiplication complexe, le cas du groupe symplectique.}

Fixons la forme bilin\'eaire altern\'ee $\psi_{g}$ de matrice 
$$
J_g=\left(\matrix{
0 &-1_g\cr
1_g &0\cr}\right)
$$
sur $\QQ^{2g}$ et notons $G=\GSp_{2g}$ le groupe de similitudes symplectiques
associ\'e. On garde les notations de la section pr\'ec\'edente concernant le corps de
multiplication complexe $E$. On peut trouver un \'el\'ement $\iota\in E$
tel que $\iota^{\rho}=-\iota$. Ainsi $E$ est muni de la forme $\QQ$-lin\'eaire
altern\'ee
$$
<x,y>=Tr_{E/\QQ}(x^{\rho}\iota y).
$$
On peut alors fixer un isomorphisme symplectique 
$(E,<\ ,\ >)\simeq (\QQ^{2g},\psi_{g})$.

Soit $R_{E}:=\Res_{E/\QQ} \GG_{m,E}$. L'espace des caract\`eres $X^*(R_{E})$
s'\'ecrit alors
$$
X^*(R_{E})=\oplus_{\phi\in \Hom(E,\oQ)}\ZZ \phi=\oplus_{\alpha\in \Fh\backslash \Fg }\ZZ \Fh\alpha.
$$
Il sera utile de l'\'ecrire de la mani\`ere suivante qui fait intervenir le type CM $\Sigma$:
$$
X^*(R_{E})=\oplus_{i=1}^g \ZZ [\alpha_{i}]
\oplus \oplus_{i=1}^g  \ZZ[\overline{\alpha_{i}}]=\oplus_{i=1}^g \ZZ [\alpha_{i}]
\oplus \oplus_{i=1}^g  \ZZ[\alpha_{-i}]
$$
o\`u l'on a not\'e $[\alpha_{i}]$ l'\'el\'ement $\Fh \alpha_{i}$
du type CM $\Sigma\subset \Hom(E,\oQ)$
et $[\alpha_{-i}]=[\overline{\alpha_{i}}]$ l'\'el\'ement $\Fh \alpha_{i}\rho$
de $\Sigma^{\rho}$.  On a une description identique pour le r\'eseau 
$X_{*}(R_{E})$ des cocaract\`eres de $R_{E}$.

Soit $U_{E}$ le sous-tore de $R_{E}$ d\'efini par
$$
U_{E}:=\{x\in R_{E}, xx^{\rho}=1\}. 
$$
Soit  $GU_{E}$ le sous-tore
de $R_{E}$ en gendr\'e par $U_{E}$ et $\GG_{m,\QQ}\subset R_{E}$. 
Le tore $GU_{E}$ s'identifie \`a un tore maximal de $G$.

La th\'eorie de Deligne construit un param\`etre de Hodge
$h:\SSS\rightarrow R_{E}\otimes \RR$ se factorisant par $GU_{E}\otimes \RR\subset G_{\RR}$.

Le module des cocaract\`eres $X_{*}(GU_{E})$ de $GU_{E}$ est le sous-module de $X_{*}(R_{E})$
qui se d\'ecrit par
$$
X_{*}(GU_{E})=\{\sum_{i=1}^g n_{i} [\alpha_{i}]+n_{-i}[\alpha_{-i}], \ n_{i}+n_{-i}=n_{j}+n_{-j}
\mbox{ pour tout $i$, $j$}\}.
$$

Le cocaract\`ere $\mu=\mu_{h}$ de $GU_{E}$ associ\'e au param\`etre de Hodge
est dans cette description $\mu=\sum_{i=1}^g[\alpha_{i}]$. Si on note
$e_{i}=[\alpha_{i}]-[\alpha_{-i}]$ on a 
$$
X_{*}(GU_{E})=\ZZ \mu\oplus \oplus_{i=1}^g\ZZ e_{i}.
$$

On rappelle que $E'$ d\'esigne le corps reflex de $(E,\Sigma)$
et que $\Fh'$ est le sous-groupe de $\Fg$ associ\'e \`a $E'$ par la correspondance de Galois. Le morphisme
de r\'eciprocit\'e 
\begin{equation}
r:R_{E'}\rightarrow R_{E}
\end{equation}
se factorise par $GU_{E}$.
Il se d\'ecrit au niveau des cocaract\`eres de la mani\`ere suivante.
$$
X_{*}(r): X_{*}(R_{E'})= \oplus_{\beta\in \Fh'\backslash \Fg }\Fh'\beta \longrightarrow X_{*}(R_{E})
$$ 
$$
\Fh' \beta\mapsto \sum_{i=1}^g \Fh \alpha_{i}\beta=\sum_{i=1}^g \Fh \alpha_{i.\beta}=\sum_{i=1}^g [\alpha_{i.\beta}]
$$
o\`u $\beta$ agit sur $\{\pm 1,\pm 2,\dots,\pm g\}$ via la description de $\Fg$ donn\'e \`a la proposition \ref{p4.1}.

On d\'efinit le sous-module $L_{\mu}:= X_{*}(r) (X_{*}(R'_{E}))$ de $X_{*}(GU_{E})$.
Soit $L'_{\mu}:=(L_{\mu}\otimes \QQ)\cap X_{*}(GU_{E})$. Alors $L'_{\mu}$
est un sous-$\ZZ$-module galoisien satur\'e de $X_{*}(GU_{E})$.
Le sous-tore  $M=MT(\mu)$ de $GU_{E}$ associ\'e \`a $L'_{\mu}$ est le groupe de
Mumford-Tate de $\mu$ (ou de $h$). Par d\'efinition $M$ est le plus petit $\QQ$-sous-tore
de $GU_{E}$ tel que $\mu_{\CC}$ se factorise par $M_{\CC}$. Le lemme suivant est alors
une cons\'equence de l'\'equivalence de cat\'egories entre la cat\'egorie des tores alg\'ebriques
et celle des $\ZZ$-modules  galoisiens libres de rang fini:

\begin{lem}
Le morphisme de r\'eciprocit\'e $r$ est  \`a noyau connexe si et seulement si 
$L_{\mu}=L'_{\mu}$.
\end{lem}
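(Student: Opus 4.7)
The plan is to derive the lemma directly from the anti-equivalence of categories between algebraic $\QQ$-tori and finitely generated free $\ZZ$-modules equipped with a continuous action of $\Gal(\oQ/\QQ)$, using the cocharacter functor $X_{*}$. Under this dictionary, connectedness of the kernel of a morphism of tori translates into a saturation property of the image lattice, and the desired equivalence will then be immediate from the definitions of $L_{\mu}$ and $L'_{\mu}$.

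The central ingredient is the following general fact. For any morphism $f\colon T_{1}\to T_{2}$ of $\QQ$-tori, the kernel $\ker(f)$ is connected if and only if the image $X_{*}(f)(X_{*}(T_{1}))$ is a saturated sublattice of $X_{*}(T_{2})$, equivalently, the cokernel of $X_{*}(f)$ is torsion-free. Indeed, factoring $f$ as a surjection of $T_{1}$ onto its image $\IM(f)$ followed by the closed immersion $\IM(f)\hookrightarrow T_{2}$, one reads on characters that $X^{*}(\ker f)$ is the cokernel of $X^{*}(f)\colon X^{*}(T_{2})\to X^{*}(T_{1})$. Since $X^{*}(\ker f)$ is the character module of the diagonalizable group $\ker(f)$, this kernel is connected precisely when that cokernel is torsion-free; and by Smith normal form applied to $X_{*}(f)$ and to its $\ZZ$-linear dual $X^{*}(f)$, the cokernels of $X_{*}(f)$ and of $X^{*}(f)$ share the same invariant factors, hence the same torsion part.

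Applied to $r\colon R_{E'}\to R_{E}$, this criterion says that $\ker(r)$ is connected if and only if $L_{\mu}=X_{*}(r)(X_{*}(R_{E'}))$ is saturated inside $X_{*}(R_{E})$. To finish, I would observe that $GU_{E}\hookrightarrow R_{E}$ is a closed immersion of tori, so the cocharacter lattice $X_{*}(GU_{E})$ is itself saturated in $X_{*}(R_{E})$; consequently, for any sublattice $L\subset X_{*}(GU_{E})$ one has $(L\otimes\QQ)\cap X_{*}(R_{E})=(L\otimes\QQ)\cap X_{*}(GU_{E})$. Taking $L=L_{\mu}$ and using the very definition $L'_{\mu}=(L_{\mu}\otimes\QQ)\cap X_{*}(GU_{E})$, we conclude that $L_{\mu}$ is saturated in $X_{*}(R_{E})$ if and only if $L_{\mu}=L'_{\mu}$, which is the statement of the lemma.

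I expect no real difficulty in this proof: the whole argument is a formal unwinding of the cocharacter dictionary, and the only point where a small amount of care is needed is in the comparison between saturation with respect to $X_{*}(R_{E})$ and saturation with respect to $X_{*}(GU_{E})$, which is itself handled by the general principle that subtori correspond to saturated sublattices of cocharacters.
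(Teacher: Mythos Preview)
Your proof is correct and follows the same approach as the paper, which simply states that the lemma is a consequence of the equivalence of categories between algebraic tori over $\QQ$ and finitely generated free $\ZZ$-modules with Galois action; you have just spelled out the details of that deduction. One small terminological slip: the cocharacter functor $X_{*}$ is covariant and so furnishes an equivalence rather than an anti-equivalence, but this does not affect the substance of your argument.
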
 

Voici un crit\`ere simple qui assure la connexit\'e de $r$.

\begin{prop}\label{p4.3}
Si dans la suite exacte
$$
1\rightarrow (\ZZ/2\ZZ)^v\rightarrow \Fg\rightarrow \Fg_{0}\rightarrow 1
$$
de la proposition \ref{p4.1} on a $v=g$, alors $L_{\mu}=X_{*}(GU_{E})$
et $r$ est \`a noyau connexe. 
\end{prop}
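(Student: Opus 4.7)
The plan is to verify directly that $L_{\mu} = X_{*}(GU_{E})$ by producing the basis $\{\mu, e_{1}, \ldots, e_{g}\}$ inside the image of $X_{*}(r)$, and then invoking the lemma stated just before the proposition. Recall that $X_{*}(GU_{E}) = \ZZ\mu \oplus \bigoplus_{i=1}^{g} \ZZ e_{i}$ with $\mu = \sum_{i=1}^{g}[\alpha_{i}]$ and $e_{i} = [\alpha_{i}] - [\alpha_{-i}]$, and that the inclusion $L_{\mu} \subset X_{*}(GU_{E})$ is automatic from the fact that $r$ factors through $GU_{E}$.

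The hypothesis $v = g$ means that in the description (\ref{imp}) of $\Fg$ as a subgroup of $C_{g} = (\ZZ/2\ZZ)^{g} \rtimes S_{g}$, the full subgroup $(\ZZ/2\ZZ)^{g} = (\ZZ/2\ZZ)^{v}$ is contained in $\Fg$. In particular, for each $k \in \{1,\ldots,g\}$, the elementary generator $\rho_{k} \in (\ZZ/2\ZZ)^{g}$ that swaps $k$ with $-k$ and fixes every other element of $J = \{\pm 1,\ldots,\pm g\}$ belongs to $\Fg$. Hence the cosets $\Fh'\cdot 1$ and $\Fh'\rho_{k}$ are well-defined elements of $\Fh'\backslash\Fg$, and we may evaluate $X_{*}(r)$ on them.

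The crux is a short direct computation using the formula $X_{*}(r)(\Fh'\beta) = \sum_{i=1}^{g}[\alpha_{i.\beta}]$. Taking $\beta = 1$ yields $X_{*}(r)(\Fh') = \mu$. Taking $\beta = \rho_{k}$, one has $i.\rho_{k} = i$ for $i \neq k$ and $k.\rho_{k} = -k$, so
$$
X_{*}(r)(\Fh'\rho_{k}) \;=\; \sum_{i \neq k}[\alpha_{i}] + [\alpha_{-k}] \;=\; \mu - e_{k}.
$$
Subtracting shows that every $e_{k}$ lies in $L_{\mu}$; combined with $\mu \in L_{\mu}$ this gives $L_{\mu} \supset \ZZ\mu \oplus \bigoplus_{k}\ZZ e_{k} = X_{*}(GU_{E})$, hence equality.

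Once $L_{\mu} = X_{*}(GU_{E})$ is established, the saturation $L'_{\mu} = (L_{\mu}\otimes\QQ)\cap X_{*}(GU_{E})$ coincides trivially with $L_{\mu}$, and the preceding lemma concludes that the kernel of $r$ is connected. There is no real obstacle in this argument: everything reduces to correctly reading off the right action of $\rho_{k} \in C_{g}$ on $J$ and substituting into the explicit description of $X_{*}(r)$. The only conceptual point worth stressing is why $v = g$ is precisely the condition that makes \emph{all} sign-swaps available in $\Fg$, which is exactly what is needed to separate $[\alpha_{k}]$ from $[\alpha_{-k}]$ inside the image.
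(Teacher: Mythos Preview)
Your proof is correct and follows essentially the same route as the paper: when $v=g$ the sign-swaps $\tau_k=(k,-k)$ all lie in $\Fg$, and the paper computes $\mu-\mu.\tau_k=e_k\in L_\mu$, which is exactly your computation $X_*(r)(\Fh')-X_*(r)(\Fh'\rho_k)=e_k$ written in slightly different notation. The only difference is that you spell out $\mu\in L_\mu$ explicitly and invoke the lemma at the end, whereas the paper leaves both implicit.
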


{\it Preuve.} Dans cette situation le groupe $\Fg$ contient les transpositions
$\tau_{k}=(k,-k)$ pour $k\in \{1,\dots,g\}$. On en d\'eduit que $L_{\mu}$ contient
$\mu-\mu.\tau_{k}=e_{k}$. Donc $L_{\mu}=L'_{\mu}=X_{*}(GU_{E})$ et 
$r$ est \`a noyau connexe. 

Ce r\'esultat pr\'ecise  un \'enonc\'e  de Clozel et du  premier auteur ( \cite{CU}, sec. 3.2)
o\`u le cas o\`u $\Fg=C_{g}$ est obtenu. Noter que ce dernier cas, appel\'e ``Galois g\'en\'erique''
dans \cite{CU} est le cas g\'en\'erique comme expliqu\'e dans (\cite{CU}, sec. 2). 

\begin{prop}\label{p4.7}
Si $g\leq 3$ le morphisme de r\'eciprocit\'e $r$ est \`a noyau connexe. 
\end{prop}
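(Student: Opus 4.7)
\textit{Esquisse de preuve.} Mon plan est de ramener la proposition \`a un nombre fini de situations explicites en utilisant la structure de $\Fg$ donn\'ee par la proposition \ref{p4.1}, puis de calculer $L_\mu$ dans chacune.

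La proposition \ref{p4.3} r\'eglant le cas $v=g$, je peux supposer $v<g$. Le sous-groupe $(\ZZ/2\ZZ)^v$ de $(\ZZ/2\ZZ)^g$ contient $\rho=(1,\ldots,1)$ et est stable par l'action par permutation de $\Fg_0$, lequel agit transitivement sur $\{1,\ldots,g\}$. Pour $g\leq 3$, on v\'erifie que les seules possibilit\'es pour un tel sous-groupe sont $(\ZZ/2\ZZ)^g$ lui-m\^eme et la diagonale $\Delta=\langle\rho\rangle$~: pour $g=3$, les trois hyperplans de $(\ZZ/2\ZZ)^3$ contenant $(1,1,1)$ forment une orbite transitive sous $A_3$, donc aucun n'est globalement stable sous $A_3$ ou $S_3$. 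Il ne reste qu'\`a traiter le cas $v=1$, $(\ZZ/2\ZZ)^v=\Delta$.

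La structure de $\Fg\subset C_g$ au-dessus de $\Fg_0$ est alors d\'etermin\'ee par la classe du cocycle $js$ dans $H^1(\Fg_0,(\ZZ/2\ZZ)^g/\Delta)$. Pour $g=2$ et $\Fg_0=S_2$, on a $(\ZZ/2\ZZ)^2/\Delta\simeq\FF_2$ avec action triviale, et $H^1(S_2,\FF_2)=\FF_2$ donne deux cas~: $\Fg\simeq(\ZZ/2\ZZ)^2$ (cocycle nul) ou $\Fg\simeq\ZZ/4\ZZ$ (cocycle non nul). Pour $g=3$, $\Fg_0\in\{A_3,S_3\}$ agit par la repr\'esentation standard sur $(\ZZ/2\ZZ)^3/\Delta\simeq\FF_2^2$~; on a $H^1(A_3,\FF_2^2)=0$ par coprimalit\'e, et la suite d'inflation-restriction pour $A_3\subset S_3$, combin\'ee avec $(\FF_2^2)^{A_3}=0$, donne $H^1(S_3,\FF_2^2)=0$. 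En dimension $g=3$ la situation est donc toujours scind\'ee, $\Fg=\Delta\times\Fg_0$.

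Il resterait \`a calculer $L_\mu$ explicitement. Dans les cas scind\'es o\`u $\Fg=\Delta\times\Fg_0$, un calcul direct montre que les \'el\'ements $\mu.\beta$ pour $\beta\in\Fg$ sont tous \'egaux \`a $\mu$ ou \`a $\overline{\mu}$, d'o\`u $L_\mu=\ZZ\mu+\ZZ(\mu-\overline{\mu})$, clairement satur\'e dans $X_*(GU_E)$. Dans le cas non scind\'e pour $g=2$, les deux rel\`evements de la transposition dans $\Fg$ produisent des \'el\'ements $\mu.\beta$ suppl\'ementaires dont la diff\'erence avec $\mu$ fournit les g\'en\'erateurs $e_1$ et $e_2$, si bien que $L_\mu=X_*(GU_E)$ tout entier. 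Le point le plus d\'elicat est le calcul cohomologique en $g=3$, qui \'ecarte l'\'eventualit\'e d'une structure non scind\'ee plus difficile \`a analyser.
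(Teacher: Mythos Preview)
Your overall strategy is sound and the cases $g\le 2$ are handled correctly. For $g=3$, however, there is a genuine gap in the passage from ``$H^1(\Fg_0,(\ZZ/2\ZZ)^3/\Delta)=0$'' to ``$\Fg=\Delta\times\Fg_0$ as a subgroup of $C_3$''. What the vanishing of $H^1$ gives is only that the cocycle $js$ attached to the \emph{given} CM type $\Sigma$ is a coboundary: there exists $b\in(\ZZ/2\ZZ)^3$ with $js(\sigma)=j(b\sigma(b))$. The embedding $\Fg\subset C_3$ --- and hence the $\Fg$-orbit of $\mu$ --- is governed by $js$ itself, not by its cohomology class; by Remark~\ref{r4.2}, replacing $js$ by a cohomologous cocycle amounts precisely to \emph{changing the CM type}, which you are not free to do. Your claim ``$\mu.\beta\in\{\mu,\overline\mu\}$ for all $\beta\in\Fg$'' is valid only when $js$ is literally zero, i.e.\ when $b\in\Delta$. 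When $b\notin\Delta$ --- and this occurs, since for $g=3$ the group of coboundaries $B^1$ is nontrivial ($\Fg_0$ acts without nonzero fixed points on $(\ZZ/2\ZZ)^3/\Delta$) --- the orbit of $\mu$ is strictly larger and your description of $L_\mu$ is wrong. (For $g=2$ the issue does not arise: $S_2$ acts trivially on $(\ZZ/2\ZZ)^2/\Delta$, so $B^1=0$ and cohomology classes coincide with cocycles, which is why your dichotomy there is complete.)

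The paper proceeds exactly by making this case distinction. It also establishes that $js$ is a coboundary (via a signature argument and Dodson's criterion, Proposition~\ref{dodson}), writes $s(\sigma)=b\sigma(b)$, and then separates $b\in\Delta$ from $b\notin\Delta$. In the first case one finds $L_\mu=\ZZ\mu\oplus\ZZ(e_1+e_2+e_3)=L'_\mu$, as you do. In the second case, a short direct computation with the $3$-cycle $\sigma_0=(1,2,3)$ shows that $\rho\, b\sigma_0^{-1}(b)$ and $\rho\, b\sigma_0^{-2}(b)$ are single transpositions $(i,-i)$ and $(j,-j)$ with $i\neq j$; hence $L_\mu$ contains $e_i$, $e_j$ and $e_1+e_2+e_3$, so $L_\mu=X_*(GU_E)$. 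It is this second case that your argument is missing.
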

{\it Preuve.} Le cas $g=1$ est bien connu. On a alors $E=E'$ et $r$ est un isomorphisme.

Dans le cas  $g=2$, si  $v=2$, on peut appliquer la proposition \ref{p4.3}. On peut donc supposer
que $v=1$. Le groupe $\Fg_{0}=S_{2}$ est engendr\'e par la transposition $\sigma=(1,2)$.
Le choix de $s$ n'\'etant bien d\'efini qu'\`a multiplication par $\rho$ pr\`es, on peut  supposer
que $s(\sigma)=Id$ o\`u que $s(\sigma)$ est la transposition $(1,-1)$. Dans le premier cas
$L_{\mu}=\ZZ\mu\oplus \ZZ (e_{1}+e_{2})=L'_{\mu}$ et dans le second 
$L_{\mu}=X_{*}(GU_{E})$.  Noter que si le type CM $\Sigma$ est primitif, le r\'esultat de
Ribet (\cite{Ri}, 3.5) nous assure que le rang de $L_{\mu}$ est $3$ donc que le premier cas
n'intervient pas.

Pour $g=3$ on peut comme pr\'ec\'edemment supposer que $1\leq v<3$. 
Le cas $v=2$ est en fait exclu. En effet $\Fg_{0}$ est soit $S_{3}$ soit le groupe
altern\'ee $A_{3}$. Dans tous les cas il contient le trois-cycle $\sigma_{0}=(1,2,3)$.
Les seuls points fixes de $\sigma_{0}$ dans son action sur $(\ZZ/2\ZZ)^3$ sont
$Id$ et $\rho$. Comme $\sigma_{0}$ pr\'eserve $(\ZZ/2\ZZ)^v$ on trouve
que $v=2$ est impossible.   Pour $g=p$ un nombre premier arbitraire
cet argument montre que $p$ divise $2^{v}-2$.

Quand $v=1$, $(\ZZ/2\ZZ)^v=\{Id,\rho\}$ est central dans $\Fg$. dans cette situation
la suite exacte
$$
1\rightarrow \{Id,\rho\}\rightarrow \Fg\rightarrow \Fg_{0}\rightarrow 1
$$
est scind\'ee car $\rho$ est de signature $-1$ donc $\Fg\cap A_{2g}$ fournit un
scindage. La sous-extension de $E^c$ associ\'ee \`a $\Fg\cap A_{2g}$
est  un corps  quadratique imaginaire $\QQ[\sqrt{-\delta}]$ et intervient comme un corps reflex de $E$
pour un    type CM  de $E$. Par le corollaire 2.1.2 de \cite{Do}
on en d\'eduit que le cocycle $js:\Fg_{0}\rightarrow (\ZZ/2\ZZ)^g/ \{Id,\rho\}$
de la proposition \ref{p4.1} est trivial. Il existe donc $b\in  (\ZZ/2\ZZ)^g$ tel que 
$js(\sigma)=b\sigma(b)\{Id,\rho\}$ pour tout $\sigma\in \Fg_{0}$.
 On peut alors choisir $s(\sigma)=b\sigma(b)$.

Si $b\in \{Id,\rho\}$, $s(\sigma)=Id$ pour tout $\sigma\in \Fg_{0}$
alors $L_{\mu}$ est engendr\'e par $\mu$ et $\mu^{\rho}$.
On en d\'eduit que $L_{\mu}=\ZZ\mu\oplus\ZZ(e_{1}+e_{2}+e_{3})=L'_{\mu}$.
Dans cette situation le corps reflex est $\QQ[\sqrt{-\delta}]$ et le type
CM $\Sigma$ n'est pas primitif par le r\'esultat de Ribet (\cite{Ri}, 3.5).

Si $b\notin\{Id,\rho\}$, $(s(\sigma),\sigma)$ agit sur $\mu$
via l'action de $b.\sigma^{-1}(b)$. On remarque $b\sigma_{0}^{-1}(b)$
est une permutation paire non triviale. C'est donc un produit de $2$
transpositions. Donc  $\rho b\sigma_{0}^{-1}(b)=(i,-i)$ pour un $i\in \{1,2,3\}$
et $\rho b\sigma_{0}^{-2}(b)= \rho b \sigma_{0}(b)=(j,-j)$ avec $j\neq i$.
On en d\'eduit que $L_{\mu}$ contient $e_{i}=\mu-\mu.(i,-i)$ et $e_{j}$.
Comme il contient $\mu-\mu. \rho=e_{1}+e_{2}+e_{3}$, on trouve
que $L_{\mu}=X_{*}(GU_{E})$.

\begin{rem}
Le lecteur int\'er\'ess\'e par la complexit\'e combinatoire peut regarder la table des cas
possibles donn\'e par Dodson (\cite{Do} p.23) dans le cas $g=4$. On peut construire des
exemples de corps CM de degr\'e $8$ et de type CM $\Sigma$ tel que $L'_{\mu}/L_{\mu}\simeq \ZZ/2\ZZ$.
Par exemple on suppose que  $\Fg_{0}\simeq (\ZZ/2\ZZ)^2$ est le groupe de Klein engendr\'e par les doubles transpositions et si on
a une suite exacte scind\'e
$$
1\rightarrow \{1,\rho\}\rightarrow \Fg\rightarrow \Fg_{0}\rightarrow 1
$$
tel que $s(\sigma)=b\sigma(b)$ avec $b$ la transposition $(1,-1)$. Un calcul simple montre
que 
$$
L_{\mu}=\{\sum_{i=1}^4n_{i}e_{i}+r\mu\vert \  \sum_{i=1}^4 n_{i}\in 2\ZZ\} 
$$
qui est d'indice $2$ dans $L'_{\mu}=X_{*}(GU_{E})$.
En particulier $r$ n'est pas \`a noyau connexe dans ce cas.
\end{rem}

La proposition suivante montre que l'indice de $L_{\mu}$ dans $L'_{\mu}$ peut \^{e}tre divisible par des entiers arbitrairement
grands:

\begin{prop}
Soit $p$ un nombre premier impair. Il existe un corps CM $E$ de degr\'e $2p$ et un type CM sur $E$
tel que $L'_{\mu}/L_{\mu}\simeq \ZZ/(p-2)\ZZ$. 
\end{prop}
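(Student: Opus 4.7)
Le plan consiste \`a prendre pour $E$ une extension cyclique CM de $\QQ$ de degr\'e $2p$, puis \`a exploiter la libert\'e laiss\'ee par la remarque \ref{r4.2} dans le choix du type CM pour obtenir un $s$ particuli\`erement simple. Soit $F$ une extension cyclique totalement r\'eelle de $\QQ$ de degr\'e $p$ (un tel $F$ existe comme sous-corps d'un corps cyclotomique convenable) et $K$ un corps quadratique imaginaire. Posons $E = FK$ : c'est un corps CM de degr\'e $2p$, galoisien sur $\QQ$ de groupe cyclique $\Fg \simeq \ZZ/2p\ZZ$ dont l'unique \'el\'ement d'ordre $2$ est la conjugaison complexe $\rho$. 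Comme $E$ contient le corps quadratique imaginaire $K$, le lemme \ref{l4.4} garantit que $v = 1$ et que le cocycle $js$ est cohomologiquement trivial. En particulier, $\Fg_{0}$ est cyclique d'ordre $p$ ; en num\'erotant convenablement les \'el\'ements de $\Sigma$ et $\Sigma^{\rho}$, on peut supposer $\Fg_{0} = \langle \sigma_{0}\rangle$ avec $\sigma_0 = (1, 2, \ldots, p) \in S_p$.

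La trivialit\'e cohomologique de $js$ permet, via la remarque \ref{r4.2}, de remplacer $\Sigma$ par un type CM tel que la section $s$ associ\'ee soit donn\'ee par $s(\sigma) = b \sigma(b)$ pour un $b \in (\ZZ/2\ZZ)^p$ fix\'e au choix. Je prendrais $b = e_1$, ce qui donne $s(\sigma_0^k) = e_1 \sigma_0^k(e_1) = e_1 e_{k+1}$ (indices modulo $p$) pour $k = 1, \ldots, p-1$, puisque $\sigma_0$ permute cycliquement les coordonn\'ees. Un calcul direct dans $C_p$, dans l'esprit de ceux effectu\'es dans la preuve de la proposition \ref{p4.7} et dans la remarque qui la suit, donne
\[
(s(\sigma_0^k), \sigma_0^k)\cdot \mu = \mu - e_1 - e_{k+1}, \quad k = 1, \ldots, p-1,
\]
tandis que $\rho \cdot \mu = \mu - \sum_{i=1}^p e_i$ et que les $(\rho s(\sigma_0^k), \sigma_0^k)\cdot \mu = \mu - \sum_{i \neq 1, k+1} e_i$ sont d\'ej\`a des $\ZZ$-combinaisons des pr\'ec\'edents. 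Comme on a ainsi \'epuis\'e les $2p$ \'el\'ements de $\Fg$, on en d\'eduit que $L_\mu = \ZZ \mu \oplus N_\mu$, o\`u
\[
N_\mu = \Big\langle e_1 + e_2,\ e_1 + e_3,\ \ldots,\ e_1 + e_p,\ \sum_{i=1}^p e_i \Big\rangle \subset \bigoplus_{i=1}^p \ZZ e_i.
\]

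Le point cl\'e est alors l'identification de $L'_\mu / L_\mu$, qui co\"\i ncide avec la torsion de $X_{*}(GU_{E})/L_\mu$, c'est-\`a-dire avec $\bigoplus_{i=1}^p \ZZ e_i / N_\mu$. Chacune des relations $e_1 + e_i \equiv 0$ ($i \geq 2$) entra\^\i ne $e_i \equiv -e_1$, ce qui r\'eduit le quotient au groupe cyclique engendr\'e par la classe de $e_1$ ; il ne reste plus qu'\`a imposer $\sum_{i=1}^p e_i \equiv 0$, qui devient $e_1 - (p-1)e_1 = -(p-2)e_1 \equiv 0$. Le quotient est donc cyclique d'ordre $p-2$, d'o\`u $L'_\mu / L_\mu \simeq \ZZ/(p-2)\ZZ$. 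La partie la plus d\'elicate du plan est la v\'erification explicite des formules pour l'action de $(s(\sigma_0^k),\sigma_0^k)$ sur $\mu$ dans les bonnes conventions d'action de $C_p$ sur $X_{*}(R_{E})$, mais celle-ci se ram\`ene \`a suivre l'image de chaque $[\alpha_i]$ \`a travers la permutation sign\'ee consid\'er\'ee.
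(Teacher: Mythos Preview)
Your proposal is correct and follows essentially the same approach as the paper: the same construction $E=FK$ with $F$ cyclic totally real of degree $p$ and $K$ imaginary quadratic, the same appeal to Lemma~\ref{l4.4} and Remark~\ref{r4.2} to normalise the section as $s(\sigma)=b\sigma(b)$ with $b$ the transposition $(1,-1)$, and the same resulting generators $\mu$, $\sum_{i}e_i$, $e_1+e_i$ for $L_\mu$. You supply more detail than the paper on the final quotient computation, which the paper simply asserts.
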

{\it Preuve.}
Soit $F$ un corps totalement r\'eel qui est une extension galoisienne
de $\QQ$ de groupe $ \ZZ/p\ZZ$. De tels $F$ existent comme sous-extensions convenables
de corps cyclotomiques. Soit $K$ un corps quadratique imaginaire et $E=FK$.
Alors $E$ est un corps CM qui est Galoisien sur $\QQ$ de groupe $\ZZ/2\ZZ\times \ZZ/p\ZZ$.
Dans cette situation, pour tout type CM sur $E$, on a la suite exacte scind\'ee
$$
1\rightarrow \{1,\rho\}\rightarrow \Fg\rightarrow \ZZ/p\ZZ \rightarrow 1
$$
et par le lemme \ref{l4.4} le cocycle $js:\ZZ/p\ZZ\rightarrow (\ZZ/2\ZZ)^p/\ZZ/2\ZZ$
est trivial. Il existe donc $a\in (\ZZ/2\ZZ)^p$ tel que $s(\sigma)=a\sigma(a)$ 
pour tout $\sigma\in \ZZ/p\ZZ$. Apr\`es renum\'erotation de $\{1,\dots,p\}$,
on peut supposer que $\ZZ/p\ZZ$ est engendr\'e par le $p$-cycle $(1,2,\dots,p)$.
On peut par ailleurs
par un choix convenable du type CM sur $E$ utilisant  la remarque \ref{r4.2} supposer que
$a=(1,-1)$. Un calcul simple montre alors que $L_{\mu}$ est engendr\'e par 
$\mu$, $\sum_{k=1}^p e_{k}$ et les $(e_{1}+e_{i})$ avec $i\in \{2,\dots,p\}$. 
On v\'erifie alors que $L_{\mu}$ est de rang maximal $p+1$ et que $X^*(GU_{E})/L_{\mu}\simeq 
\ZZ/(p-2)\ZZ$.

\section{Points sp\'eciaux des vari\'et\'es de Shimura.}

Dans cette section on commence \`a aborder le probl\`eme de minoration des
orbites Galoisiennes de points sp\'eciaux dans les vari\'et\'es de Shimura.
On consid\`ere une don\'ee de Shimura $(G,X)$. On peut sans perte de g\'en\'eralit\'e 
supposer que $G$ est le groupe de Mumford-Tate g\'en\'erique de $X$.
On peut aussi supposer que $K$ est net. Avec ces hypoth\`eses $\Gamma := K \cap G(\QQ)$
agit sans points fixes sur $X$.
On peut aussi sans perte de g\'en\'eralit\'e ne s'int\'eresser qu'\`a des points de la composante $S$
de $\Sh_{K}(G,X)$ qui est l'image de $X^+\times \{ 1 \}$ dans $\Sh_K(G,X)$ (o\`u $X^+$ d\'esigne une composante connexe de $X$).
On fixe dans la suite une repr\'esentation fid\`ele $G\hookrightarrow \GL_n$.
Ceci permet de definir les mod\`eles entiers de $G$ et de ses sous-groupes alg\'ebriques.
On suppose \'egalement que $K$ est le produit $K=\prod_p K_p$ o\`u $K_p$ est un
sous-groupe compact ouvert de $G(\QQ_p)$.

Soit $(T,\{h\})\subset (G,X)$ une donn\'ee de Shimura sp\'eciale telle que
$T$ est le groupe de Mumford-Tate de $h$. On note $K_{T}=K\cap T(\AAA_{f})$.
La non maximalit\'e du sous-groupe compact ouvert $K_{T}$
contribue \`a la taile de l'orbite sous Galois du point sp\'ecial  $x=\overline{(h,1)}$.
Nous d\'ecrivons le r\'esultat pr\'ecis dans une premi\`ere partie puis
nous rappelons  dans une deuxi\`eme  des r\'esultats de 
Clozel et du premier auteur \cite{CU} concernant l'image des morphismes de r\'eciprocit\'e
au niveau des groupes de classes sous des hypoth\`eses de connexit\'e du noyau du
morphisme de r\'eciprocit\'e.  

\subsection{Passage de $K_{T}^{m}$ \`a $K_{T}$}\label{s5.1}
\begin{prop}\label{p5.1}
Soit $(G,X)$ une donn\'ee de Shimura. Soit $K$ un sous-groupe compact ouvert de 
$G(\AAA_f)$.

Soit $(T,\{h\})\subset (G,X)$ une donn\'ee de Shimura sp\'eciale et $L$ le corps de
d\'ecomposition de $T$. On suppose que $T$ est le groupe de Mumford-Tate de $h$.
Soit $K^m_T$ le sous-groupe compact ouvert maximal de $T(\AAA_f)$ et soit
$K_T = K \cap T(\AAA_f)$.
Soit $r \colon R_{L}:=\Res_{L/\QQ} \GG_{m,L} \lto T$ le morphisme de r\'eciprocit\'e et $U$ l'image de $r((\AAA_f\otimes L)^*)$
dans $T(\QQ)\backslash T(\AAA_f) / K^m_T$.

On a 
$$
|\Gal(\ol\QQ / L) \cdot \overline{(h,1)}| \gg B^{i(T)}|K_T^m/K_T| \cdot |U|
$$ 
o\`u $B$ est une constante uniforme et $i(T)$ d\'esigne le nombre de premiers $p$ tels que 
$K_{T,p}^m \neq K_{T,p}$.
\end{prop}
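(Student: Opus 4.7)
The orbit $O := \Gal(\ol\QQ/L)\cdot\ol{(h,1)}$ and its image $U$ in $T(\QQ)\backslash T(\AAA_f)/K_T^m$ both arise, by Shimura reciprocity, as the image of the subgroup $Z := r((\AAA_f\otimes L)^*)$ of $T(\AAA_f)$. Since $Z$ is a subgroup and $T$ is abelian, a direct double-coset computation gives
\[
|O| \;=\; |U|\cdot[Z\cap T(\QQ)K_T^m : Z\cap T(\QQ)K_T].
\]
The plan is to bound this index from below by $B^{i(T)}|K_T^m/K_T|$, up to a uniform multiplicative constant, by localizing at the $i(T)$ primes where $K_{T,p}\neq K_{T,p}^m$.

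First I would embed the index into the quotient $T(\QQ)K_T^m/T(\QQ)K_T \simeq K_T^m/K_T\,E_T$, where $E_T := T(\QQ)\cap K_T^m$ is the group of units of $T$. Since $\dim T$ is bounded by $\dim G$, the order of $E_T$ is uniformly bounded, so $|K_T^m/K_T\,E_T|$ differs from $|K_T^m/K_T|$ by a uniformly bounded factor and may be absorbed into the implicit constant. Using the product decomposition $K_T^m/K_T = \prod_p K_{T,p}^m/K_{T,p}$, nontrivial only at the $i(T)$ bad primes, it suffices to show that for each such $p$ the image of the local reciprocity $r_p\colon (L\otimes_\QQ\QQ_p)^*\to T(\QQ_p)$ covers $K_{T,p}^m/K_{T,p}$ up to a multiplicative loss by a constant $B$ independent of $T$ and $p$.

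This local surjectivity will come from local class field theory for tori: the cokernel of $r_p$ in $T(\QQ_p)$ is controlled by a Galois cohomology group of $X^*(T_{\QQ_p})$, and, by the same finiteness argument used in the proof of Proposition \ref{p3.7}, the $\Gal(\ol{\QQ_p}/\QQ_p)$-module $X^*(T_{\QQ_p})$ takes only finitely many isomorphism classes as $T$ ranges over tori of dimension at most $\dim G$. Hence this cokernel has uniformly bounded cardinality, yielding a uniform $B\in(0,1]$, and multiplying over the $i(T)$ bad primes produces the factor $B^{i(T)}$. The main obstacle will be the global-to-local comparison: one must verify that the global image of $Z$ in $K_T^m/K_T\,E_T$ really factors as a product of its local images, with no further obstruction from the diagonal copy of $T(\QQ)$ beyond the uniformly bounded contribution already absorbed via $E_T$. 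This step should rest on weak approximation for $T$ combined with the uniform control on the units group, and is the delicate arithmetic input linking the local reciprocity images to the single global index appearing in the formula for $|O|/|U|$.
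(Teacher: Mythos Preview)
Your overall architecture matches the paper's: factor $|O|=|U|\cdot[Z\cap T(\QQ)K_T^m:Z\cap T(\QQ)K_T]$, bound the units $E_T=T(\QQ)\cap K_T^m$ uniformly, and reduce to estimating the image of $Z\cap K_T^m$ in $K_T^m/K_T$. The paper does not re-prove this last estimate but quotes it as Lemme~2.18 of \cite{UY}; your cohomological sketch is a plausible route to that lemma. Three points deserve correction.

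First, the step you call the ``main obstacle'' is in fact trivial. Since $r$ is a morphism of $\QQ$-group schemes, on finite ad\`eles it acts componentwise: $Z=\prod'_p r_p((L\otimes\QQ_p)^*)$, hence $Z\cap K_T^m=\prod_p(Z_p\cap K_{T,p}^m)$ and its image in $K_T^m/K_T=\prod_{p\ \mathrm{bad}}K_{T,p}^m/K_{T,p}$ is exactly the product of the local images. No weak approximation or further global input is needed.

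Second, the cohomology group controlling the local cokernel is $H^1(\QQ_p,N)$ with $N=\ker r$, not a cohomology group of $X^*(T_{\QQ_p})$. Your conclusion survives: $[L:\QQ]$ and $\dim T$ are bounded by data depending only on $G$, so $X_*(N^0)$ and $\pi_0(N)$ range over finitely many Galois modules and $|H^1(\QQ_p,N)|$ is uniformly bounded independently of $p$. From $[K_{T,p}^m:Z_p\cap K_{T,p}^m]\le[T(\QQ_p):Z_p]$ one then gets the factor $B^{i(T)}$.

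Third, the opening identification of $O$ with the image of $Z$ in $T(\QQ)\backslash T(\AAA_f)/K_T$ is not formal: it requires that the morphism of Shimura varieties $\Sh_{K_T}(T,\{h\})\to\Sh_K(G,X)$ be injective. The paper reduces to $K$ neat and invokes Lemme~2.2 of \cite{UY}; you should flag this input rather than absorb it into ``by Shimura reciprocity''.
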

\begin{lem}
Soit $N$ le noyau du morphisme naturel
$$
T(\QQ)\backslash T(\AAA_f) /K_T \lto T(\QQ) \backslash T(\AAA_f) /K^m_T
$$
Alors
$$
|N| = c |K^m_T/K_T|
$$
o\`u $c=c(x)$ est une constante uniformement born\'ee quand $x=\overline{(h,1)}$ varie parmi
les points CM de $S$. 
\end{lem}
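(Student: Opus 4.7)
The plan is to identify $N$ explicitly as a quotient of the finite group $K_T^m/K_T$, and then to invoke the CM hypothesis on $x$ to bound the relevant quotient uniformly.

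Since $K_T^m$ is the maximal compact open subgroup of $T(\AAA_f)$, we have $K_T \subseteq K_T^m$, and the projection $T(\QQ)\backslash T(\AAA_f)/K_T \lto T(\QQ)\backslash T(\AAA_f)/K_T^m$ is well-defined. An element $[t]$ of the source lies in $N$ if and only if $t \in T(\QQ) K_T^m$, so the map
$$K_T^m \lto T(\QQ)\backslash T(\AAA_f)/K_T, \qquad k \mapsto T(\QQ) k K_T,$$
surjects onto $N$. Two elements $k, k' \in K_T^m$ have the same image if and only if $k' = \gamma k \kappa$ for some $\gamma \in T(\QQ)$ and $\kappa \in K_T$, in which case $\gamma = k' \kappa^{-1} k^{-1} \in K_T^m$. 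Hence the map factors through the bijection
$$N \;\cong\; K_T^m/(T(\QQ)\cap K_T^m)K_T,$$
giving
$$|N| \;=\; \frac{|K_T^m/K_T|}{[T(\QQ)\cap K_T^m:T(\QQ)\cap K_T]}.$$
I would therefore set $c(x) := [T(\QQ)\cap K_T^m:T(\QQ)\cap K_T]^{-1}$, and the remaining task is to bound this index from above uniformly in $x$.

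For the uniform bound, the CM hypothesis enters essentially. Since $x$ is a CM point, $T$ is anisotropic modulo a fixed central subgroup $Z\subseteq G$, so $(T/Z)(\RR)$ is compact. The image of $T(\QQ)\cap K_T^m$ under $T \to T/Z$ then lies in a discrete subgroup of the compact real group $(T/Z)(\RR)$, so is finite and contained in the torsion subgroup of $(T/Z)(\QQ)$, of order at most $w_{T/Z}$. As $x$ varies among CM points of $S$, one has $\dim T \leq \dim G$, and the integral Galois module $X^*(T)$ belongs to one of finitely many isomorphism classes (finite subgroups of $\GL_d(\ZZ)$ acting on $\ZZ^d$), so $w_{T/Z}$ is uniformly bounded. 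The residual contribution $T(\QQ)\cap Z(\QQ)\cap K_T^m$ lies in the fixed central group $Z$, and its index modulo $T(\QQ)\cap Z(\QQ)\cap K_T$ depends only on $Z$ and the fixed level $K$, hence is bounded as well. Combining these two bounds yields a uniform upper bound on $[T(\QQ)\cap K_T^m:T(\QQ)\cap K_T]$.

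The main subtlety lies in this archimedean reduction. For a general torus, $T(\QQ)\cap K_T^m$ can be infinite, as already happens for $\Res_{F/\QQ}\GG_m$ with $F$ totally real, so one cannot bound it directly. The CM hypothesis provides exactly the compactness of $T(\RR)$ modulo the fixed center that is needed to make the quotient $(T(\QQ)\cap K_T^m)/(T(\QQ)\cap K_T)$ finite and uniformly bounded as $x$ varies among CM points of $S$.
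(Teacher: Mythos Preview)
Your identification $N \cong K_T^m/\bigl((T(\QQ)\cap K_T^m)K_T\bigr)$ is exactly the one the paper obtains. The difference is only in how the factor $T(\QQ)\cap K_T^m$ is controlled. The paper argues more bluntly that $T(\QQ)\cap K_T^m$ itself is finite of uniformly bounded order: it picks a net open subgroup $K_T^{\mathrm{net}}\subset K_T^m$ of uniformly bounded index, asserts that $T(\QQ)\cap K_T^{\mathrm{net}}$ is torsion and hence trivial, and concludes $|T(\QQ)\cap K_T^m|\le [K_T^m:K_T^{\mathrm{net}}]$. Your route, via the archimedean compactness of $(T/Z)(\RR)$ and the uniform bound on $w_{T/Z}$ coming from the finitely many Galois-module types, is a perfectly valid alternative; it has the advantage of making explicit where the CM hypothesis is used (the paper's neatness argument invokes it silently when it claims $T(\QQ)\cap K_T^{\mathrm{net}}$ is torsion, which is false for a general torus but holds here because the units of a CM torus have all archimedean absolute values equal to $1$). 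The paper's argument is shorter and avoids the center bookkeeping; yours is more self-contained and would survive if one did not want to reduce to $G$ adjoint.
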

\begin{proof}
Il est facile de voir que
$$
N = (T(\QQ)\cap K^m_T )\backslash K_T^m /K_T
$$
Le groupe $T(\QQ)\cap K^m_T$ est fini d'ordre born\'e uniform\'ement
quand $x=\overline{(h,1)}$ varie parmi les points CM de $S$.
En effet il existe un sous-groupe compact ouvert net $K_{T}^{net}$ de $T(\AAA_{f})$
d'indice uniform\'ement born\'e dans $K_{T}^m$. Le groupe $K_{T}^{net}\cap T(\QQ)$
est trivial car de torsion dans un sous-groupe  compact ouvert net. On en d\'eduit
que $\vert T(\QQ)\cap K_{T}^{m}\vert \leq \vert K_{T}^{m}/K_{T}^{net}\vert $ donc que 
$T(\QQ)\cap K^m_T$ est fini d'ordre born\'e uniform\'ement.
\end{proof}
\begin{proof}
Sans perte de g\'en\'eralit\'e, on peut supposer que le groupe compact ouvert $K$ est net.

Notons d'abord qu'il suffit de montrer que 
$|\Gal(\ol\QQ / L) \cdot \overline{(h,1)}|$ est au moins de la taille de l'image de $r((\AAA_f\otimes L)^*)$ dans
$T(\QQ)\backslash T(\AAA_f) /K_T$.
En effet, supposons que ce soit le cas. 
On constate alors que 
$$
|\Gal(\ol\QQ / L) \cdot \overline{(h,1)}|\ge |U||\Theta|
$$
o\`u $\Theta$ est l'image de $r((\AAA_f\otimes L)^*)\cap K_T^m$ dans $K_T^m/K_T$.
D'apr\`es \cite{UY}, lemme 2.18 , on a 
$$
|\Theta| \gg B^{i(T)}|K_T^m / K_T|
$$
o\`u $B$ est une constante uniforme et $i(T)$ est comme dans l'\'enonc\'e.

D\'emontrons maintenant que $|\Gal(\ol\QQ / L) \cdot x|$ est au moins de la taille de l'image de $r((\AAA_f\otimes L)^*)$ dans
$T(\QQ)\backslash T(\AAA_f) /K_T$.
L'inclusion de donn\'ees de Shimura $(T,\{h\})\subset (G,X)$ induit un morphisme de vari\'et\'es de
Shimura:
$$
\Sh_{K_T}(T,\{h\})\lto \Sh_K(G,X)
$$
Ce morphisme est d\'efini sur le compos\'e du corps reflexe de $(T,\{h\})$ et celui de $(G,X)$.
De plus, par le lemme 2.2 de \cite{UY}, ce morphisme est 
injectif. 
On en d\'eduit que la taille de l'orbite sous Galois de $x=\ol{(h,1)}$ est, a une constante uniforme pr\`es,
la taille de l'image de  $r((\AAA_f\otimes L)^*)$ dans
$T(\QQ)\backslash T(\AAA_f) /K_T$.

\subsection{Morphisme de r\'eciprocit\'e \`a noyaux connexes}\label{s5.2}

Si $T$ est un tore sur $\QQ$, on note $\pi(T)$ le groupe
$T(\mathbb{A}_{f})/T(\mathbb{Q})^{-} $ (adh\'erence topologique), modifiant un peu la notation
de Deligne. Si $(T,\{h\})$ est une sous-donn\'ee de Shimura de $(G,X)$ telle que 
$T$ est le groupe de Mumford-Tate de $h$, et $r:R_{L}\rightarrow T$
le morphisme de r\'eciprocit\'e on note
$r_{\AAA_{f}/\QQ}: \pi(R_{L})\rightarrow \pi(T)$ le morphisme induit.

 On rappelle que l'on note
$h_{T}=T(\QQ)\backslash T(\AAA_{f})/K_{T}^{m}$ le groupe de classes
de $T$. On dispose alors du r\'esultat suivant (\cite{CU} thm. 3.3).

\begin{teo}
Si $(T,\{h\})$ varie parmi les sous-donn\'ees \textrm{CM} de $(G,X)$ telles que le noyau
\begin{equation}
N=\ker(r:R_{L}\rightarrow T)
\end{equation}
est connexe, le conoyau de ${r_{\AAA_{f}/\QQ}}:\pi(R)\rightarrow \pi(T)$
est de taille uniform\'ement born\'ee.
\end{teo}

On en d\'eduit en particulier que le conoyau de $\overline{r}:h_{R_{L}}\rightarrow h_{T}$
est uniform\'ement born\'e quand $(T,\{h\})$ varie parmi les sous-donn\'ees CM
telles que le noyau de $r$ est connexe. Dans formulation de (\cite{CU} thm 3.3)
le corps reflex de $(T,\{h\})$ \`a la place de $L$. La preuve donn\'ee
dans ce texte vaut pour $L$ \`a la place du corps reflex. Il est simple
de montrer que les \'enonc\'es  du th\'eor\`eme  pour $L$ et pour le corps reflex
sont en fait \'equivalents.

En combinant ce r\'esultat avec la proposition \ref{p5.1}
et le th\'eor\`eme \ref{teo2.3} on obtient un des r\'esultats principaux que nous avons
en vue dans ce texte.

\begin{cor}
Soit $(G,X)$ une donn\'ee de Shimura telle que $G$ est le groupe de 
Mumford-Tate g\'en\'erique sur $X$. Soit $K$ un sous-groupe compact ouvert de 
$G(\AAA_f)$. Soit $d$ le rang absolu de $G$. 

Soit $(T,\{h\})\subset (G,X)$ une sous--donn\'ee de Shimura sp\'eciale et $L$ le corps de
d\'ecomposition de $T$. On suppose que $T$ est le groupe de Mumford-Tate de $h$
et que le noyau du morphisme de r\'eciprocit\'e $r:R_{L}\rightarrow T$
est connexe. Alors pour tout $\epsilon >0$
\begin{equation}\label{eqGal}
|\Gal(\ol\QQ / L) \cdot \overline{(h,1)}| \gg B^{i(T)}|K_T^m/K_T| \cdot |h_{T}|\ge
c(d,\epsilon) C^{i(T)}|K_T^m/K_T| D_{L}^{\frac{\lambda(d)}{2}-\epsilon}.
\end{equation}
o\`u $C$ est une constante ne d\'ependant que de $d$
 et $i(T)$ d\'esigne le nombre de premiers $p$ tels que 
$K_{T,p}^m \neq K_{T,p}$. La constante positive $\lambda(d)$ est 
explicit\'ee dans la d\'efinition \ref{d3.4} et $c(\epsilon,d)$ est une constante
strictement positive ne d\'ependant que de $d$ et de $\epsilon$.
\end{cor}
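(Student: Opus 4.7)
The plan is to assemble three ingredients already established in the paper: Proposition~\ref{p5.1}, the Clozel--Ullmo cokernel bound recalled at the beginning of \S\ref{s5.2}, and Theorem~\ref{teo2.3}. First I apply Proposition~\ref{p5.1} directly to $(T,\{h\})$, which produces
$$
|\Gal(\ol\QQ/L)\cdot \ol{(h,1)}| \gg B^{i(T)}\,|K_T^m/K_T|\cdot |U|,
$$
where $B$ is a universal constant and $U$ is the image of $r((\AAA_f \otimes L)^*)$ in $h_T = T(\QQ)\backslash T(\AAA_f)/K_T^m$.

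Next, to replace $|U|$ by $|h_T|$ up to a constant depending only on $d$, I invoke the connectedness hypothesis on $\ker(r)$ together with the Clozel--Ullmo theorem: under this hypothesis the cokernel of $\ol{r}: h_{R_L} \to h_T$ has cardinality bounded by a constant depending only on the Shimura datum $(G,X)$, hence only on $d$. Therefore $|U| \gg_d |h_T|$, which already yields the first inequality of~(\ref{eqGal}). Since $h$ is a special point and $T$ is its Mumford--Tate group, $T$ is a CM torus, and as remarked after Theorem~\ref{teo2.3} its regulator $R_T$ equals $1$. Applying Theorem~\ref{teo2.3} to $T$ (which has dimension $d$) therefore gives, for every $\epsilon > 0$,
$$
|h_T| = h_T R_T \ge c(d,\epsilon)\,B(d)^{i(L)} D_L^{\lambda(d)/2 - \epsilon}.
$$

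The only remaining point is to absorb the factor $B(d)^{i(L)}$ into $D_L^{-\epsilon}$ so as to obtain the clean form $C^{i(T)}|K_T^m/K_T|\,D_L^{\lambda(d)/2-\epsilon}$ demanded by~(\ref{eqGal}). If $B(d) \ge 1$ this is automatic; otherwise, the quantity $i(L) = \omega(D_L)$, being the number of distinct prime divisors of $D_L$, satisfies the classical sub-logarithmic bound $\omega(D_L) \ll \log D_L / \log\log D_L$. Hence $B(d)^{i(L)} \gg_d D_L^{-\epsilon/2}$ for $D_L$ sufficiently large, the finitely many exceptional values being absorbed into the constant $c(d,\epsilon)$ after renaming $\epsilon/2$ as $\epsilen$. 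Setting $C := B$ then yields~(\ref{eqGal}). The statement is essentially an assembly of prior results, so I do not foresee a serious obstacle; the only subtlety is the benign absorption of $B(d)^{i(L)}$, which rests on the sub-logarithmic growth of $\omega(D_L)$.
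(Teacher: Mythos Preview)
Your proof is correct and follows exactly the assembly the paper itself indicates (``En combinant ce r\'esultat avec la proposition~\ref{p5.1} et le th\'eor\`eme~\ref{teo2.3}''): Proposition~\ref{p5.1} gives the factor $B^{i(T)}|K_T^m/K_T|\cdot|U|$, the Clozel--Ullmo cokernel bound replaces $|U|$ by $|h_T|$ under the connectedness hypothesis, and Theorem~\ref{teo2.3} (with $R_T=1$) supplies the $D_L^{\lambda(d)/2-\epsilon}$ term. Your extra care in absorbing $B(d)^{i(L)}$ via the sub-logarithmic bound on $\omega(D_L)$ is a detail the paper leaves implicit; note only the typo ``$\backslash$epsilen'' in your last paragraph.
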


En utilisant la proposition \ref{p4.7} et le fait que $\lambda(2)=\lambda(3)=\frac{2}{5}$,
on trouve le r\'esultat suivant qui g\'en\'eralise le cas bien connu $g=1$.

\begin{cor}
Soit $\epsilon>0$.
Soit  $x=\overline{(h,1)}$ un point CM du module $\AAA_{g}$ des vari\'et\'es ab\'eliennes
principalement polaris\'ees de dimension $g=2$ ou $g=3$ correspondant
\`a une vari\'et\'e ab\'elienne simple. Soit $T=T_{x}$ le groupe de Mumford-Tate 
de $h$. Alors 
\begin{equation}
|\Gal(\ol\QQ / L) \cdot \overline{(h,1)}\vert \ge
c(\epsilon) C^{i(T)}|K_T^m/K_T| D_{L}^{\frac{1}{5}-\epsilon}.
\end{equation}
pour une constante $c(\epsilon)$ ne d\'ependant pas de $x$.
\end{cor}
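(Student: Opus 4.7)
The plan is to view this corollary as a direct specialization of the previous corollary to the Siegel Shimura datum $(GSp_{2g}, \mathcal{H}_{g})$ for $g \in \{2, 3\}$, once the connectedness hypothesis on the reciprocity kernel has been verified and the exponent $\lambda(d)/2$ has been made explicit.

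First I would invoke Proposition \ref{p4.7}: for any CM point of $\mathcal{A}_g$ with $g \leq 3$, the kernel $N = \ker(r \colon R_L \to T)$ is automatically connected. The simpleness of the abelian variety associated to $x = \overline{(h,1)}$ guarantees that $T = T_x$ is itself the full Mumford-Tate torus of $h$, so the setup of the previous corollary is matched verbatim. Consequently, the previous corollary applies and produces
$$
|\Gal(\overline{\QQ}/L) \cdot \overline{(h,1)}| \gg C^{i(T)}|K_T^m/K_T| \cdot D_L^{\lambda(d)/2 - \epsilon}
$$
with $d$ the absolute rank of $G = GSp_{2g}$, namely $d = g+1 \leq 4$, and with $C = C(d)$ and the implicit constant $c(d,\epsilon)$ depending only on $d$ and $\epsilon$.

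Second I would substitute the correct exponent. Since the Mumford-Tate torus $T$ of a simple CM abelian variety of dimension $g$ embeds in $GU_E$ for $E$ a CM field of degree $2g$, we have $\dim T \leq g+1$, so the relevant values of $\lambda$ lie in the range controlled by the explicit computation recorded after Theorem \ref{teo2.3}: $\lambda(2) = \lambda(3) = \frac{2}{5}$. This gives $\lambda(d)/2 = \frac{1}{5}$ in the range of interest, and substituting into the displayed bound yields the stated inequality.

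There is no substantive new obstacle in this corollary: all the difficulty is absorbed into its three inputs, namely Theorem \ref{teo2.3} (class-number lower bound for tori), the Clozel-Ullmo uniform cokernel estimate of section \ref{s5.2}, and Proposition \ref{p4.7} (automatic connectedness of $\ker r$ for $g \leq 3$). The one point requiring care is that $\lambda$ is not monotone in $d$, so the simplification to the exponent $\frac{1}{5}$ relies on the explicit combinatorial evaluation of Definition \ref{d3.4} rather than on a soft comparison; this is entirely a bookkeeping exercise.
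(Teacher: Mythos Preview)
Your approach matches the paper's one-line justification: invoke Proposition~\ref{p4.7} for connectedness of $\ker r$ when $g\le 3$, feed this into the preceding corollary, and read off the exponent via the explicit values of $\lambda$ recorded after Theorem~\ref{teo2.3}.

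There is one bookkeeping slip that matters for $g=3$. You take $d=g+1$, the absolute rank of $\GSp_{2g}$, correctly note $\dim T\le g+1$, and then invoke $\lambda(2)=\lambda(3)=\tfrac{2}{5}$. These do not cohere: with $d=4$ the preceding corollary gives exponent $\lambda(4)/2=\tfrac{2}{11}$, not $\tfrac{1}{5}$. The paper's justification explicitly cites $\lambda(2)$ and $\lambda(3)$, i.e.\ $\lambda(g)$ rather than $\lambda(g+1)$; this is consistent with the reduction to the adjoint group announced in the introduction (``on peut sans perte de g\'en\'eralit\'e supposer que $G$ est semisimple de type adjoint''), under which the relevant Mumford--Tate torus has dimension $g$. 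Your argument goes through once you make that reduction explicit; without it, the exponent $\tfrac{1}{5}$ for $g=3$ is not justified by what you wrote. Incidentally, $\lambda$ \emph{is} monotone non-increasing---directly from Definition~\ref{d3.4}, since $\psi$ is non-decreasing and $\lambda(s)$ is a minimum over $\{n\le\psi(s)\}$---so your closing caveat about non-monotonicity is misplaced, though harmless.
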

 
 Pour $g=1$, on a $\lambda(1)=1$ et on retrouve les estimations classiques
 (en $D_{L}^{\frac{1}{2}-\epsilon}$)
 de la taille du groupe de Picard d'un corps quadratique imaginaire.
 Pour obtenir des minorations de l'orbite sous Galois d'un point $CM$
de $\AAA_{g}$ on peut en g\'en\'eral se ramener au cas des vari\'et\'es
ab\'eliennes simples. 
 
Soit $(G,X)$ une donn\'ee de Shimura et $(T,\{h\})\subset (G,X)$ une
 sous--donn\'ee de Shimura sp\'eciale. On dit que $T$ est Galois
 g\'en\'erique si l'image $I$ de $\Gal(\oQ/\QQ)$ dans $\Aut(X^{*}(T))$ est
 maximale (parmi les images possibles pour un sous-tore de $G$). Nous 
 faisons r\'ef\'erence \`a  (\cite{CU} 2.1)  pour une d\'efinition plus pr\'ecise.
 Notons que d'apr\`es la proposition 2.1 de \cite{CU} les 
 sous-donn\'ees sp\'eciales Galois g\'en\'erique de $(G,X)$ existent pour tout $(G,X)$,
 elles sont m\^{e}me g\'en\'eriques en un sens expliqu\'e dans \cite{CU}.
  Il est montr\'e dans \cite{CU} dans de nombreux
 cas que le morphisme de r\'eciprocit\'e est \`a noyau connexe pour
 les sous-donn\'ees de Shimura $(T,\{h\})\subset (G,X)$ avec
 $T$ Galois g\'en\'erique. C'est par exemple le cas si $G$ est $\QQ$-simple adjoint
 de type $B_{l}$, $C_{l}$ et dans certains cas de type $D_{l}$ et $A_{l}$ que le lecteur
 pourra consulter dans
 \cite{CU}.  Dans tous ces cas si on fait varier le point sp\'ecial $x$
 parmi des sous-donn\'ees Galois g\'en\'eriques on obtient des 
 minorations inconditionnelles pour la taille de l' orbite sous Galois de $x$
 de la forme donn\'ee dans l'\'equation (\ref{eqGal}).
 Nous n'\'ecrirons pas l'\'enonc\'e le plus g\'en\'eral possible. Retenons seulement
 le r\'esultat suivant qui concerne $\AAA_{g}$ qui est une cons\'equence
 des r\'esultats pr\'ec\'edents et de la proposition \ref{p4.3}.
 
\begin{cor}
Soit $g$ un entier.
Soit $A$ une vari\'et\'e ab\'elienne principalement polaris\'ee de dimention $g$. On suppose 
que $\End(A)\otimes \QQ=E$ est un corps CM de degr\'e $2g$ v\'erifiant les hypoth\`eses
de la proposition \ref{p4.3}.
Soit $x$ le point sp\'ecial de $\AAA_{g}$ associ\'e \`a $A$. On dira que
$x$ est ``suffisamment Galois g\'en\'erique''.
Noter que si 
$x$ est Galois g\'en\'erique $A$ a la propri\'et\'e requise.  En conservant
les notations des \'enonc\'es pr\'ec\'edents, quand $x$ varie parmi
les points sp\'eciaux suffisamment Galois g\'en\'erique

\begin{equation}\label{eqGal2}
|\Gal(\ol\QQ / L) \cdot \overline{(h,1)}| \ge
c(d,\epsilon) C^{i(T)}|K_T^m/K_T| D_{L}^{\frac{\lambda(d)}{2}-\epsilon}.
\end{equation}

\end{cor}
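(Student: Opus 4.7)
Le plan est d'invoquer le corollaire contenant la minoration (\ref{eqGal}), qui s'applique d\`es que le noyau du morphisme de r\'eciprocit\'e $r : R_L \to T$ est connexe. L'essentiel du travail consiste donc \`a v\'erifier cette connexit\'e sous l'hypoth\`ese ``suffisamment Galois g\'en\'erique'' et \`a sp\'ecialiser \`a la donn\'ee $(\GSp_{2g}, \Hcal_g^{\pm})$.

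Premi\`erement, je rappellerais que le tore $T$ consid\'er\'e est le groupe de Mumford-Tate $MT(\mu) \subseteq GU_E$ associ\'e au type CM de $x$. Par hypoth\`ese, $E$ v\'erifie les hypoth\`eses de la proposition \ref{p4.3}, c'est-\`a-dire $v = g$ dans la suite exacte de la proposition \ref{p4.1}. La proposition \ref{p4.3} donne alors imm\'ediatement $L_\mu = X_{*}(GU_E)$, d'o\`u $T = GU_E$ (de dimension $g+1$) et la connexit\'e du noyau de $r$.

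Deuxi\`emement, je justifierais la remarque interm\'ediaire de l'\'enonc\'e selon laquelle tout point $x$ Galois g\'en\'erique au sens de \cite{CU} satisfait automatiquement cette condition. Dans ce cas, le groupe $\Fg$ co\"\i ncide avec $C_g = (\ZZ/2\ZZ)^g \rtimes S_g$ tout entier; le noyau $(\ZZ/2\ZZ)^v$ de la projection $\Fg \to \Fg_0 = S_g$ est donc \'egal \`a $(\ZZ/2\ZZ)^g$, c'est-\`a-dire $v = g$, et l'hypoth\`ese de la proposition \ref{p4.3} est satisfaite.

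Finalement, j'appliquerais le corollaire contenant (\ref{eqGal}) \`a la donn\'ee de Shimura $(\GSp_{2g}, \Hcal_g^{\pm})$ avec $d = g+1$ le rang absolu de $\GSp_{2g}$. Comme le r\'egulateur $R_T$ vaut $1$ pour un tore de multiplication complexe, le th\'eor\`eme \ref{teo2.3} fournit bien une minoration de $h_T$ seul par $D_L^{\lambda(d)/2 - \epsilon}$. Il n'y a pas d'obstacle s\'erieux ici: le r\'esultat s'obtient en combinant directement la proposition \ref{p4.3} avec le corollaire pr\'ec\'edent.
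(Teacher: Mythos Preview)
Your proposal is correct and follows exactly the route the paper indicates: the corollary is stated as ``une cons\'equence des r\'esultats pr\'ec\'edents et de la proposition \ref{p4.3}'', and you unpack precisely this, invoking the corollary (\ref{eqGal}) once Proposition~\ref{p4.3} has supplied the connectivity of $\ker r$. Your added remarks (that $T=GU_E$ has dimension $g+1$, and that Galois generic forces $\Fg=C_g$ hence $v=g$) are accurate details the paper leaves implicit.
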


\end{proof}

\section{Bornes pour le noyau de r\'eciprocit\'e sous GRH.}\label{s6}

Dans cette section on am\'eliore les bornes pour les orbites de Galois
de points sp\'eciaux donn\'ees dans \cite{Yaf}.

\begin{teo} \label{borne_torsion}
Admettons l'hypoth\`ese de Riemann g\'en\'eralis\'ee pour les corps CM.
Soit $(G,X)$ une donn\'ee de Shimura et $x={(h,1)}$ un point CM de $Sh_K(G,X)$.
Soit $T$ le groupe de Mumford-Tate de $h$.
 
Soit $L$ le corps de d\'ecomposition de $T$ et  $D_L$ 
la valeur absolue du discriminant de $L$.

On a
$$
\Gal(\ol \QQ/F)\cdot \ol{(x,1)} \gg B^{i(T)}|K_T^m/K_T| D_L^{\mu}
$$
o\`u $\mu>0$ est uniforme.
\end{teo}
%\begin{rem}
%Ce th\'eor\`eme montre en particulier que pour tout entier $n$, il existe
%un $\epsilon=\epsilon(n) > 0$, tel que 
%$$
%|G[n]| \leq (1 - \epsilon)|G|
%$$
%\end{rem}
\begin{proof}
L'apparition du facteur $B^{i(T)}|K_T^m/K_T|$ a \'et\'e trait\'ee pr\'ecedemment (c.f. \ref{p5.1}).

Soit, comme avant, $r \colon R_L \lto T$ le morphisme de r\'eciprocit\'e et $\ol{r}\colon h_L \lto h_T$ le morphisme induit par $r$ au niveau des groupes de
classes de $R_L$ et de $T$ respectivement.
Il suffit de montrer que
$$
|Im(\ol{r})| \gg D_L^{\mu}
$$

La preuve s'inspire de \cite{AD} et de \cite{Yaf}.

On d\'efinit, suivant \cite{AD}, pour un groupe ab\'elien $H$ et un entier $l$,
$M_H(l)$ comme \'etant le plus petit entier $A$ tel que
pour tout $l$-uplet $(g_1,\dots , g_l)$ d'\'el\'ements de $H$, il existe $(a_1,\dots, a_l)\in \ZZ^n\backslash \{0\}$
avec $\sum_j |a_j|\leq A$ v\'erifiant $g_1^{a_1}\cdots g_l^{a_l}=1$.

Prenons $l=|H|$ et soit $g_1,\dots , g_l \in H$.
Si $g_i=1$ pour un certain $i$, alors on a  une relation
multiplicative non-triviale en les $g_{i}$ avec $A=1$.
Autrement, on a une relation de la forme $g_i g_j^{-1} = 1$ pour
des indices $i\neq j$.
Dans tous les cas, on a une relation $g_1^{a_1}\cdots g_l^{a_l} = 1$ avec
$\sum |a_i| \leq 2$.
On voit donc que pour $l = |H|$, on a 
$$
M_H(l) \leq 2.
$$

Prenons maintenant $H=h_L/\ker(\overline{r})$.

On va d\'emontrer l'estimation suivante:
\begin{equation}\label{Eq}
M_H(l) > c \frac{\log(D_L)}{\log(l)+\log\log(D_L)}
\end{equation}
o\`u $c>0$ est une constante uniforme.

Cette estimation implique celle d\'esir\'ee pour 
$|H|$:

$$
|H| > \frac{D_L^{c/2}}{\log(D_L)} \gg D_L^{\mu}
$$
avec $\mu>0$ uniforme.

On va maintenant d\'emontrer l'in\'egalit\'e (\ref{Eq}).
Rappelons quelques notions et r\'esultats de la section 2 de \cite{Yaf}. 
D'apr\`es la proposition 2.2 de \cite{EY} on peut supposer $G$ adjoint.
Le morphisme de r\'eciprocit\'e $r\colon R_L \lto T$ induit 
une inclusion $X^*(T)\subset X^*(R_L)$ et on a une base canonique 
de $X^*(R_L)$ donn\'ee par \'enum\'eration des \'el\'ements de $\Gal(L/\QQ)$.
Il existe une base   $\cB$ de $X^*(T)$ 
 telle que les coordonn\'ees des caract\`eres $\chi$ de $\cB$
par rapport \`a la base canonique de $X^*(R_L)$, sont born\'ees uniform\'ement.
De plus comme $G$ est suppos\'e adjoint
pour tout caract\`ere $\chi$ de $\cB$, $\chi\ol{\chi}$ est le caract\`ere trivial.

Soit $l\geq 1$ un entier et $p_1,\dots , p_l$, $l$ premiers qui d\'ecomposent $T$ et 
$a_1,\dots , a_l$ des entiers relatifs.
Pour chaque $i$, on fixe une place $v_i$ de $L$ au d\'essus de $p_i$ et un id\`ele $P_i$ 
dans $(L\otimes\AAA_f)^*$ qui est l'uniformisante \`a la place $v_i$ et $1$ ailleurs.
 Considerons $I = P_1^{a_1}\cdots P_r^{a_l} \subset (L\otimes\AAA_f)^*$ et sa classe $\ol{I}$
 dans $h_L$.
Supposons que $\overline{I}$ soit dans le noyau de $\overline{r}$ i.e 
$$
r(I) = \pi k
$$
o\`u $\pi\in T(\QQ)$ et $k\in K^m_T$.
Soit $\pi_i = \chi_i(\pi)\subset L^*$.
Le lemme 2.15 de \cite{Yaf} montre que $\QQ[\pi_1,\dots , \pi_r]=L$.

Soit $t$ une borne uniforme sur les coordonn\'ees des $\chi_i$.
On voit alors que $\pi'_i:=(p_1^{|a_1|}\cdots p_l^{|a_l|})^t \pi_i \in O_L$
et le fait que $\chi_i \ol{\chi_i}$ est le caract\`ere trivial implique que
$$
|\sigma(\pi'_i)| \leq (p_1^{|a_1|}\cdots p_l^{|a_l|})^{2t}
$$
pour tout $\sigma \in \Gal(L/\QQ)$.

Soit $n_{L}$ le degr\'e de l'extension $L$ sur $\QQ$. Comme $L$ est le corps de d\'ecomposition
d'un tore de dimension $d$ fix\'e, $n_{L}$ est uniform\'ement born\'e.
On peut choisir une base $b_1,\cdots, b_{n_L}$ de $L$ sur $\QQ$
avec  $b_{k}=\prod_{i=1}^{d} {\pi'_i}^{n_{i,k}}$ pour des entiers naturels $n_{i,k}$
tels que $n_{i,k}\leq n_{L}$. 
Il suffit de remarquer en effet 
que pour tout $i$, les \'el\'ements $1, \pi'_i, \dots , {\pi'_i}^{n_L}$ sont lin\'eairement d\'ependants.

Le fait que les $b_i$ sont dans $O_L$ et 
que $b_1,\cdots, b_{n_L}$ forment une base de $L$ sur $\QQ$, implique que
 $\ZZ[b_1,\dots , b_{n_L}]$ est un ordre dans $O_L$.
En particulier
\begin{equation}\label{Eq2}
|\Discr (\ZZ[b_1,\dots , b_{n_L}])| \geq D_L
\end{equation}
D'autre part, $|\Discr (\ZZ[b_1,\dots , b_{n_L}])|$ est le d\'eterminant de la matrice
$\Big{(}Tr_{L/\QQ}(b_ib_j)\Big{)}$.
Par l'in\'egalit\'e d'Hadamard, si $b$ est un majorant de tous les 
$|Tr_{L/\QQ}(b_ib_j)|$, alors
$$
|\Discr (\ZZ[b_1,\dots , b_{n_L}])| \leq c(n_L) b^{n_L}
$$
o\`u $c(n_L)$ ne d\'epend que de $n_L$ (on peut prendre $c(n_L) = n_L^{n_L}$).

Du fait que $|\sigma(\pi'_i)| \leq (p_1^{|a_1|}\cdots p_l^{|a_l|})^{2A}$, on d\'eduit qu'il existe un entier uniforme $D$ (ne d\'ependant que de $n_L$) tel que  
$$
|Tr_{L/\QQ}(b_ib_j)|\leq (p_1^{|a_1|}\cdots p_l^{|a_l|})^D
$$

et donc, par l'in\'egalit\'e d'Hadamard, apr\`es avoir remplace $D$ par $D n_L$
$$
|\Discr \ZZ[b_1,\dots , b_{n_L}]| \leq  c(n_L)(p_1^{|a_1|}\cdots p_l^{|a_l|})^{D}
$$

Notons $c = 1/c(n_L)$.
L'\'equation (\ref{Eq2}) donne alors:
$$
(p_1^{|a_1|}\cdots p_l^{|a_l|})^D \geq c D_L
$$

Nous allons maintenant choisir $l$ et $p_i$.

Rappelons une cons\'equence  du th\'eor\`eme de Chebotarev effectif. Le lecteur pourra consulter
le Lemme 2.1 de \cite{AD} pour une preuve.
\begin{teo} Admettons  l'hypoth\`ese de Riemann g\'en\'eralis\'ee.
On note $\pi_{L}(x)$ le nombre de premiers $p$ totalement decompos\'es dans $L$ tels que $p\leq x$.
Il existe des constantes absolues (et effectivement calculables) $c_1>0$ et $c_2>0$  tels que 
 $$
 \pi_{L}(x) \geq c_2 \frac{x}{\log(x)}
 $$
 pour tout 
 $x\geq c_1 \log(D_L)^2 (\log\log D_L)^4$.
\end{teo}

Soit maintenant $l\geq 1$ un premier.
$$
x = c_3 l \log(l) + c_1 \log(D_L)^2 (\log\log D_L)^4
$$
o\`u $c_3$ est une constante uniforme que nous allons 
expliciter.
Nous voulons choisir la constante $c_3$ telle que $c_2 \frac{x}{\log(x)} \geq l$.
Notons que (si $x\ge e$)
$$
\frac{x}{\log(x)}\geq \frac{c_3l\log(l)}{\log(c_3) + 2\log(l)}.
$$
On a alors $c_2 \frac{x}{\log(x)} \geq l$ d\`es que $\frac{c_2c_3}{\log(c_3) + 2}\geq 1$.
On peut alors par exemple prendre $c_3 = \max{(e,\frac{4}{{c_2}^{2}})}$.

On peut trouver $p_1,\dots , p_l$ d\'ecompos\'es dans $L$ et v\'erifiant
$$
p_i \leq x.
$$

Soit maintenant $I$ un \'el\'ement  comme avant.
On note $A=\sum_{i=1}^{l} \vert a_{i}\vert$. Ce qui pr\'ec\`ede donne
$$
AD \log(x) \geq \log(cD_L)
$$
Par ailleurs, il y a une constante uniforme $c_5$ telle que
$$
\log(x) \leq c_5(\log(l) + \log\log(D_L))
$$
On obtient donc une borne inf\'erieure pour $A$ de la forme souhait\'ee.
Ceci ach\`eve la preuve de l'in\'egalit\'e (\ref{Eq}) et du th\'eor\`eme.

%
%\begin{lem}
%Considerons $k$ premiers d\'ecompos\'es dans $L$:
%$p_1,\dots , p_k$ et les \'el\'ements $P_1,\dots , P_k$ de $T(\AAA_f)$ comme si d\'essus.
%Supposons que $P_1^{a_1}\cdots P_k^{a_k} = \pi k$ o\`u $\pi\in T(\QQ)$ et $k\in K_T^m$.
%Pour tout $i$, soit $\alpha_i = \chi_i(\pi) /\ol{\chi_i(\alpha)} \in L$
%Alors
%$$
%d h(\alpha_i) = \sum_{j=1}^k |a_j| \log(p_i)
%$$
%o\`u $h$ est la hauteur additive usuelle.
%\end{lem}
%\begin{proof}
%
%\end{proof}

%On a $T(\QQ_p) = X_*(T)\otimes \QQ_p^*$.

\end{proof}

\end{document}